\newcommand{\notarrow}{\ensuremath{\vartimes}}
\newcommand{\Longdoesnotimply}{\ensuremath{\mathbin{=\!\!=\!\!\!\raisebox{-.1em}{\notarrow}}}}
\newcommand{\Doesnotimply}{\ensuremath{\mathbin{=\!\!\!\raisebox{-.1em}{\notarrow}}}}
\tikzset{notsrc/.style={
        decoration={markings,
            mark= at position 0 with {
                \node[transform shape] (tempnode) {\notarrow};
            }
        },
        postaction={decorate}
    }
}
\tikzset{notdst/.style={
        decoration={markings,
            mark= at position 1 with {
                \node[transform shape] (tempnode) {\notarrow};
            }
        },
        postaction={decorate}
    }
}
\newcommand{\txt}{\mathrm} 
\newcommand{\txtt}{\textrm} 
\renewcommand{\th}{\ensuremath{^\textrm{\scriptsize th}}}
\newcommand{\st}{\ensuremath{^\textrm{\scriptsize st}}}
\newcommand{\setF}{\mathbb{F}}
\newcommand{\setR}{\mathbb{R}}
\newcommand{\setRppos}{\mathbb{R}_{> 0}}
\newcommand{\setC}{\mathbb{C}}
\newcommand{\setZ}{\mathbb{Z}}
\newcommand{\setN}{\mathbb{N}}
\newcommand{\setNz}{\mathbb{N}_0}
\newcommand{\scf}[1]{\underline{#1}} 
\renewcommand{\subset}{\subseteq}
\newcommand{\iso}{\cong}
\newcommand{\isomapsto}{\stackrel{\sim}{\longmapsto}}
\renewcommand{\mapsfrom}{\mathrel{\reflectbox{\ensuremath{\mapsto}}}}
\newcommand{\colim}{\txt{colim}} 
\newcommand{\diag}{\txt{diag}}
\newcommand{\diagprod}{\mathbin{\times_\diag}} 
\newcommand{\pr}{\txt{pr}} 
\newcommand{\projvect}{\txt{P}} 
\newcommand{\ev}{\txt{ev}} 
\newcommand{\sphere}{\txt{S}} 
\newcommand{\scale}{\txt{sc}} 
\newcommand{\conj}[1]{\overline{#1}}
\newcommand{\ind}{\mathbbm{1}} 
\newcommand{\Sp}[1]{\txt{Sp}{#1}} 
\newcommand{\e}{\txt{e}} 
\newcommand{\Tg}{\txt{T}} 
\newcommand{\B}{\txt{B}} 
\renewcommand{\i}{\txt{i}} 
\newcommand{\D}{\txt{D}} 
\renewcommand{\d}{\txt{d}} 
\renewcommand{\Re}{\txt{Re}} 
\renewcommand{\Im}{\txt{Im}}
\newcommand{\id}{\txt{id}} 
\newcommand{\eqdef}{:=}
\newcommand{\restr}[1]{ |_{#1} } 
\theoremstyle{plain}                    
\newtheorem{theorem}{Theorem}[section]
\newtheorem{proposition}[theorem]{Proposition}
\newtheorem{lemma}[theorem]{Lemma}
\newtheorem{corollary}[theorem]{Corollary}
\theoremstyle{definition}
\newtheorem{definition}[theorem]{Definition}
\newtheorem{example}[theorem]{Example}
\theoremstyle{remark}
\newtheorem{remark}[theorem]{Remark}
\title{Hamiltonian Partial Differential Equations and Symplectic Scale Manifolds}
\author{João Bernardo Crespo}
\email{scrovy@yahoo.com}
\author{Oliver Fabert}
\email{oliver.fabert@gmail.com}
\address{Department of Mathematics, VU Amsterdam, The Netherlands.}
\begin{document}

\begin{abstract}
This paper defines symplectic scale manifolds based on Hofer-Wysocki-Zehnder's scale calculus. We introduce Hamiltonian vector fields and flows on these by narrowing down sc-smoothness to what we denote by \emph{strong} sc-smoothness, a concept which effectively formalizes the desired smoothness properties for Hamiltonian functions. We show the concept to be invariant under sc-smooth symplectomorphisms, whence it is compatible with Hofer's scale manifolds. We develop and verify the theory at the hand of the free Schrödinger equation.
\end{abstract}

\maketitle
\tableofcontents
\markboth{J.B.~Crespo \and O.~Fabert}{Hamiltonian Partial Differential Equations and Symplectic Scale Manifolds} 

\section{Introduction} \label{sec:intro}

Hamiltonian partial differential equations (PDEs) have received increasing attention in the last forty years. A select number of examples include the Schrödinger, Korteweg\hyp{}de~Vries and the Boussinesq equations. These PDEs are intrinsically linked to infinite\hyp{}dimensional symplectic geometry: their evolution is typically analysed on an infinite\hyp{}dimensional space, and their solutions can heuristically be expressed as integral curves of a vector field obtained by means of a Hamiltonian function and a symplectic structure. In fact, the link between Hamiltonian PDEs and infinite\hyp{}dimensional symplectic geometry is, in some sense, akin to the one between the well-known Hamilton's equations
\begin{empheq}[left = \empheqlbrace, right = \mbox{$\,,~k = 1,2,\ldots,d\,.$}]{align} \label{eq:hamilton_n_particles}
	\dot q_k &= \frac{\partial h}{\partial p_k} \nonumber \\
	\dot p_k &= -\frac{\partial h}{\partial q_k}
\end{empheq}
of classical mechanics and \emph{finite}\hyp{}dimensional symplectic geometry.

\subsection{Finite-dimensional Symplectic Geometry} \label{sec:finite_dim}
Before delving in\-to the expectedly more involved case of infinite dimensions, we start by reviewing finite\hyp{}dimensional symplectic geometry. A real vector space $V$ of dimension $2d$ is said to be symplectic whenever adjoined with a bilinear skew\hyp{}symmetric form $\omega: V \times V \mapsto \setR$ which, similarly to an inner product, identifies $V$ with its dual by means of the isomorphism of vector spaces $\iota_{\omega}: V \isomapsto V^*, \, v \mapsto \omega(\cdot, v)$. The canonical example to have in mind is the coordinate space $\setR^{2d} = \setC^{d}$ with its standard symplectic form
\begin{equation} \label{eq:standard_symplectic_form}
	\omega(v,w) = \langle \i v, w \rangle \,,
\end{equation}
where $\i: \setC^{d} \mapsto \setC^{d}, \,v \mapsto \i v$ is its standard complex structure and $\langle \cdot, \cdot \rangle$ its standard real inner product.

More generally, a smooth manifold $M$ of dimension $2d$ is said to be symplectic whenever a maximal smooth atlas is available with symplectomorphisms as transition maps. This means that for each pair of coordinate charts $\phi: U_\phi \subset M \isomapsto V_\phi \subset \setR^{2d}$ and $\psi: U_\psi \isomapsto V_\psi$, we require that the derivative $T_{\phi,\psi}(x) \eqdef \d_x (\psi \phi^{-1}): \setR^{2d} \isomapsto \setR^{2d}$ preserves the standard symplectic form for all $x \in \phi(U_\phi \cap U_\psi)$, in the sense that $\omega(T_{\phi,\psi}(x)\cdot v, T_{\phi,\psi}(x)\cdot w) = \omega(v,w)$ for all $v,w\in\setR^{2d}$.  An equivalent definition due to Darboux \cite[Theorem~3.15]{mcduff98} is that of a manifold $M$ together with a closed two-form which is comprised of a symplectic form $\omega_p: \Tg_p M \times \Tg_p M \mapsto \setR$ on each tangent space, $p\in M$. Naturally, a symplectic vector space is also a symplectic manifold with its symplectic form at every point. The study of symplectic manifolds is the topic of symplectic geometry, and a sound introduction may be found in \cite{mcduff98}.

The link between Hamilton's equations and finite\hyp{}dimensional symplectic geometry is settled down in the following. Any smooth function $h: M \mapsto \setR$ on a symplectic manifold $M$ gives rise to a Hamiltonian vector field $V_h: M \mapsto \Tg M$ uniquely defined by the relation
\begin{equation} \label{eq:symplectic_gradient_finitedim_mfld}
	-\d_p h = \omega_p(\cdot, V_h(p)) \in \Tg_p^* M ~~\txtt{for all}~p\in M \,,
\end{equation}
or compactly written using the isomorphism of vector bundles $\iota_\omega: \Tg M \isomapsto \Tg^* M$ 
induced by the symplectic structure of $M$,
\begin{equation} \label{eq:symplectic_gradient2_finitedim_mfld}
	-\d h = \iota_\omega \circ V_h: M \mapsto \Tg^* M \,.
\end{equation}
The flow $\varphi_h$ of the vector field $V_h$ is then itself said to be Hamiltonian, and both are said to be generated by $h$. Returning to our canonical example $M = \setR^{2d}$ with its standard form, simple computations show that the integral curves of the Hamiltonian vector field generated by a smooth Hamiltonian function $h: \setR^{2d} \mapsto \setR$ are precisely the solutions of Hamilton's equations \eqref{eq:hamilton_n_particles} if we label the coordinates of $\setR^{2d} = \setR^{d} \oplus \setR^{d}$ as $x = (q,p)$. Accordingly, for a general manifold, the integral curves of $V_h$ are locally given as solutions of Hamilton's equations as well.

Though at first sight this geometrical approach might seem an unnecessary mathematical artefact to study these equations of classical mechanics, symplectic manifolds arise naturally in the study of Hamiltonian functions with symmetries. Consider, for instance, the smooth action of the circle $S^1 = \{\lambda \in \setC: |\lambda| = 1\}$ on $\setC^d$ by pointwise multiplication and an $S^1$\hyp{}invariant Hamiltonian function $h: \setC^d \mapsto \setR$, such as the one used in a finite\hyp{}dimensional quantum mechanical system \cite{ashtekar97}. One can check that the fundamental vector field of the action is generated by the $S^1$\hyp{}invariant Hamiltonian function (momentum map) $\mu: \setC^d \mapsto \setR$
\begin{equation}
	\mu(x) = \frac{1}{2}(1-\|x\|^2)
\end{equation}
and that the action is free on $\mu^{-1}(0)$. By the Marsden\hyp{}Weinstein symplectic reduction theorem \cite[Theorem~4.3.1]{weinstein87}, one obtains a natural symplectic manifold structure on
\begin{equation}
	\setC\projvect{}^{d-1} \eqdef \faktor{\setC^d \setminus \{0\}}{\setC^*} \iso \faktor{\mu^{-1}(0)}{S^1} \,.
\end{equation}
Furthermore, the Hamiltonian function $h$ and flow $\varphi_h$ descend to functions $\bar h$ and $\bar \varphi_h$ on the quotient, respectively, and $\bar \varphi_h$ is precisely the Hamiltonian flow generated by $\bar h$ \cite[Theorem~4.3.5]{weinstein87}. One can then study the reduced system on the lower\hyp{}dimensional manifold $\setC\projvect{}^{d-1}$ and recover the original dynamics therefrom \cite[pp.~304--305]{weinstein87}.

\subsection{Hamiltonian Partial Differential Equations}
Similarly to the finite\hyp{}dimensional case, the solutions of Hamiltonian PDEs can be considered to be integral curves of a Hamiltonian vector field, but infinite\hyp{}dimensional symplectic vector spaces are needed instead. As an example, take the free Schrödinger equation on the circle
\begin{equation} \label{eq:free_schrodinger_intro}
	\i u_t = - \Delta u
\end{equation}
for an unknown wave function $u: \setR \times S^1 \mapsto \setC, \, (t,x) \mapsto u(t,x)$, where $\Delta u = u_{xx}$ is the Laplacian operator. To study this equation, introduce the Hilbert space of square\hyp{}integrable functions on the circle
\begin{equation}
		L^2 \eqdef L^2(S^1, \setC) = \Big\{u: S^1 \mapsto \setC: u~\text{measurable and}~\int_{S^1} |u|^2 < \infty \Big\}
\end{equation}
and its standard symplectic form $\omega: L^2 \times L^2 \mapsto \setR$ given by \eqref{eq:standard_symplectic_form}, where now the real inner product and complex structure are the ones of $L^2$.

As in Section~\ref{sec:finite_dim}, we try and define Hamiltonian vector fields for each Hamiltonian map $h$ by requiring
\begin{equation} \label{eq:omega_gradient_hilbert_intro}
	- \d h(u) = \omega(\cdot, V_h(u))
\end{equation}
for an adequate set of functions $\{u\}$. With this idea in mind, we observe that the solutions of \eqref{eq:free_schrodinger_intro} are integral curves of the Hamiltonian vector field generated by the Hamiltonian function
\begin{equation} \label{eq:hamiltonian_freeschrodinger_intro}
	h(u) = \frac{1}{2} \int_{S^1} |u_x(a)|^2 \,\d a \,.
\end{equation}
Indeed, integration by parts shows that $\d h(u) \cdot v = \langle u_x, v_x \rangle = - \langle u_{xx}, v \rangle$, whence $V_h(u) = \i u_{xx}$. The corresponding Hamiltonian flow is
\begin{equation} \label{eq:flow_freeschrodinger_intro}
	\varphi_h(t, u) = \e^{\i t \Delta} u \,.
\end{equation}
Further examples and ellaboration on Hamiltonian PDEs can be found in \cite{kuksin00,craig08,bourgain10,abbondandolo14,miller06,bona88}.

Although the presented setup seems plausible, it shows a crucial difference with respect to the finite dimensional case. If we inspect the proposed mathematical objects, we see that the Hamiltonian function \eqref{eq:hamiltonian_freeschrodinger_intro} cannot be defined on the entire space $L^2$, but only on the dense subset $W^{1,2} = W^{1,2}(S^1,\setC)$ of weakly differentiable functions with $L^2$ derivative. Similarly, the vector field $V_h$ is only densely\hyp{}defined and two derivatives are needed. Rather in contrast to this, the Hamiltonian flow $\varphi_h$ defines a map $\setR \times L^2 \mapsto L^2$. We thus recognize that several vector spaces are needed for defining the different objects at stake.

This nuance was elegantly solved by Kuksin \cite{kuksin00}, who used Hilbert scales to frame Hamiltonian PDEs. A Hilbert (Banach) scale is a filtration of Hilbert (Banach) spaces which are densely and compactly embedded into each other. From $L^2$, we can build the Levi\hyp{}Sobolev Hilbert scale $\{W^{k,2}\}_{k\in\setZ}$ with $W^{0,2} = L^2$ and extend the real inner product of $L^2$, hence also $\omega$, to a non\hyp{}degenerate pairing $W^{k,2} \times W^{-k,2} \mapsto \setR$. Since \eqref{eq:hamiltonian_freeschrodinger_intro} defines a (Frèchet) smooth map $h: W^{1,2} \mapsto \setR$, the usual $\omega$-gradient relation \eqref{eq:omega_gradient_hilbert_intro} produces a vector field $V_h: W^{1,2} \mapsto W^{-1,2}$ which is simply $\i \Delta$ --- Kuksin's framework involving Hilbert scales delivers the expected results.

Moving one step further, suppose that we are only interested in nonzero wave functions of Schrödinger's equation up to a nonzero complex scalar. This is the case of interest in physics, where the equivalence classes in the projective Hilbert space
\begin{equation}
	\projvect(L^2) \eqdef \faktor{L^2 \setminus \{0\}}{\setC^*}
\end{equation}
represent the state of the quantum\hyp{}mechanical system \cite{ashtekar97,fabert17} (again, $\setC$ acts on $L^2$ by pointwise multiplication). To describe such a system we desire, by analogy with the finite\hyp{}dimensional case, to have some local symplectic scale structure on $\projvect(L^2)$ where we can make sense of basic symplectic geometry as in Section~\ref{sec:finite_dim}.

In our path towards this aim, part of the work by Hofer, Wysocki and Zehnder on polyfolds is essential \cite{hofer06,hofer08_polyfolds,hofer10,hofer17}. Departing from a Banach scale, they develop the notion of scale calculus, which allows the derivative of a function between scales to be defined only on a dense subspace of higher regularity. Subsequently, they extend scale differentiation inductively to scale smoothness, where an arbitrary number of scale derivatives may be taken. In a similar way as in classical differential geometry \cite{lang02},\cite[Chapter~73]{zeidler97}, the authors then proceed to introduce smooth scale manifolds (also notated $\scale^\infty$\hyp{}manifolds) locally modeled on Banach scales.

As expected, scale manifolds inherit structures from the underlying local model. Specifically, an $\scale^\infty$\hyp{}manifold $M$ gives rise to a natural filtration $\{M_k\}_{k\in\setNz}$ induced by the local scale structure, and for each point of a filtration subspace $M_k$, $k\geq 1$, we can associate a partial Banach scale which plays the role of the tangent space. Also, a tangent bundle $\pi_{\Tg M}: \Tg M \mapsto M^1$ is defined, where $M^1 = \{M_{k+1}\}_{k\in\setNz}$ is the shifted filtration. The shift appearing in the base space of the bundle reflects the higher regularity of the differentiation points.

Scale smoothness and polyfolds were originally introduced with the purpose of solving problems in symplectic field theory and related areas \cite{fabert10,hofer11}. Fabert et al.~\cite{fabert12} review the theory in a broader perspective, also extending the idea of Banach scales to filtrations of topological spaces. Wehrheim \cite{wehrheim12} elaborates on the Hofer\hyp{}Wysocki\hyp{}Zehnder Fredholm theory of \cite{hofer06,hofer07,hofer08_fredholm,hofer10}, while noting that a Banach scale can be recovered from its (Frèchet) limit and restricted norms. Gerstenberger \cite{gerstenberger16} works with the limits of Banach scales as well, modifying the scale smoothness and Fredholm theories of Hofer and Wehrheim so as to allow for the application of the Nash\hyp{}Moser theorem on ``tame'' Frèchet limits.

We choose scale smoothness and manifolds to handle Hamiltonian PDEs since, as motivated above, Hamiltonian functions and vector fields are expected to be defined on points of higher regularity compared to the flow. In fact, for our guiding example of the free Schrödinger equation, informally differentiating the flow with respect to the time variable delivers
\begin{equation}
	\frac{\d}{\d t} \e^{\i t \Delta} u = \i \Delta \e^{\i t \Delta} u = V_h \circ \e^{\i t \Delta} u
\end{equation}
which exists as an element of $L^2$ whenever $u\in W^{2,2}$. Whereas the more classical formulation devised by Kuksin works with Frèchet smoothness, using scales only to handle the symplectic structure, Hofer\hyp{}Wysocki\hyp{}Zehnder have a native approach, incorporating the scale structure in their definition of smoothness. What is more, scale manifolds suit our example well, as the projective Hilbert space $\projvect(L^2)$ can be given such an $\scale$\hyp{}smooth structure (essentially in the same way as $\setC\projvect{}^d$).

\subsection{Main Contribution}

The work of Hofer\hyp{}Wysocki\hyp{}Zehnder allows us to generalize Banach scales to the manifold context, and it is not difficult to carry the generalization through to $\scale$\hyp{}smooth vector fields and flows: for example, a vector field is simply an $\scale$\hyp{}smooth section of the tangent bundle $V: M^1 \mapsto \Tg M$. Nevertheless, their work lacks a notion of a symplectic structure on an $\scale$\hyp{}smooth manifold. Naturally, with the lack of symplectic structures comes the lack of Hamiltonian vector fields and flows. Furthermore, it is not clear in what sense a Hamiltonian function should be smooth so as to obtain an $\scale$\hyp{}smooth vector field via a suitable symplectic gradient relation.

To fill this gap, we propose to define symplectic scale manifolds as scale manifolds locally modeled on a symplectic Banach scale, endowed with a maximal atlas of coordinate charts where all transition maps are symplectomorphisms. The latter condition allows for the definition of a cotangent bundle $\Tg^* M$ and a canonical isomorphism $\iota_\omega$ between tangent and cotangent bundles. Furthermore, we narrow down the scale smoothness concept of Hofer\hyp{}Wysocki\hyp{}Zehnder to what we baptize as \emph{strong} scale smoothness\footnote{Not to be confused with the side definition of \cite[Remark~1.3]{hofer17}.}. As it turns out, a desirable definition for the regularity of a Hamiltonian function requires the test vectors of the scale derivative to be taken from spaces of increasingly lower regularity as the regularity of the differentiation point increases, and the original $\scale$\hyp{}smoothness concept is too weak to accommodate this requirement. We prove that the concept of strong scale smoothness is invariant under pre\hyp{}composition with symplectomorphisms, hence it is consistent with symplectic $\scale$\hyp{}smooth manifolds.

The definition of strong scale smoothness leads to a natural generalization of Hamiltonian vector fields and flows in a symplectic $\scale$\hyp{}smooth manifold $M$: for a strongly $\scale$\hyp{}smooth function $h: M^1 \mapsto \setR$, we can interpret its derivative as an $\scale$\hyp{}smooth section of the cotangent bundle $\D h: M^1 \mapsto \Tg^* M$ and, as in the finite\hyp{}dimensional case of \eqref{eq:symplectic_gradient2_finitedim_mfld}, $\D h$ gives rise to an $\scale$\hyp{}smooth vector field $V_h$ by means of the bundle isomorphism $\iota_\omega: \Tg M \isomapsto \Tg^* M$ and the symplectic gradient relation
\begin{equation}
	-\D h = \iota_\omega \circ V_h : M^1 \mapsto \Tg^* M \,.
\end{equation}

The contributions of this work are developed and presented at the hand of the free Schrödinger equation \eqref{eq:free_schrodinger_intro} which, in the authors' modest opinion, is simple enough to avoid distractions and, at the same time, serves as a prototypical example exhibiting the core property of Hamiltonian PDEs: the vector field is only densely defined. Correspondingly, the projective Hilbert space $M = \projvect(L^2)$ is presented as a symplectic $\scale$\hyp{}smooth manifold locally modeled on the Hilbert scale $X = \{X_k = W^{2k,2}\}_{k\in\setNz}$. The flow $\varphi_h: \setR \times X \mapsto X$ of \eqref{eq:flow_freeschrodinger_intro} is shown to be $\scale$\hyp{}smooth and Hamiltonian, generated by the strongly $\scale$\hyp{}smooth Hamiltonian function $h: X^1 \mapsto \setR$ of \eqref{eq:hamiltonian_freeschrodinger_intro}. In the trend of symplectic reduction, these maps are subsequently seen to descend to maps $\bar \varphi_h: \setR \times M \mapsto M$ and $\bar h: M^1 \mapsto \setR$ inheriting the corresponding regularity properties, and $\bar \varphi_h$ is concluded to be a Hamiltonian flow generated by $\bar h$.

\subsection{Organization of Paper}
This paper is based on the first author's master thesis on the topic \cite{crespo17}, and the remainder is organized in three sections. Section~\ref{sec:scales} starts by reviewing linear scale structures. We introduce the concepts of Banach and Hilbert scales, showing how to build such a scale departing from a separable infinite\hyp{}dimensional Hilbert space. We also outline basic notions of linear symplectic geometry on scales. Subsequently, in Section~\ref{sec:hwz}, we present scale calculus by Hofer\hyp{}Wysocki\hyp{}Zehnder and corresponding scale manifolds. Finally, Section~\ref{sec:hampde} contains the main contribution of this paper. We extend the notions of vector fields and flows to scale manifolds and define strong scale smoothness on Banach scales, deducing its invariance under pre\hyp{}composition with symplectomorphisms. After this, we generalize the introduced concepts to the manifold setting. Throughout the section, we pair the developed theory with the guiding example of the free Schrödinger equation.

\subsection{General Notation}
In this paper, by convention, the natural numbers $\setN = \{1,2,3,\ldots\}$ start at one and we notate $\setNz = \{0\} \cup \setN = \{0, 1, 2, \ldots\}$ when starting at zero. Also, natural numbers can be considered as sets: $m = \{0, 1, \ldots, m-1\}$ for $m\in\setN$. For the Kronecker delta, we use $\delta_{mn}$. For sets $X$ and $Y$, $Y^X$ denotes the set of functions $X \mapsto Y$. Furthermore, we denote the strictly positive real numbers by $\setRppos$, the complex unit by $\i$, the real and imaginary parts of complex numbers by $\Re\{\cdot\}$ and $\Im\{\cdot\}$, respectively, and complex conjugation by $\conj{(\cdot)}$. Hermitian inner products are conjugate\hyp{}linear in the second argument. For a subset of some ambient space $U\subset X$, the notation $U(x)$ signifies $x\in U$. For the tangent bundle of a manifold $M$ at $p\in M$, we use $\Tg_p M$, and for the tangent and cotangent bundle, $\Tg M$ and $\Tg^* M$, respectively. The derivative of a map of manifolds $f: M \mapsto N$ at $p\in M$ is $\d_p f: \Tg_p M \mapsto \Tg_{f(p)} N$. For the time derivative of a curve $u: \setR \mapsto Z$, we use $\d u / \d t: \setR \mapsto Z$ or simply $\dot u$. The remaining notation is either standard or explained in the course of the paper.

\section{Scale Structures} \label{sec:scales}

In this section, we introduce Banach and Hilbert scales. We first define Banach scales within a framework which allows for natural operations such as translation of the index set, the product of scales, or a scale composed out of the topological duals of each space --- the ``dual scale''. Within the presented framework, we explore morphisms and several types of maps between scales. Subsequently, we narrow the focus down to Hilbert scales, where each underlying space has a compatible inner product. We show that a single separable Hilbert space induces a Hilbert scale prototypically modeled on weighted $l^2$ spaces. We also introduce symplectic structures on Banach and Hilbert scales.

\subsection{Banach Scales} \label{sec:banach_scales}

For introducing Banach scales, we use the basic categorical structure of a projective system. In this paper, we opt for a brief presentation, referring details to \cite{garrett10,leinster14}. Although Banach scales are frequently dealt with within a simpler framework of a filtration, when introducing dual scales, this simple framework will not suffice. Indeed, as we will see in this section, the dual scale of a filtration is not bond by inclusions but by adjoints of inclusions, and the explicit structure of a filtration disappears. Luckily, these adjoints will still be injective and it will still be useful to intuitively think of Banach scales as filtrations.

Consider the category of locally convex spaces (LCS) over a fixed field $\setF = \setR~\txt{or}~\setC$, with continuous linear maps as morphisms, and a non\hyp{}empty index set $S \subseteq \setZ$. A projective system of LCS on $S$ is a family $X \eqdef \{X_s\}_{s \in S}$ of LCS together with maps $p_{sr}: X_s \mapsto X_r$ for all $s>r \in S$, the so-called bonding maps, such that $p_{sq} = p_{rq} \circ p_{sr}$ for all $s>r>q \in S$. To such a projective system, we can assign a limit: a pair consisting of an LCS, $X_\infty \eqdef \lim_{s\in S} X_s$, and a collection of morphisms, $\{p_{\infty s}: X_\infty \mapsto X_s\}_{s\in S}$, such that $p_{\infty r} = p_{sr} \circ p_{\infty s}$ for all $s>r \in S$ and with the universal property that for any other such pair $(Y, \{f_s: Y \mapsto X_s\})$, there exists a a unique map $\phi: Y \mapsto X_\infty$ such that $f_s = p_{\infty s} \circ \phi$ for all $s \in S$. From this condition it follows that the limit is unique up to isomorphism, and an explicit expression is
\begin{equation} \label{eq:proj_limit_explicit}
	X_\infty = \left\{x=(x_s)_{s\in S} \in \prod_{s \in S} X_s: p_{sr}(x_s) = x_r~\txtt{for all}~s>r \right\}
\end{equation}
together with the canonical projections $p_{\infty s}: X_\infty \mapsto X_s, \, x \mapsto x_s$. Similarly, we can define the notion of a colimit $X_{-\infty} \eqdef \colim_{s\in S} X_s$ by reversing the arrows in the definition. Although $X_\infty$ is Hausdorff whenever all $X_s$ are so, the same is not compulsorily true for $X_{-\infty}$ \cite{floret80}.

The most important example of a projective system is a descending filtration of LCS, $\{X_s\}_{s\in S}$ with $X_s \subset X_r$ for $s>r$, bond by the canonical inclusions. In this case, the limit is $X_\infty = \bigcap_{s \in S} X_s$, colimit $X_{-\infty} = \bigcup_{s \in S} X_s$ and the corresponding limit maps are the inclusions as well. As in this particular case, the bonding maps will always be injective in this work, whence we notate them suggestively as $\iota_{sr}$ instead of $p_{sr}$. Due to this assumption, much of the set-theoretic machinery which is valid for filtrations remains valid in general.

In the following technical discussion, we bridge ubiquitous intuition, available when dealing with filtrations, with the more general case of an injective projective system. Basically, for any $s\in S\cup\{\infty\}$ and any $A\subset X_s$, we have isomorphic copies $\iota_{sr}(A) \subset X_r$ of $A$ for all $r<s$, whence it makes sense to define set-theoretical relations on subsets of different spaces. For $A\subset X_s$, $B\subset X_r$, $s>r$, and an arbitrary set $Z$, we employ the following conventions:
\begin{itemize}
	\item We say that $A \subset B$ if $\iota_{sr}(A) \subset B$ and identify $A$ with $\iota_{sr}(A)$.
	\item If $A\subset B$ and $f: B \mapsto Z$ is a function, we can restrict it to $A$, specifically $f\restr{A} \eqdef f \circ \iota_{sr}$. Dually, if we have $f: Z \mapsto A$ instead, we can embed the codomain in $B$.
	\item Switching the roles of the spaces, for $C\subset X_s$ and $D\subset X_r$, $s>r$, we have $D \subset C$ if $D \subset \iota_{sr}(C)$ and in that case, we identify $D$ with $\iota_{sr}^{-1}(D) \subset C$.
	\item Let now $A \subset B$ and $f: Z \mapsto B$. Then the codomain of $f$ restricts to $A$ if and only if $f(Z) \subset A$.
	\item We define $A\cap B \subset X_s$ by $A \cap \iota_{sr}^{-1}(B)$, which is isomorphic to $\iota_{sr}(A) \cap B \subset X_r$.
\end{itemize}

Still on an injective projective system $X$, we define a system of subsets on $X$, $A \subset X$, to be a collection of subsets $A = \{A_s\}_{s\in S}$ with $A_s \subset X_s$ on each $s\in S$, such that $A_s \subset A_r$ for all $s>r$ (as with the above conventions). It is called a system of open subsets if each $A_s$ is open in $X_s$. Such a collection allows us to restrict the projective system to subsets by restricting the bonding maps $\iota_{sr}: A_s \hookrightarrow A_r$. For two projective systems $X$ and $Y$ on $S$ and two systems of subsets $A\subset X$ and $B\subset Y$, we define the cartesian product $A \times B \eqdef \{A_s \times B_s\}_{s\in S}$. If the domains of $X$ and $Y$ differ, say the domains are $S$ and $S'$, respectively, we restrict them to the common domain $S\cap S'$ before taking the cartesian product. We also define, for $\tau\in\setZ$, the shifted system $A^\tau \eqdef \{A_{s+\tau}\}_{s\in S \cap (S-\tau)}$. Although the shifted system could be defined to take values on $S-\tau$, we do not use indices outside $S$ in this work.

Finally, before introducing Banach scales, we establish conventions on operator space topologies. Unless otherwise mentioned, we endow the set $\B(X, Y)$ of linear continuous maps between two Banach spaces $X$ and $Y$ with the operator norm $\|L\| \eqdef \sup_{x \in X, \|x\|_X \leq 1} \|L(x)\|_Y$. This norm makes $\B(X, Y)$ into a Banach space and its induced topology is called the strong or norm topology. Correspondingly, we denote by $X^* \eqdef \B(X, \setF)$ the topological dual of $X$ with operator norm. Yet, occasionally, it will be convenient to work with a weaker topology on $\B(X,Y)$ called the compact\hyp{}open topology. This is the topology with subbasis given by sets of the form $\{ L \in \B(X, Y): L(K) \subset U\}$, where $K\subset X$ is compact and $U\subset Y$ is open. The compact\hyp{}open topology on $\B(X,Y)$ has the property that for a metric space $A$ and a map $f: A \times X \mapsto Y$ with $f(a, \cdot) \in \B(X,Y)$ for all $a\in A$, $f$ is continuous if and only if the induced map $\bar f: A \mapsto \B(X,Y), \, a\mapsto f(a,\cdot)$ is continuous \cite{fox45}. If $\B(X,Y)$ has the strong topology, only the ``if'' part is valid in general.

After the technical framing of injective projective systems, we are now ready to define and explore Banach scales.
\begin{definition} \thlabel{def:banach_scales}
	A Banach scale on a non\hyp{}empty index set $S\subset\setZ$ (over $\setF$) is a projective system $X \eqdef \{X_s\}_{s \in S}$ of LCS where all $X_s$ are Banach spaces, the bonding maps $\iota_{sr}: X_s \hookrightarrow X_r$ are injective, $s>r \in S$,
	\begin{enumerate}
		\item the limit $X_\infty$ is dense in $X_s$ for all $s \in S$, and
		\item the bonding maps $\iota_{sr}$ are compact operators.
	\end{enumerate}
	The space $X_s$ is called the $s\th$ layer or level of $X$, $s\in S$. If each $X_s$ is completely normable but no specific norm is available, we call $X$ a Banachable scale. If $S=-S$ and $0\in S$, then $X_0$ is called the center of the scale\footnote{This is not to be confused with the center as defined in \cite{bonic67}, which is the limit of the scale instead.}. Also, for a property of a Banach(able) space $\mathcal{P}$, {\it e.g.}, reflexivity or separability, $X$ is said to have property $\mathcal{P}$ whenever all $X_s$ have property $\mathcal{P}$, $s\in S$.
\end{definition}

Compactness of the bonding maps is crucial in applications \cite{hofer06} and allows for a chain rule when we introduce calculus in this framework. We should also note that $X_\infty$ is a Frèchet space due to the (at most) countable cofinality of $S$ \cite{garrett13,schaefer71}. A trivial example of a scale is the constant scale $X_s = X$, $s \in S$ for a finite\hyp{}dimensional vector space $X$. In fact, this is the only possible scale if one of the spaces is finite\hyp{}dimensional, since finite\hyp{}dimensional subspaces of a normed space are closed. On the other hand, due to the compactness requirement, the same construction is not a scale if $X$ is infinite\hyp{}dimensional, unless $S$ is a singleton. The same argument shows that all scales $X = \{X_s\}_{s\in S}$ built out of infinite\hyp{}dimensional Banach spaces $X_s$ are proper, {\it i.e.}, $X_s \subsetneq X_r$ for all $s>r$.

We proceed with two important propositions which allow us to construct new scales departing from old ones by means of natural operations such as restriction, translation, products and taking duals of the individual Banach spaces. 
\begin{proposition} \thlabel{prop:restriction_scales}
	Let $X = \{X_s\}_{s \in S}$, $S \subseteq \setZ$, be a Banach scale and let $S' \subseteq S$ be a non\hyp{}empty subset. Then $X\restr{S'} \eqdef \{X_s\}_{s \in S'}$ is a Banach scale with bonding maps $\iota'_{sr} \eqdef \iota_{sr}$, $s>r \in S'$. Its limit $X'_\infty \eqdef \lim_{s \in S'} X_s$ is $X_\infty$ if $S'$ is unbounded above and $X_{\max S'}$ otherwise.
\end{proposition}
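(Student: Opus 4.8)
The plan is to verify the two defining axioms of a Banach scale (\thref{def:banach_scales}) for the restricted system, and then compute the limit separately in the two cases. The underlying spaces $X_s$, $s \in S'$, are unchanged, hence still Banach spaces; the candidate bonding maps are the restrictions $\iota'_{sr} = \iota_{sr}$ for $s > r$ in $S'$, which are automatically injective and satisfy the cocycle identity $\iota'_{sq} = \iota'_{rq} \circ \iota'_{sr}$ for $s > r > q$ in $S'$, since these are inherited directly from the corresponding identities in $X$. Likewise, each $\iota'_{sr}$ is a compact operator because it equals the compact operator $\iota_{sr}$, so axiom (b) is immediate. The only points requiring genuine argument are the identification of the limit $X'_\infty$ and the density axiom (a), and these are naturally intertwined.

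First I would treat the case where $S'$ is unbounded above. Here I claim $X'_\infty = X_\infty$ with the obvious limit maps $\iota_{\infty s}$, $s \in S'$. To see this, I would invoke the universal property: given any cone $(Y, \{f_s : Y \to X_s\}_{s \in S'})$ over the restricted system, I need to extend it to a cone over all of $S$ and then apply the universal property of $X_\infty$. For $t \in S \setminus S'$, pick (using unboundedness of $S'$) some $s \in S'$ with $s > t$ and set $f_t \eqdef \iota_{st} \circ f_s$; one checks via the cocycle identities that this is well-defined (independent of the choice of $s$) and that $(Y, \{f_s\}_{s \in S})$ is a cone over the full system. The universal property of $X_\infty$ then yields the unique factoring map $Y \to X_\infty$, and uniqueness transfers back because the maps $\iota_{\infty s}$ for $s \in S'$ already determine a map into $X_\infty$ up to the behaviour on a cofinal subfamily. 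Density of $X_\infty = X'_\infty$ in each $X_s$, $s \in S'$, is then simply a restriction of axiom (a) for $X$ to the indices in $S'$.

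For the case where $S'$ is bounded above, let $m \eqdef \max S'$; I claim $X'_\infty = X_m$ with limit maps $\iota_{m s}$ for $s \in S'$ (and $\iota_{mm} = \id$). The universal property is even easier here: any cone $(Y, \{f_s\}_{s \in S'})$ satisfies $f_r = \iota_{mr} \circ f_m$ for all $r < m$ in $S'$ by the compatibility condition, so $f_m$ itself is the required factoring map $Y \to X_m$, and it is the unique one since $\iota_{mm} = \id$. It remains to check density axiom (a), i.e.\ that $X'_\infty = X_m$ is dense in $X_s$ for every $s \in S'$; but since $s \le m$, we have $X_m \subset X_s$ and $\iota_{ms}(X_m) \supset \iota_{m s}(\iota_{\infty m}(X_\infty)) = \iota_{\infty s}(X_\infty)$, which is dense in $X_s$ by axiom (a) for the original scale $X$, so $X_m$ is a fortiori dense in $X_s$.

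The main obstacle, such as it is, lies in the unbounded case: one must argue carefully that extending the cone to the omitted indices $t \in S \setminus S'$ is canonical and compatible, so that the factoring map produced by the universal property of $X_\infty$ genuinely factors the original restricted cone and is unique with respect to it. This is a routine but slightly fiddly diagram chase with the cocycle relations $\iota_{sq} = \iota_{rq} \circ \iota_{sr}$; everything else is bookkeeping, using injectivity of the bonding maps (and the identification conventions set up before \thref{def:banach_scales}) to suppress notational clutter.
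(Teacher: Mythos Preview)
Your proposal is correct and follows essentially the same approach as the paper: both proofs reduce the limit computation to a cofinality argument, with the paper simply invoking the standard fact that a cofinal subsystem has the same limit (citing \cite{garrett10}) while you unpack that fact by hand via the explicit extension of cones. The density arguments are identical in substance.
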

\begin{proof}
	It is clear that the projective system $\{X_s\}_{s \in S}$ restricts to an injective projective system on $S'$ with compact bonding maps. To compute the limit $X'_\infty$, we distinguish two cases. If $S'$ is unbounded above, then so is $S$. Consequently, $S'$ is cofinal in $S$, meaning that for each $r\in S$ there is $s \in S'$ such that $s\geq r$. This implies that the limits $X_\infty$ and $X'_\infty$ are uniquely isomorphic \cite{garrett10}, and the density claim follows immediately. On the other hand, if $S'$ is bounded above, then $\{\max S'\}$ is cofinal in $S'$. The limit is then $X_{\max S'}$ and density follows from $X_\infty \subset X_{\max S'}$ (via $\iota_{\infty, \max S'}$).
\end{proof}

\begin{proposition} \thlabel{prop:scale_constructions}
	Let $X \eqdef \{X_s\}_{s \in S}$ and $Y \eqdef \{Y_s\}_{s \in S}$ be Banach scales on $S\subset \setZ$ with (co-)limits $X_{\pm\infty}$, $Y_{\pm\infty}$ and maps $\iota^X_{sr}$ and $\iota^Y_{sr}$, $s>r \in S\cup\{\pm\infty\}$, respectively. We introduce the following constructions:
	\begin{enumerate}
		\item (shifted scale) For $\tau\in\setZ$, $X^\tau \eqdef \{X_{s+\tau}\}_{s \in S-\tau}$ is a Banach scale on $S-\tau = \{s-\tau: s \in S\}$ with bonding maps $\iota^X_{s+\tau,r+\tau}$, $s>r\in S-\tau$, and limit $(X_\infty, \{\iota^X_{\infty, s+\tau}\}_{s\in S-\tau})$. \label{prop:shifted_scale}
		\item (product of scales) $X \times Y \eqdef \{X_s \times Y_s\}_{s \in S}$ is a Banach scale on $S$ with bonding maps $\iota_{sr}^X \times \iota_{sr}^Y$, $s>r$, and limit $(X_\infty \times Y_\infty, \{\iota_{\infty s}^X \times \iota_{\infty s}^Y\}_{s\in S})$. \label{prop:direct_sum_scale}
		\item (dual scale) Endow $X_s^*$ with the operator norm, $s\in S$. If each $X_s$ is a reflexive Banach space, then $X_{-\infty}$ is a Hausdorff LCS and $X^* \eqdef \{X^*_{-s}\}_{s \in -S}$ is a Banach scale on $-S = \{-s: s \in S\}$ with bonding maps $(\iota^X_{-r,-s})^*: X^*_{-s} \hookrightarrow X^*_{-r}$, $s>r \in -S$, and limit $(X_{-\infty}^*, \{(\iota^X_{-s, -\infty})^*\}_{s\in -S})$. \label{prop:dual_scale}
	\end{enumerate}	
\end{proposition}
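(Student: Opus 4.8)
The plan is to treat the three constructions in turn; (a) and (b) are formal, whereas (c) carries all the content. For (a), the shift $s\mapsto s+\tau$ is an order isomorphism of $S-\tau$ onto $S$, so re-indexing transports everything verbatim: the projective-system axioms, the injectivity and compactness of the bonding maps, and the density of the limit in each layer. In particular $\lim_{s\in S-\tau}X_{s+\tau}\iso\lim_{t\in S}X_t=X_\infty$, with projections $\iota^X_{\infty,s+\tau}$. For (b), give $X_s\times Y_s$ the maximum norm, so that it is a Banach space carrying the product topology; the maps $\iota^X_{sr}\times\iota^Y_{sr}$ are injective and obey the cocycle identity coordinatewise, and they are compact because the image of a bounded set $B\subset X_s\times Y_s$ lies inside the product $\overline{\iota^X_{sr}(\pi_XB)}\times\overline{\iota^Y_{sr}(\pi_YB)}$ of two compact sets. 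The explicit formula \eqref{eq:proj_limit_explicit} identifies a compatible family in $\prod_s(X_s\times Y_s)$ with a pair of compatible families, whence $\lim_s(X_s\times Y_s)=X_\infty\times Y_\infty$ with projections $\iota^X_{\infty s}\times\iota^Y_{\infty s}$, and $X_\infty\times Y_\infty$ is dense in $X_s\times Y_s$ because each factor is. (Equivalently: limits commute with finite products.)

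For (c), I would dispose of the routine part first. Each $X_s^*$ with the operator norm is a Banach space. For any $a>b$ in $S$ the inclusion $\iota^X_{ab}\colon X_a\to X_b$ has dense range, since this range contains the image of $X_\infty$ and $X_\infty$ is dense in $X_b$; hence its adjoint $(\iota^X_{ab})^*\colon X_b^*\to X_a^*$ is injective, and by Schauder's theorem it is compact because $\iota^X_{ab}$ is. Functoriality of the adjoint, $(\iota^X_{ac})^*=(\iota^X_{ab})^*\circ(\iota^X_{bc})^*$ for $a>b>c$, supplies the cocycle identity. Taking $(a,b)=(-r,-s)$ shows that $\{X^*_{-s}\}_{s\in -S}$ is an injective projective system on $-S$ with compact bonding maps.

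The substantive step is the density axiom for the dual scale, and this is where reflexivity is indispensable. For $a>b$ in $S$, injectivity of $\iota^X_{ab}$ means the range of $(\iota^X_{ab})^*$ has trivial annihilator in $X_a$, hence is weak-$*$ dense in $X_a^*$; since $X_a$ is reflexive the weak-$*$ topology on $X_a^*$ coincides with its weak topology, and a convex set has the same weak and norm closures (Mazur), so that range is in fact norm-dense. Thus the dual scale has bonding maps with dense range, and a standard Mittag--Leffler (diagonal-approximation) argument for projective sequences of Banach spaces yields that its limit is dense in each layer $X^*_{-s}$. Finally the limit is $X_{-\infty}^*$: by the universal property of the colimit, a continuous linear functional on $X_{-\infty}=\colim_sX_s$ is exactly a compatible family $(\phi_s)_{s\in S}$, $\phi_s\in X_s^*$, $\phi_s=(\iota^X_{sr})^*\phi_r$ for $s>r$ --- that is, an element of $\lim_{s\in-S}X^*_{-s}$ --- and the limit projections are the adjoints $(\iota^X_{-s,-\infty})^*$ of the canonical maps $X_{-s}\to X_{-\infty}$. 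That $X_{-\infty}$ is a Hausdorff LCS, so that ``$X_{-\infty}^*$'' is a genuine strong dual and the projective-limit and strong topologies agree, follows from the compactness of the bonding maps, which makes $X_{-\infty}$ a Silva space --- in particular Hausdorff, reflexive, and a regular inductive limit.

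I expect the main obstacle to be precisely this analysis of the dual scale's limit: pushing the density axiom through the reflexivity--Mazur--Mittag--Leffler chain, and matching the strong dual topology on $X_{-\infty}^*$ with the projective-limit topology on $\lim_sX_s^*$, which rests on $X_{-\infty}$ being a regular (Silva) inductive limit. Everything else --- parts (a) and (b), and the injectivity/compactness bookkeeping in (c) --- is routine once one has Schauder's theorem and the density of $X_\infty$ in each layer of $X$.
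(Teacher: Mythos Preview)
Your proof is correct and follows essentially the same architecture as the paper's: parts (a) and (b) are dispatched formally via re-indexing and the fact that limits commute with finite products, and part (c) hinges on Schauder's theorem for compactness, dense range for injectivity of the adjoint, and reflexivity for dense range of the adjoint itself.

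The technical execution of (c) differs in two minor but instructive ways. First, for the dense range of $(\iota^X_{ab})^*$, the paper argues via the double adjoint --- reflexivity gives $(\iota^X_{ab})^{**}=\iota^X_{ab}$ injective, whence $(\iota^X_{ab})^*$ has dense range by the same annihilator duality you invoke --- whereas you unfold this as weak-$*$ density plus reflexivity plus Mazur; the two arguments are equivalent, yours just makes the role of reflexivity more visible. Second, for the density of the limit in each layer and the Hausdorffness of $X_{-\infty}$, the paper simply cites results of Dubinsky, while you name the mechanisms (Mittag--Leffler for density, Silva/DFS structure for Hausdorffness and regularity); your route is more self-contained and, in particular, your attention to matching the strong-dual topology on $X_{-\infty}^*$ with the projective-limit topology is a point the paper leaves implicit.
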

\begin{proof}
	The crux of \ref{prop:shifted_scale} is that a cone to $X$ induces a cone to $X^\tau$ and vice\hyp{}versa by shifting indices by $\tau$. Similarly, \ref{prop:direct_sum_scale} is proven by adjoining the universal cones $X_\infty$ and $Y_\infty$ to a cone $X_\infty \times Y_\infty$ to $X\times Y$ and proving universality by noting that if $(Z, \{f_s\})$ is a cone to $X\times Y$, then $(Z, \{\pr^X_s \, f_s\})$ and $(Z, \{\pr^Y_s \, f_s\})$ are cones to $X$ and $Y$, respectively, where $\pr^X_s: X_s \times Y_s \mapsto X_s$ and $\pr^Y_s: X_s \times Y_s \mapsto Y_s$ are the canonical projections. Also, products of compact operators are compact.
	
	As to \ref{prop:dual_scale}, first note that $(\iota^X_{-r,-s})^*$ is compact for $s>r\in -S$ since the adjoint of a compact operator is compact \cite[Theorem~4.19]{rudin91}. Secondly, the map $(\iota^X_{-r,-s})^*$ is injective since $\iota^X_{-r,-s}$ has dense range \cite[Theorem~4.12]{rudin91}. Thirdly, since $X_{-s}$ and $X_{-r}$ are reflexive, the double adjoint $(\iota^X_{-r,-s})^{**} = \iota^X_{-r,-s}: X_{-r}^{**} \iso X_{-r} \mapsto X_{-s} \iso X_{-s}^{**}$ is injective, which again by \cite[Theorem~4.12]{rudin91} implies that $(\iota^X_{-r,-s})^{*}$ has dense range. Consequently, by \cite[Proposition~2]{dubinsky72}, the limit $(X^*)_\infty$ of $X^*$ is dense in each $X_{-s}^*$, $s\in -S$ and by \cite[Proposition~3]{dubinsky72}, the colimit $X_{-\infty}$ of $X$ is Hausdorff. The computation of the limit $(X^*)_\infty \iso X_{-\infty}^*$ is a standard result of category theory, and a thorough derivation can be found in \cite{crespo17,leinster14}.
\end{proof}
\begin{remark}
	Similar remarks hold as for handling systems of subsets:
	\begin{enumerate}
		\item Although the shifted scale $X^\tau$ is defined on $S-\tau$, in the sequel of this document, we will restrict this type of scales to $S \cap (S-\tau)$.
		\item For the product scale, if $\{X_s\}_{s \in S}$ and $\{Y_s\}_{s \in S'}$ are defined on different subsets $S,S'\subset\setZ$, one restricts both scales to the overlap $S\cap S'$ before applying the product construction.
	\end{enumerate}
\end{remark}

Our next goal is to study maps between scales and their systems of subsets. In the roughest case, a map between scales can be a non\hyp{}linear map between each layer. Eventually, we can add more structure to the map and require each layer to be linear or even continuous. This gives rise to the notion of morphisms and isomorphisms of scales.
\begin{definition} \thlabel{def:maps_scales}
	Let $X = \{X_s\}_{s \in S}$, $Y = \{Y_s\}_{s \in S}$ and $Z = \{Z_s\}_{s\in S}$ be Banach scales on $S\subset\setZ$, and let $A\subset X$, $B\subset Y$ and $C\subset Z$ be systems of subsets.
	\begin{enumerate}
		\item A map between the two systems of subsets $A$ and $B$ is a family of functions $f \eqdef \{f_s: A_s \mapsto B_s\}_{s\in S}$ which satisfies the compatibility requirement $f_r\restr{A_s} = f_s$ for all $s>r\in S$ (as maps from $A_s$ to $B_r$). Explicitly, this means that the diagram
\[ \begin{tikzcd}
	A_s \ar{r}{f_s} \ar[hook]{d}{\iota^X_{sr}}	& B_s \ar[hook]{d}{\iota^Y_{sr}} \\
	A_r \ar{r}{f_r}								& B_r
\end{tikzcd} \]
is required to commute for all $s>r$. When the systems of subsets are clear from context, $f$ is simply said to be a map of scales or a scale map. A map between the (full) Banach scales $X$ and $Y$ is simply defined as a map between the trivial systems of subsets $A=X$ and $B=Y$.
	\item Composition of maps of scales is defined layer\hyp{}wise: if $f: A \mapsto B$ and $g: B \mapsto C$ are maps of scales, then $g\circ f: A \mapsto C$ defined by $(g\circ f)_s = g_s \circ f_s: A_s \mapsto C_s$, $s\in S$, is a map of scales. We define the identity map $\id_A: A \mapsto A,\, (\id_A)_s = \id_{A_s}$.
	\item A map of scales $f: A \mapsto B$ is called injective, surjective or bijective if each $f_s: A_s \mapsto B_s$ is injective, surjective or bijective, respectively. A bijective map of scales $f: A \mapsto B$ defines an inverse map $f^{-1}: B \mapsto A,\, (f^{-1})_s = {f_s}^{-1}$ with $f^{-1}\circ f = \id_A$ and $f\circ f^{-1} = \id_B$.
	\item A map of scales $L: A \mapsto B$ is called linear if $A_s \subset X_s$ and $B_s \subset Y_s$ are linear subspaces and all $L_s: A_s \mapsto B_s$ are linear. 
	\item A map of scales $f: A \mapsto B$ is called continuous or $\scale^0$ if $f_s: A_s \mapsto B_s$ is continuous for all $s \in S$, where $A_s$ and $B_s$ inherit the topologies of $X_s$ and $Y_s$, respectively. It is an isometry if each $f_s$ is an isometry.
	\item \label{def:scale_morphisms} A morphism between $X$ and $Y$, also called an $\scale$\hyp{}operator, is an $\scale^0$ linear map $T: X \mapsto Y$. An isomorphism between $X$ and $Y$, or an $\scale$\hyp{}isomorphism, is a morphism $T: X \mapsto Y$ for which $T_s: X_s \isomapsto Y_s$ is an isomorphism of vector spaces for all $s\in S$. It is an isometric isomorphism if, in addition, it is an isometry.
	\end{enumerate}
\end{definition}
\begin{remark} \thlabel{rmk:maps_scales}
	\begin{enumerate}
		\item If $T: X \isomapsto Y$ is an isomorphism of scales as defined above, the open mapping theorem implies that $T^{-1}: Y \mapsto X$, defined by $(T^{-1})_s = T_s^{-1}$, is also a morphism of scales.
		\item A morphism $T: X \mapsto Y$ defines a unique continuous linear map, the limit map $T_\infty \eqdef \lim_{s\in S} T_s: X_\infty \mapsto Y_\infty$, with the property $\iota_{\infty s}^Y \, T_\infty = T_s \,\iota_{\infty s}^X$ for all $s\in S$. Similarly, it defines the colimit map $T_{-\infty} \eqdef \colim_{s\in S} T_s: X_{-\infty} \mapsto Y_{-\infty}$ uniquely by the property $T_{-\infty} \, \iota_{s,-\infty}^X = \iota_{s,-\infty}^Y \, T_s$ for all $s$. These are isomorphisms of topological vector spaces if $T$ is an $\scale$\hyp{}isomorphism.
		\item If $X$ and $Y$ admit dual scales, a morphism $T: X \mapsto Y$ defines an adjoint morphism $T^* \eqdef \{T_{-s}^{~*}\}_{s\in -S}: Y^* \mapsto X^*$.
		\item \label{rmk:inf_in_scale} If $S$ is bounded below in $\setZ$, with $r \eqdef \min S$, any subset $A_r \subset X_r$ defines a system of subsets by putting $A_s \eqdef A_r \cap X_s$, $s\geq r$, and this system is open if $A_r \subset X_r$ is open. Similar considerations hold for $B_r\subset Y_r$. A function $f_r: A_r \mapsto B_r$ with $f_r(A_s) \subset B_s$ for all $s \geq r$ then defines a unique map of scales $f: A \mapsto B$ which is linear if $f$ is linear, and all maps of scales arise in this way. Note that although $f_r$ injective implies $f$ injective, the same cannot be said about surjectivity. A counterexample is the inclusion $I: X\restr{2\setNz} \mapsto X$, $I_k = \iota^X_{2k,k}: X_{2k} \hookrightarrow X_k$, $k\in\setNz$, for any proper scale $X$ on $\setNz$.
		\item If $X$ and $Y$ are Banach scales on different index sets, say $S$ and $S'$, respectively, we restrict $A\subset X$ and $B\subset Y$ to $S\cap S'$, and define maps of scales $A \mapsto B$ simply as being maps of scales $A\restr{S\cap S'} \mapsto B\restr{S\cap S'}$.
		\item \label{rmk:induced_scale} {\it (induced scales)} If $X = \{X_s\}_{s \in S}$ is now simply a collection of Banach spaces without bonding maps {\it a priori}, $Y = \{Y_s\}_{s \in S}$ is a Banach scale, and we are given continuous linear isomorphisms $\Psi_s: X_s \isomapsto Y_s$ for $s\in S$, then there is a unique structure of a Banach scale on $X$ such that $\Psi = \{\Psi_s\}_{s\in S}$ is an isomorphism of scales. This structure is obtained by pulling back the bonding maps of $Y$, and the required properties for a Banach scale on $X$ are directly derived from the corresponding properties on $Y$. 
	\end{enumerate}
\end{remark}

We conclude this subsection with some basic definitions of linear symplectic geometry for Banach spaces and scales. A (strong) symplectic form on a real Banach space $X$ is a continuous skew\hyp{}symmetric bilinear form $\omega: X \times X \mapsto \setR$ such that the induced map $\iota_\omega: X \mapsto X^*, \, w \mapsto \omega(\cdot, w)$ is an isomorphism of locally convex spaces. The pair $(X,\omega)$ is then called a symplectic Banach space. A symplectic Banach space is always reflexive, since $-(\iota_\omega^{-1})^* \circ \iota_\omega: X \isomapsto X^{**}$ is the canonical injection.

Concerning scales, symplectic structures are defined on Banach scales over the reals and on an index set $S\subset\setZ$ with $S=-S$. A symplectic structure on such a scale $X = \{X_s\}_{s\in S}$ is a skew\hyp{}symmetric collection $\omega = \{\omega_s\}_{s\in S}$ of continuous and bilinear forms $\omega_s: X_s \times X_{-s} \mapsto \setR$, $s\in S$, which induce an isomorphism of scales $\iota_\omega: X \isomapsto X^*, \, X_{s} \owns w \mapsto \omega_{-s}(\cdot, w) \in X_{-s}^*$. In this context, skew\hyp{}symmetry means $\omega_s(v,w) = -\omega_{-s}(w,v)$ for all $v\in X_s$, $w\in X_{-s}$ and $s\in S$. Due to continuity, it is enough to check this condition for smooth vectors $v,w \in X_\infty$. Also, note that the existence of the dual scale $X^*$ is an immediate consequence of the individual isomorphisms in $\iota_\omega$ since, similarly as for the single Banach space, $-((\iota_{\omega})_{-s}^{~-1})^* \circ (\iota_{\omega})_s: X_s \isomapsto X_s^{**}$ is the canonical injection. The pair $(X, \omega)$ is called a symplectic Banach scale.

\begin{remark} \thlabel{rmk:symplectic_scale_extension}
	By using the intrinsic identification of a symplectic Banach scale with its dual, we can extend single\hyp{}sided Banach scales which are isomorphic to a given symplectic scale. Indeed, let $(X,\omega)$ be a symplectic Banach scale on $S = -S \subset\setZ$, and let $Y$ be a Banach scale on $S_{\geq 0} \eqdef S \cap \setNz$ with isomorphism of scales $\Psi: Y \isomapsto X\restr{S_{\geq 0}}$. Let also $S_{> 0} \eqdef S \cap \setN$. The adjoint of $\Psi$ and $\iota_\omega$ induce an isomorphism
	\begin{equation}
		(\Psi^{-1})^*: (Y\restr{S_{>0}})^* \isomapsto (X\restr{S_{>0}})^* \stackrel[\sim]{\iota_\omega}{\mapsfrom} X\restr{-S_{>0}} \,.
	\end{equation}
	Define $Y_s \eqdef Y_{-s}^*$ and $\Psi_s \eqdef (\Psi_{-s}^{~-1})^*: Y_s \isomapsto X_s$ for $s\in -S_{>0}$. As in \thref{rmk:maps_scales}\ref{rmk:induced_scale}, we obtain bonding maps for the extended collection $Y = \{Y_s\}_{s\in S}$ making it into a Banach scale with extended isomorphism $\Psi = \{\Psi_s\}_{s\in S}: Y \isomapsto X$. In particular, by setting $Y = X\restr{S_{\geq 0}}$ and $\Psi = \id_X$, we see that a symplectic Banach scale $(X,\omega)$ is completely determined by its one\hyp{}sided structure $X\restr{S_{\geq 0}}$.
\end{remark}

If $(X, \omega)$ and $(Y, \eta)$ are symplectic Banach scales on $S=-S \subset \setZ$ and $S'\subset S$ is a non\hyp{}empty subset, a morphism of scales $T: X\restr{S'} \mapsto Y\restr{S'}$ always induces a symplectic adjoint
\begin{equation}
	T^{\omega,\eta}: Y\restr{-S'} \stackrel[\sim]{\iota_\eta}{\mapsto} (Y\restr{S'})^* \stackrel{T^*}{\longmapsto} (X\restr{S'})^* \stackrel[\sim]{\iota_\omega}{\mapsfrom} X\restr{-S'}
\end{equation}
uniquely defined by the relation $\eta(Tv, w) = \omega(v, T^{\omega,\eta} w)$ for all $v \in X_{s}$, $w \in Y_{-s}$ and $s \in S'$. If $S' \cap (-S') \neq \emptyset$, such a morphism $T$ is called symplectic if $\eta(Tv, Tw) = \omega(v, w)$ for all $v \in X_s$, $w \in X_{-s}$ and $s\in S'\cap (-S')$. Again due to continuity, it is enough to check this for $v,w\in X_\infty \subset (X\restr{S'})_\infty$. It is easy to see that a morphism $T$ is symplectic if and only if
\begin{equation}
	T^{\omega,\eta} \circ T = \id_{X\restr{S'\cap(-S')}} \,.
\end{equation}
Note that since $(T^{\omega,\eta})^{\eta,\omega} = T$, $T^{\omega,\eta}$ is symplectic if and only if $T \circ T^{\omega,\eta} = \id_{Y\restr{S'\cap(-S')}}$. Also, both $T^{\omega,\eta} \circ T = \id_X$ and $T \circ T^{\omega,\eta} = \id_Y$ hold on $S'\cap (-S')$ if and only if $T$ is an isomorphism of scales on $S'\cap(-S')$ which is symplectic. In that case, $T^{-1} = T^{\omega,\eta}$ on $S'\cap(-S')$ and we call $T$ a linear symplectomorphism of scales.

\subsection{Hilbert Scales} \label{sec:hilbert_scales}
A Hilbert scale is a Banach scale $\{X_s\}_{s\in S}$, where each $X_s$ is required to be a Hilbert space. This important special case of Banach scales arises naturally in symplectic geometry: starting from a complex separable infinite\hyp{}dimensional Hilbert space $X$, we can define a linear symplectic form $\omega: X \times X \mapsto \setR$, a scale structure $\scf{X} = \{X_s\}_{s\in\setZ}$ with center $X_0 \iso X$, and eventually a symplectic structure on $\scf{X}$ derived from $\omega$. To differentiate between Hilbert spaces and scales, and also to avoid ambiguity, we underline Hilbert scales and scale maps in this section.

We start with a prototypical example of a Hilbert scale, $\scf{l^2_\nu}$, which characterizes all Hilbert scales we will be dealing with. This is a scale on $\setZ$ with center $l^2 = \{x \in \setF^\setZ: \sum_{n\in\setZ} |x_n|^2 < \infty \}$, and where the remaining spaces are weighted according to a positive sequence $\nu$. On this scale, we can identify spaces on the one side of the scale with the duals of their symmetric counterparts. Using this prototype we build, for every separable infinite\hyp{}dimensional Hilbert space $X$, a scale $\scf{X}$ on $\setZ$ by pulling back the scale structure of $\scf{l^2_\nu}$ using the isometric isomorphism arising from a Hilbert basis $\{\phi_k\}_{k\in\setZ}$. In principle, it is also possible to have these scales indexed on $\setR$ (it is not difficult to extend the theory this index set), but that will be rarely needed in this document.

\begin{proposition} \thlabel{prop:l2s_scale}
	Let $\nu \in \setRppos^\setZ$ be a sequence with $\nu_n \to \infty$ as $|n|\to \infty$, define
	\begin{equation} \label{eq:l2s_space}
		l^{2,s} \eqdef l_\nu^{2,s} \eqdef \left\{x \in \setF^\setZ: \sum_{n\in\setZ} |x_n|^2 \nu_n^{2s} < \infty \right\}
	\end{equation}
	for $s\in\setZ$, and endow this vector space with the (real or hermitian) inner product
	\begin{equation}
		\langle x, y\rangle_s \eqdef \sum_{n\in\setZ} x_n \conj{y_n} \, \nu_n^{2s} \,.
	\end{equation}
	Then each $l^{2,s}$ is a Hilbert space and $\scf{l^2} \eqdef \scf{l_\nu^2} \eqdef \{l_\nu^{2,s}\}_{s\in\setZ}$ is a Hilbert scale on $\setZ$ with limit $l^{2,\infty} \eqdef \cap_{s\in\setZ} l^{2,s}$ and colimit $l^{2,-\infty} \eqdef \cup_{s\in\setZ} l^{2,s}$. Furthermore, the collection $F \eqdef \Sp\{\delta_k: k\in\setZ\} \subset l^{2,\infty}$ of finite linear combinations of the standard basis $\delta_k(n) = \delta_{kn}$ is dense in each $l^{2,s}$, $s\in\setZ\cup\{\infty\}$.
\end{proposition}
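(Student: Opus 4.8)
The plan is to verify each claim in the order stated, reducing everything to the standard theory of weighted $\ell^2$ spaces and the abstract criteria of \thref{def:banach_scales}. First I would check that each $l^{2,s}$ is a Hilbert space. The form $\langle x,y\rangle_s = \sum_n x_n \conj{y_n}\,\nu_n^{2s}$ is clearly sesquilinear (or bilinear over $\setR$), positive definite since all $\nu_n>0$, and the associated norm is $\|x\|_s^2 = \sum_n |x_n|^2\nu_n^{2s}$. Completeness follows from the observation that the map $x \mapsto (x_n\nu_n^s)_n$ is an isometric isomorphism $l^{2,s} \isomapsto l^2$, so $l^{2,s}$ inherits completeness from the classical $\ell^2$; alternatively one runs the usual Cauchy-sequence argument directly, using that $|n|$-th coordinates converge because $\nu_n^s$ is a fixed positive constant for each fixed $n$.

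Next I would establish that $\{l^{2,s}\}_{s\in\setZ}$ is a descending filtration bonded by inclusions. The key point is that for $s > r$ one has $\nu_n^{2r} \leq C\,\nu_n^{2s}$ for a suitable constant $C = C(s,r) > 0$: indeed, since $\nu_n \to \infty$ as $|n|\to\infty$, there are only finitely many $n$ with $\nu_n < 1$, so $\nu_n^{2(r-s)} = (\nu_n^2)^{r-s}$ is bounded above (here $r - s < 0$). Hence $\|x\|_r \leq \sqrt{C}\,\|x\|_s$, giving a continuous inclusion $\iota_{sr}: l^{2,s}\hookrightarrow l^{2,r}$, manifestly injective, and the cocycle condition $\iota_{sq} = \iota_{rq}\circ\iota_{sr}$ holds trivially since all maps are literal inclusions of sequence spaces. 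The explicit description \eqref{eq:proj_limit_explicit} of the limit then immediately gives $l^{2,\infty} = \bigcap_s l^{2,s}$ and dually $l^{2,-\infty} = \bigcup_s l^{2,s}$.

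The two substantive points are the density of $F$ and the compactness of the bonding maps; these also yield conditions (a) and (b) of \thref{def:banach_scales}. For density, given $x \in l^{2,s}$ I would approximate by the truncations $x^{(N)}$ supported on $\{|n|\leq N\}$; these lie in $F$, and $\|x - x^{(N)}\|_s^2 = \sum_{|n|>N}|x_n|^2\nu_n^{2s} \to 0$ as the tail of a convergent series. Since $F \subset l^{2,\infty}$, this simultaneously shows $F$ dense in every $l^{2,s}$, in $l^{2,\infty}$ (with its Fréchet topology, i.e.\ for all $s$ at once, which the seminorm-wise statement covers), and a fortiori $l^{2,\infty}$ dense in each $l^{2,s}$. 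For compactness of $\iota_{sr}$, $s>r$: given a bounded sequence $(x^{(j)})_j$ in $l^{2,s}$, a diagonal argument extracts a subsequence converging coordinatewise to some $x$; one checks $x\in l^{2,s}$ and $x^{(j)}\to x$ in $l^{2,r}$ by splitting $\|x^{(j)}-x\|_r^2$ into a finite head (which is small by coordinatewise convergence) and a tail $\sum_{|n|>N}|x^{(j)}_n - x_n|^2\nu_n^{2r}$, which is controlled by $\big(\sup_{|n|>N}\nu_n^{2(r-s)}\big)\cdot\|x^{(j)}-x\|_s^2$; since $r - s < 0$ and $\nu_n\to\infty$, the supremum tends to $0$ as $N\to\infty$, uniformly in $j$. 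This is the one place requiring genuine care, and the hypothesis $\nu_n\to\infty$ is exactly what makes the tail uniformly negligible --- this is the heart of the matter and the step I expect to be the main obstacle to write cleanly.
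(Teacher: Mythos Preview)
Your proposal is correct and follows essentially the same outline as the paper's proof: both verify the Hilbert space structure, the inclusion $l^{2,s}\subset l^{2,r}$ for $s>r$, density of $F$ via truncation, and finally compactness of the bonding maps using that $\sup_{|n|>N}\nu_n^{2(r-s)}\to 0$.

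The one genuine difference is the compactness argument. The paper approximates $\iota_{sr}$ in operator norm by the finite-rank truncation maps $p^k_{sr}(x) = x\cdot\ind_{\{|n|<k\}}$, showing $\|\iota_{sr}-p^k_{sr}\|_{\B(l^{2,s},l^{2,r})}^2 \le \sup_{|n|\ge k}\nu_n^{2(r-s)} \to 0$, and then invokes the fact that compact operators form a closed subspace of $\B(l^{2,s},l^{2,r})$. You instead argue sequential compactness directly via diagonal extraction and a head/tail split. Both routes are standard and rest on exactly the same estimate; the paper's version is a bit slicker to write (no subsequence bookkeeping, no separate check that the limit lies in $l^{2,s}$ via Fatou), while yours is more self-contained in that it does not appeal to the closedness of compact operators. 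Either is perfectly acceptable here.
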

\begin{proof}
	Each $l^{2,s}$, $s\in\setZ$, is a weighted $l^2$ space, and consequently a Hilbert space. It is also clear that $l^{2,s} \subset l^{2,r}$ for $s>r \in \setZ$, hence $\scf{l^2} = \{l^{2,s}\}_{s\in\setZ}$ is a descending filtration with mentioned limit and colimit. Let $s\in\setZ\cup\{\infty\}$ and define, for $k\in \setN$, the projection $p_s^k: l^{2,s} \mapsto F$, $x \mapsto x \cdot \ind_{\{-k+1,-k+2,\ldots,k-1\}}$, where the product $\cdot$ is pointwise and $\ind_{(\cdot)} \in \{0,1\}^\setZ$ is the indicator function. Then $F$ is dense in $l^{2,s}$: if $s<\infty$, for $x\in l^{2,s}$, $\|x - p_s^k(x)\|_s \to 0$ as $k\to\infty$, and if $s=\infty$, $\|x - p_s^k(x)\|_{s'} \to 0$ for all $s'\in\setZ$. This implies, in particular, that $l^{2,\infty}$ is dense in $l^{2,s}$ for each $s\in\setZ$. Furthermore, for $\setZ\owns s>r$, let $\iota_{sr}: l^{2,s} \hookrightarrow l^{2,r}$ and $\iota_{\infty r}: l^{2,\infty} \hookrightarrow l^{2,r}$ be the inclusions and define $p^k_{sr} \eqdef \iota_{\infty r}\,p_s^k: l^{2,s} \mapsto l^{2,r}$, $k\in\setN$. Fix $\epsilon>0$ and let $N\in\setN$ be such that $\nu_n^{2(r-s)} \leq \epsilon$ for all $|n|\geq N$. For $x\in l^{2,s}$ with $\|x\|_s \leq 1$, we have that whenever $k\geq N$,
	\begin{equation}
		\|(\iota_{sr}-p^k_{sr})(x)\|_r^2 = \sum_{|n|\geq k} |x_n|^2 \nu_n^{2s} \nu_n^{2(r-s)} \leq \epsilon \|x\|_s^2 \leq \epsilon \,.
	\end{equation}
	From this, we conclude that $\|\iota_{sr}-p^k_{sr}\|_{\B(l^{2,s},l^{2,r})} \to 0$.
Seen that $p^k_{sr}$ is a finite rank operator for each $k\in\setN$ and that the compact operators are a closed subset of $\B(l^{2,s},l^{2,r})$, we conclude that $\iota_{sr}$ is compact.
\end{proof}

\begin{lemma} \thlabel{lem:l2s_isodual}
	Let $\nu \in \setRppos^\setZ$ and $\scf{l_\nu^2}$ be as in \thref{prop:l2s_scale}. Define complex conjugation $\conj{(\cdot)}: l^{2,s} \mapsto l^{2,s}$ pointwise\footnote{This is a conjugate\hyp{}linear isomorphism which squares to the identity when $\setF = \setC$ and simply the identity map when $\setF = \setR$.}. Then
	\begin{enumerate}
		\item \label{lem:l2s_cont} For $s\in\setZ$, we have an ($\setF$-)bilinear continuous pairing $l^{2,s} \times l^{2,-s} \mapsto \setF,\, (x,y) \mapsto \langle x, \conj{y} \rangle_0 = \sum_{n\in\setZ} x_n y_n$.
		\item \label{lem:l2s_iso_expression} The induced map $l^{2,-s} \mapsto (l^{2,s})^*$, $y \mapsto \langle \cdot, \conj{y} \rangle_0$ is an isometric isomorphism with inverse $D \mapsto \{D(\delta_n)\}_{n\in\setZ}$ for each $s\in\setZ$. It induces an isometric isomorphism of scales $\scf{l^2} \isomapsto (\scf{l^2})^*$.
		\item \label{lem:l2-inf_iso} We have an isomorphism of vector spaces $l^{2,-\infty} \isomapsto (l^{2,\infty})^*$ given by the same formula (and same inverse).
	\end{enumerate}
\end{lemma}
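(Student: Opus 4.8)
The plan is to dispatch the three parts in order, with part (b) carrying essentially all the content and parts (a) and (c) reducing to it. Throughout I will use that $\langle x,\conj y\rangle_0=\sum_n x_ny_n$ (so this pairing is genuinely $\setF$-bilinear, also in the complex case) and that the finite linear combinations $F=\Sp\{\delta_k:k\in\setZ\}$ are dense in every $l^{2,s}$, $s\in\setZ\cup\{\infty\}$, by \thref{prop:l2s_scale}.

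For part (a), I would just note that $\sum_n x_ny_n$ converges absolutely by a weighted Cauchy--Schwarz estimate: writing $x_ny_n=(x_n\nu_n^s)(y_n\nu_n^{-s})$ gives $\sum_n|x_ny_n|\le\|x\|_s\,\|y\|_{-s}$. Bilinearity is immediate, and this estimate shows the pairing is continuous with norm at most $1$.

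For part (b), set $\Phi_s\colon l^{2,-s}\to(l^{2,s})^*$, $y\mapsto\langle\cdot,\conj y\rangle_0$; by part (a) this is a well-defined linear map with $\|\Phi_s(y)\|\le\|y\|_{-s}$. To get the reverse inequality — hence that $\Phi_s$ is an isometry — I would test against the explicit element $x$ with coordinates $x_n=\conj{y_n}\,\nu_n^{-2s}$, which lies in $l^{2,s}$ with $\|x\|_s=\|y\|_{-s}$ and satisfies $\Phi_s(y)(x)=\sum_n|y_n|^2\nu_n^{-2s}=\|y\|_{-s}^2$. For surjectivity, given $D\in(l^{2,s})^*$ put $y_n\eqdef D(\delta_n)$ and consider the finite truncations $y^{(k)}\eqdef\sum_{|n|<k}y_n\delta_n\in F$. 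Since $\Phi_s(y^{(k)})(x)=D\big(\sum_{|n|<k}x_n\delta_n\big)$ and the coordinate-truncation projection has operator norm $\le 1$, the isometry already proved gives $\|y^{(k)}\|_{-s}=\|\Phi_s(y^{(k)})\|\le\|D\|$ uniformly in $k$; letting $k\to\infty$ shows $y\in l^{2,-s}$. Then $\Phi_s(y)$ and $D$ are continuous functionals agreeing on $\delta_n$ for every $n$, hence on the dense subspace $F$, so $\Phi_s(y)=D$ and $\Phi_s^{-1}=D\mapsto\{D(\delta_n)\}$. Finally, the collection $\{\Phi_{-s}\colon l^{2,s}\isomapsto(l^{2,-s})^*\}_{s\in\setZ}$ is an isometric isomorphism of scales onto $(\scf{l^2})^*$, which exists by reflexivity of Hilbert spaces (\thref{prop:scale_constructions}\ref{prop:dual_scale}): each layer is an isometric isomorphism by what was just shown, and compatibility with the bonding maps follows from a short coordinate chase, since for $s>r$ both $(\iota_{-r,-s})^*\circ\Phi_{-s}$ and $\Phi_{-r}\circ\iota_{sr}$ send $x$ to the functional $z\mapsto\sum_n z_nx_n$ on $l^{2,-r}$.

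For part (c), the formula $y\mapsto\langle\cdot,\conj y\rangle_0$ is well-defined on $l^{2,-\infty}$ because any such $y$ lies in some $l^{2,-s}$, and then $|\langle x,\conj y\rangle_0|\le\|x\|_s\|y\|_{-s}$ for $x\in l^{2,\infty}\subset l^{2,s}$, which also gives continuity for the Fréchet topology on $l^{2,\infty}$ (generated by the seminorms $\|\cdot\|_s$, $s\in\setZ$); injectivity follows by evaluating on the $\delta_n\in l^{2,\infty}$. For surjectivity, given $D\in(l^{2,\infty})^*$, continuity supplies $k\in\setN$ and $C>0$ with $|D(x)|\le C\max(\|x\|_k,\|x\|_{-k})$ on $l^{2,\infty}$; choosing $N$ with $\nu_n\ge 1$ for $|n|\ge N$, the $(-k)$-norm is dominated by the $k$-norm on sequences supported in $\{|n|\ge N\}$, so $D$ restricted there is bounded by $C\|\cdot\|_k$ and part (b) forces $\{D(\delta_n)\}_{|n|\ge N}\in l^{2,-k}$; adjoining the finitely many remaining coordinates gives $\{D(\delta_n)\}\in l^{2,-k}\subset l^{2,-\infty}$, and density of $F$ in $l^{2,\infty}$ finishes exactly as in part (b).

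I expect the main obstacle to be the surjectivity step in part (c): unlike in part (b) one cannot invoke Hilbert-space duality directly, and one must pin down a single level $l^{2,-s}$ containing the coefficient sequence, which forces unwinding the Fréchet topology of $l^{2,\infty}$ and exploiting $\nu_n\to\infty$ to compare the weighted norms on the relevant index range. The coordinate bookkeeping in the scale-compatibility part of (b) is routine but needs care because of the index reversal built into the definition of the dual scale.
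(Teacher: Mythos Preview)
Your proof is correct and essentially parallels the paper's in part~(a) and in the isometry half of part~(b), but diverges in two places worth flagging. For surjectivity in~(b), the paper invokes the Riesz representation $D=\langle\cdot,x\rangle_s$ for some $x\in l^{2,s}$ and then checks directly that $\{D(\delta_n)\}=\{\conj{x_n}\nu_n^{2s}\}\in l^{2,-s}$; your truncation argument, bounding $\|y^{(k)}\|_{-s}$ uniformly via the already-established isometry, is more elementary in that it does not appeal to Riesz and would work verbatim for non-Hilbert weighted sequence spaces. For part~(c), the paper outsources the factorisation step to a general result on duals of countable projective limits of Banach spaces (any $D\in(l^{2,\infty})^*$ factors through some $l^{2,s}$), whereas you unwind this by hand, using the explicit Fr\'echet seminorms and the hypothesis $\nu_n\to\infty$ to reduce to a single level $l^{2,k}$ on a cofinite index set. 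Your route is more self-contained but costs the extra norm-comparison step you correctly anticipated; the paper's citation is cleaner but hides exactly the content you spell out.
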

\begin{proof}
	For the first statement, note that for $x\in l^{2,s}$ and $y\in l^{2,-s}$, Hölder's inequality gives
	\begin{equation}
		|\langle x, \conj{y} \rangle_0| \leq \sum_{n\in\setZ} \big(|x_n| \nu_n^{s}\big) \big(|y_n| \nu_n^{-s} \big) \leq \|x\|_{s} \|y\|_{-s} \,,
	\end{equation}
	so that $\langle \cdot, \conj{\,\cdot\,} \rangle_0$ is defined and continuous (it is clearly bilinear). As far as the second statement is concerned, by the above, with $y\in l^{2,-s}$, $\|\langle \cdot, \conj{y} \rangle_{0} \| \leq \|y\|_{-s}$. But also with $x_n \eqdef \conj{y_n} \nu_n^{-2s}$, we have $x=\{x_n\}_{n\in\setZ}\in l^{2,s}$, $\|x\|_{s} = \|y\|_{-s}$ and $|\langle x, \conj{y} \rangle_0| = \|y\|_{-s}^2$. We thus conclude that $y \mapsto \langle \cdot, \conj{y} \rangle_0$ is an isometry. Now let $D = \langle \cdot, x \rangle_{s} \in (l^{2,s})^*$ for $x\in l^{2,s}$. Then $D(\delta_n) = \conj{x_n} \nu_n^{2s}$ and $\{\conj{x_n}\nu_n^{2s}\}_{n\in\setZ}\in l^{2,-s}$, hence we have a well\hyp{}defined candidate inverse map. The fact that it is indeed an inverse comes from the fact that $\Sp\{\delta_k: k\in\setZ\} \subset l^{2,s}$ is dense (\thref{prop:l2s_scale}). Finally, since Hilbert spaces are reflexive, $(\scf{l^2})^*$ exists and the scale isomorphism is a direct consequence of the former isomorphisms.
	
	To prove the last statement, we first note that the direct map is well defined, since $(l^{2,s})^* \subset (l^{2,\infty})^*$ for all $s\in\setZ$. For the inverse map, we use the fact that since $l^{2,\infty}$ is a limit of the Banach spaces $l^{2,s}$, any $D\in (l^{2,\infty})^*$ factors through some $l^{2,s}$ \cite[Theorem~5.1.1]{garrett13}. Choose $D\in (l^{2,\infty})^*$ thus arbitrarily and let $s\in\setZ$ and $D_s \in (l^{2,s})^*$ be such that $D = D_s \, \iota_{\infty s}$. Then $D(\delta_n) = D_s(\delta_n) = y_n$, where $y \in l^{2,-s} \subset l^{2,-\infty}$ is the result of the (inverse) isomorphism in \ref{lem:l2s_iso_expression} applied to $D_s$. It follows that the candidate inverse is well\hyp{}defined, and again we invoke \thref{prop:l2s_scale} to complete the proof. 
\end{proof}

Note that even if we choose $\setF = \setC$ in \thref{prop:l2s_scale}, the complex Hilbert spaces $l_\setC^{2,s} = l^{2,s}$ can still be regarded as a \emph{real} Hilbert spaces $(l_\setC^{2,s})_\setR$ by restricting scalar multiplication to the reals and using the inner product $\Re\{ \langle \cdot, \cdot \rangle_s \}$. It is not difficult to see that $\{(l_\setC^{2,s})_\setR\}_{s\in\setZ}$ is still a Hilbert scale (over $\setR$). The following corollary, which is needed to handle real symplectic forms on complex Hilbert spaces, extends \thref{lem:l2s_isodual} to this scale.
\begin{corollary} \thlabel{cor:l2s_complexiso}
	In the setting of \thref{lem:l2s_isodual} with $\setF = \setC$, let $l_\setC^{2,s} = l_{\nu,\setC}^{2,s}$ be Eq.~\eqref{eq:l2s_space}. Define $\tilde\nu \in \setRppos^\setZ$ by $\tilde\nu_{2n} = \tilde\nu_{2n+1} \eqdef \nu_n$, $n\in\setZ$, and let $l_\setR^{2,s} = l_{\tilde \nu,\setR}^{2,s}$ be Eq.~\eqref{eq:l2s_space} with $\setF = \setR$ instead of $\setC$ and $\tilde\nu$ instead of $\nu$. Then we have a continuous pairing
	\begin{equation}
		(l_\setC^{2,s})_\setR \times (l_\setC^{2,-s})_\setR \mapsto \setR,\, (x,y) \mapsto \Re\{\langle x, y \rangle_0\} = \Re\left\{\sum_{n\in\setZ} x_n \conj{y_n}\right\}
	\end{equation}
	which induces an isometric isomorphism.
\end{corollary}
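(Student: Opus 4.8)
The plan is to obtain the statement --- i.e.\ that the pairing $(x,y)\mapsto\Re\{\langle x,y\rangle_0\}$ induces, for each $s$, an isometric isomorphism $(l_\setC^{2,-s})_\setR\isomapsto((l_\setC^{2,s})_\setR)^*$ and, assembled over $s$, an isometric isomorphism of the scale $\{(l_\setC^{2,s})_\setR\}_{s\in\setZ}$ onto its dual --- from \thref{lem:l2s_isodual} by \emph{transport of structure}, using the fact that $(l_\setC^{2,s})_\setR$ is, after interleaving real and imaginary parts, nothing but $l_{\tilde\nu,\setR}^{2,s}$. Concretely, for each $s\in\setZ$ I would define the realification map $R_s\colon (l_\setC^{2,s})_\setR \mapsto l_{\tilde\nu,\setR}^{2,s}$ by $(R_s x)_{2n}\eqdef\Re\{x_n\}$ and $(R_s x)_{2n+1}\eqdef\Im\{x_n\}$ for $n\in\setZ$; since every integer is uniquely $2n$ or $2n+1$, this is a well-defined $\setR$-linear bijection, with inverse sending a real sequence $w$ to $(w_{2n}+\i w_{2n+1})_{n\in\setZ}$. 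The choice $\tilde\nu_{2n}=\tilde\nu_{2n+1}=\nu_n$ is exactly what makes $R_s$ an isometry: $\|R_s x\|_s^2=\sum_n(\Re\{x_n\}^2+\Im\{x_n\}^2)\nu_n^{2s}=\sum_n|x_n|^2\nu_n^{2s}=\Re\{\langle x,x\rangle_s\}$, the squared norm of $(l_\setC^{2,s})_\setR$. As the bonding maps on both sides are set-theoretic inclusions, the $R_s$ commute with them, so they assemble into an isometric isomorphism of scales $\underline R\eqdef\{R_s\}_{s\in\setZ}\colon\{(l_\setC^{2,s})_\setR\}_{s\in\setZ}\isomapsto\scf{l^2_\setR}$; by \thref{rmk:maps_scales}\ref{rmk:induced_scale} this also re-proves that the left-hand family is a Hilbert scale over $\setR$ and (Hilbert spaces being reflexive) that it admits a dual scale.

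Next I would apply \thref{lem:l2s_isodual} to the real scale $\scf{l^2_\setR}$, where $\setF=\setR$ so that complex conjugation is the identity: for each $s$ the map $j_s\colon l_{\tilde\nu,\setR}^{2,-s}\mapsto(l_{\tilde\nu,\setR}^{2,s})^*$, $v\mapsto\langle\,\cdot\,,v\rangle_0$, is an isometric isomorphism, and these assemble into an isometric isomorphism of scales $\scf{l^2_\setR}\isomapsto(\scf{l^2_\setR})^*$. Conjugating by $\underline R$ --- i.e.\ forming $R_s^{\,*}\circ j_s\circ R_{-s}\colon (l_\setC^{2,-s})_\setR\mapsto((l_\setC^{2,s})_\setR)^*$, which is an isometric isomorphism because the adjoint of an isometric isomorphism is one --- and correspondingly at scale level, produces an isometric isomorphism with precisely the indexing of \thref{lem:l2s_isodual}. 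The one remaining point is to check that this composite is the map induced by the stated pairing: for $x\in l_\setC^{2,s}$, $y\in l_\setC^{2,-s}$ one computes directly from the definition of $R$ that $(R_s^{\,*}j_s R_{-s}\,y)(x)=\sum_n\big(\Re\{x_n\}\Re\{y_n\}+\Im\{x_n\}\Im\{y_n\}\big)=\Re\{\sum_n x_n\conj{y_n}\}=\Re\{\langle x,y\rangle_0\}$, as wanted.

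Finally, continuity (boundedness) of the pairing itself is immediate --- it is in any case subsumed by $|\Re\{\langle x,y\rangle_0\}|\le|\langle x,y\rangle_0|\le\|x\|_s\|y\|_{-s}$ from \thref{lem:l2s_isodual}\ref{lem:l2s_cont} --- so the proof would be complete. I do not expect any real obstacle: there is no analytic content beyond \thref{prop:l2s_scale} and \thref{lem:l2s_isodual}, and the only thing needing care is the bookkeeping of the index bijection $\setZ\leftrightarrow\setZ$ underlying the realification together with the direction of the adjoint $R_s^{\,*}$, which simply mirrors the index shift already present in \thref{lem:l2s_isodual}.
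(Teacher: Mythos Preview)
Your proof is correct and follows essentially the same approach as the paper: construct the isometric realification $(l_\setC^{2,s})_\setR \isomapsto l_{\tilde\nu,\setR}^{2,s}$ by interleaving real and imaginary parts, then transport the pairing of \thref{lem:l2s_isodual} through it and verify that the result is $\Re\{\langle x,y\rangle_0\}$. You spell out more of the bookkeeping (the scale assembly and the explicit computation $R_s^{\,*}j_s R_{-s}$) than the paper does, but the idea is identical.
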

\begin{proof}
	By choosing the orthonormal basis of $(l_\setC^2)_\setR$ given by $\tilde\delta_{2k} = \delta_k$, $\tilde\delta_{2k+1} = \i\delta_k$, $k\in\setZ$, we obtain ($\setR$\hyp{}linear) isometric isomorphisms $(l_{\setC}^{2,s})_\setR \isomapsto l_{\setR}^{2,s},\, x \mapsto (\ldots, x^\txt{R}_{-1}, x^\txt{I}_{-1}, x^\txt{R}_{0}, x^\txt{I}_{0},\allowbreak x^\txt{R}_{1}, x^\txt{I}_{1}, \ldots)$, $s\in\setZ$, where $x_n = x_n^\txt{R} + \i x_n^\txt{I}$ is the real\hyp{}imaginary decomposition. Composing this map with the pairing of \thref{lem:l2s_isodual}\ref{lem:l2s_cont} delivers the desired pairing.
\end{proof}

Now let $X$ be a separable infinite\hyp{}dimensional Hilbert space with orthonormal basis $\{\phi_k\}_{k\in\setZ} \subset X$. This basis induces an isometric isomorphism $\Phi: X \isomapsto l^2, v \mapsto \{\langle v, \phi_k \rangle\}_{k\in\setZ}$. Let also $\nu \in \setRppos^\setZ$ be as in the above discussion. From $X$, we construct a Hilbert scale $\scf{X} = \{X_s\}_{s\in\setZ}$ with center $X_0 \iso X$ as follows. Restrict $\Phi$ to an isomorphism of vector spaces $\Phi_\infty \eqdef \Phi\restr{X_\infty}: X_\infty \isomapsto l^{2,\infty}$, with $X_\infty \eqdef \Phi^{-1}(l^{2,\infty}) \subset X$, and pull the limit topology of $l^{2,\infty}$ back to $X_\infty$. Subsequently, define the isomorphism $\Phi_{-\infty}: (X_\infty)^* \iso (l^{2,\infty})^* \isomapsto l^{2,-\infty}, D \mapsto \{D(\phi_n)\}_{n\in\setZ}$ using the map of \thref{lem:l2s_isodual}\ref{lem:l2-inf_iso}. Then, in a similar fashion, we can restrict $\Phi_{-\infty}$ to isomorphisms $\Phi_s \eqdef \Phi_{-\infty}\restr{X_s}: X_s \isomapsto l^{2,s}$, with $X_s \eqdef \Phi_{-\infty}^{-1}(l^{2,s})$, and pull the inner product of $l^{2,s}$ back to $X_s$, $s\in\setZ$. By construction, $\scf{X} \eqdef \{X_s\}_{s\in\setZ}$ is a Hilbert scale and $\scf{\Phi} \eqdef \{\Phi_s\}_{s\in\setZ}: \scf{X} \isomapsto \scf{l^2}$ is an isometric isomorphism of scales. Consequently, all properties of $\scf{l^2}$ carry directly over to $\scf{X}$. The following proposition reveals some of these properties.

\begin{proposition} \thlabel{prop:separable_hilbert_scale}
	Let $X$ be a separable infinite\hyp{}dimensional Hilbert space with orthonormal basis $\{\phi_k\}_{k\in\setZ}$ and corresponding isometric isomorphism $\Phi: X \isomapsto l^2$. Furthermore, let $\nu \in \setRppos^\setZ$ as in \thref{prop:l2s_scale}, $\scf{X} = \{X_s\}_{s\in\setZ}$ be the corresponding induced Hilbert scale with isometric isomorphism of scales $\scf{\Phi}: \scf{X} \isomapsto \scf{l^2}$. To ease notation, pull complex conjugation back to $X_s$, {\it i.e.}, define $\conj{(\cdot)}: X_s \mapsto X_s,\, v \mapsto \Phi_s^{-1}(\conj{\Phi_s v})$, $s\in\setZ$. Then the following holds:
	\begin{enumerate}
		\item For $s\geq 0$, we can make the identification
			\begin{equation} \label{eq:hs_sgeq0}
				X_s \iso \Phi^{-1}(l^{2,s}) = \{v\in X: \|\Phi(v)\|_s < \infty\} \,.
			\end{equation}
		\item The limit of $\scf{X}$ is $\cap X_s \iso X_\infty$ as topological vector spaces ($D \isomapsto \sum_{n\in\setZ} D(\phi_n) \phi_n$) and the colimit is $X_{-\infty} \eqdef \cup X_s = X_{\infty}^*$.
		\item \label{prop:separable_hilbert_iso} We have a continuous pairing $X_s \times X_{-s} \mapsto \setF,\, (v,w) \mapsto \langle v, \conj{w} \rangle_0$. This map induces an isometric isomorphism $X_{-s} \isomapsto X_s^*$ for each $s\in\setZ$, and hence an isometric isomorphism of scales $\scf{X} \isomapsto \scf{X}^*,\, w \mapsto \langle \cdot, \conj{w} \rangle_0$.
	\end{enumerate}
\end{proposition}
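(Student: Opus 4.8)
The plan is to obtain all three assertions by transporting the corresponding facts about the prototype scale $\scf{l^2}$ --- \thref{prop:l2s_scale} and \thref{lem:l2s_isodual} --- along the isometric isomorphism of scales $\scf{\Phi}\colon \scf{X}\isomapsto\scf{l^2}$, invoking \thref{rmk:maps_scales} for the interaction of $\scale$-morphisms with limits, colimits and adjoints. Because $\scf{\Phi}$ is an \emph{isometric} $\scale$-isomorphism, none of the statements carries genuine analytic content beyond what is already in those two results; the only real work is to verify that the transported maps are given by the explicit formulas in the statement, which is a matter of tracking the several identifications in play --- principally the canonical Riesz embedding $X\hookrightarrow(X_\infty)^*$, $v\mapsto\langle\,\cdot\,,v\rangle$, used in the construction of $\scf{X}$, the pull-back conjugation $\conj{(\cdot)}$ on each $X_s$, and the convention that Hermitian products are conjugate-linear in the second slot.

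For (a), recall that by construction $X_s\eqdef\Phi_{-\infty}^{-1}(l^{2,s})\subset(X_\infty)^*$ with $\Phi_{-\infty}\colon(X_\infty)^*\iso(l^{2,\infty})^*\isomapsto l^{2,-\infty}$, $D\mapsto\{D(\phi_n)\}_n$. I would first note that the Riesz map $X\hookrightarrow(X_\infty)^*$, $v\mapsto\langle\,\cdot\,,v\rangle$, is injective since $X_\infty$ is dense in $X$ (\thref{prop:l2s_scale} transported), and that under $\Phi_{-\infty}$ it corresponds to the inclusion $l^{2,0}\hookrightarrow l^{2,-\infty}$ of \thref{lem:l2s_isodual}\ref{lem:l2-inf_iso}: indeed $\Phi_{-\infty}(\langle\,\cdot\,,v\rangle)=\{\langle\phi_n,v\rangle\}_n=\conj{\Phi(v)}$, whose weighted $l^{2,s}$-norm equals $\|\Phi(v)\|_s$. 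Hence for $s\geq 0$ the space $X_s=\Phi_{-\infty}^{-1}(l^{2,s})\subset\Phi_{-\infty}^{-1}(l^{2,0})\iso X_0\iso X$ is carried by $\Phi$ (up to the harmless conjugation) onto $l^{2,s}$, i.e. it is identified with $\{v\in X:\|\Phi(v)\|_s<\infty\}=\Phi^{-1}(l^{2,s})$, which is \eqref{eq:hs_sgeq0}.

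For (b), the limit and colimit maps $\scf{\Phi}_\infty\colon X_\infty\isomapsto l^{2,\infty}$ and $\scf{\Phi}_{-\infty}\colon X_{-\infty}\isomapsto l^{2,-\infty}$ are isomorphisms of topological vector spaces (\thref{rmk:maps_scales}), and since $l^{2,\infty}=\bigcap_s l^{2,s}$, $l^{2,-\infty}=\bigcup_s l^{2,s}$ by \thref{prop:l2s_scale}, pulling back gives $X_\infty\iso\bigcap_s X_s$ and $X_{-\infty}=\bigcup_s X_s$; in fact $\bigcup_s X_s=\bigcup_s\Phi_{-\infty}^{-1}(l^{2,s})=\Phi_{-\infty}^{-1}(l^{2,-\infty})=(X_\infty)^*$, which is the asserted identification $X_{-\infty}=X_\infty^*$. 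Chasing $\Phi_{-\infty}$ together with the inverse map of \thref{lem:l2s_isodual}\ref{lem:l2-inf_iso} then identifies the resulting isomorphism $X_\infty^*\isomapsto\bigcap_s X_s$ with $D\mapsto\sum_n D(\phi_n)\phi_n$.

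Finally, for (c) --- which is the substantive part --- I would transport \thref{lem:l2s_isodual}\ref{lem:l2s_cont}--\ref{lem:l2s_iso_expression}. The pairing $X_s\times X_{-s}\mapsto\setF$ defined via $\scf{\Phi}$ by $(v,w)\mapsto\langle\Phi_s v,\conj{\Phi_{-s}w}\rangle_0$ satisfies $|\langle v,\conj w\rangle_0|\leq\|v\|_s\|w\|_{-s}$ because $\scf{\Phi}$ is isometric; evaluating on smooth vectors $v,w\in X_\infty\subset X_0$ and using $\Phi_0\conj{(\cdot)}=\conj{\Phi_0(\cdot)}$ with the isometry $\Phi_0\colon X_0\isomapsto l^{2,0}$ shows this pairing agrees there with the $X_0$-pairing $\langle v,\conj w\rangle_0$, and the identity extends to all $v\in X_s$, $w\in X_{-s}$ by density and continuity. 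Isometry of the induced $X_{-s}\isomapsto X_s^*$ is immediate from \thref{lem:l2s_isodual}\ref{lem:l2s_iso_expression} and the isometry of $\scf{\Phi}$. To see that $w\mapsto\langle\,\cdot\,,\conj w\rangle_0$ is an isometric isomorphism of scales, note first that $\scf{X}^*$ exists because Hilbert spaces are reflexive (\thref{prop:scale_constructions}\ref{prop:dual_scale}), and then that the map in question is the composite $\scf{X}\xrightarrow{\scf{\Phi}}\scf{l^2}\xrightarrow{\ \sim\ }(\scf{l^2})^*\xrightarrow{\scf{\Phi}^*}\scf{X}^*$ of the isometric $\scale$-isomorphisms from \thref{lem:l2s_isodual}\ref{lem:l2s_iso_expression} and the adjoint $\scf{\Phi}^*$ (\thref{rmk:maps_scales}), hence is itself an isometric $\scale$-isomorphism; a one-line evaluation on $\delta_n$ (equivalently $\phi_n$) confirms it is the claimed explicit map. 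I expect the main obstacle throughout to be purely bookkeeping --- matching all the conjugations and the Riesz embedding so that the transported formulas read exactly as stated --- rather than any estimate, since every estimate has already been done in \thref{prop:l2s_scale} and \thref{lem:l2s_isodual}.
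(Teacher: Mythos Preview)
Your proposal is correct and follows essentially the same approach as the paper: both reduce the statement to the already-proven facts about $\scf{l^2}$ (\thref{prop:l2s_scale} and \thref{lem:l2s_isodual}) by transporting along the isometric $\scale$-isomorphism $\scf{\Phi}$. The paper's own proof is a two-sentence sketch (``composing obvious maps'' and invoking \thref{lem:l2s_isodual}), whereas you carry out the bookkeeping explicitly --- tracking the Riesz embedding $X\hookrightarrow(X_\infty)^*$, the pulled-back conjugation, and the adjoint $\scf{\Phi}^*$ --- but the underlying argument is the same.
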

\begin{proof}
	The proof boils down to composing obvious maps. For $s\geq 0$, $\Phi_s: X_s \isomapsto l^{2,s}$ and $\Phi\restr{\Phi^{-1}(l^{2,s})}: \Phi^{-1}(l^{2,s}) \isomapsto l^{2,s}$ are vector space isomorphisms which are isometries by construction, provided we pull the inner product of $l^{2,s}$ back to the corresponding spaces. The isomorphism for the limit is obtained using the limit map $\lim_{s\in\setZ} \Phi_s: \cap_{s\in\setZ} X_s \isomapsto l^{2,\infty}$ and $\Phi_\infty: X_\infty \isomapsto l^{2,\infty}$. For the remaining statements, one can use \thref{lem:l2s_isodual}.
\end{proof}

\begin{example} \thlabel{ex:sobolev_scale}
	For the separable complex Hilbert space
	\begin{equation}
		L^2(S^1, \setC) = \Big\{u: S^1 \mapsto \setC: u~\text{measurable and}~\int_{S^1} |u|^2 < \infty \Big\}
	\end{equation}
	with Fourier basis $\left\{x \mapsto \frac{\e^{\i kx}}{\sqrt{2\pi}}: k \in\setZ\right\}$, inner product
	\begin{equation}
		\langle u, v \rangle = \int_{S^1} u(x) \conj{v(x)} \,\d x \,,
	\end{equation}
	and sequence $\nu_n \eqdef (1+n^2)^{1/2}$, $n \in\setZ$, the induced Hilbert scale is the scale of Levi\hyp{}Sobolev spaces \cite{garrett13}
	\begin{equation}
		W^{s,2}(S^1, \setC) = \Big\{D \in C^\infty(S^1, \setC)^*: \sum_{n\in\setZ} \big| D\big(x \mapsto \tfrac{\e^{\i nx}}{\sqrt{2\pi}}\big)\big|^2 (1+n^2)^s < \infty \Big\}
	\end{equation}
	for $s\in\setZ$, with the smooth functions $C^\infty(S^1, \setC) = \lim_{k\in\setN} C^k(S^1, \setC)$ as limit and the distributions $C^\infty(S^1, \setC)^*$ as colimit. For $s\geq 0$, these spaces are simply
	\begin{equation}
		W^{s,2}(S^1, \setC) \iso \Big\{u \in L^2(S^1, \setC): \sum_{n\in\setZ} |{\hat u}_n|^2 (1+n^2)^s < \infty \Big\} \,,
	\end{equation}
	where
	\begin{equation}
		{\hat u}_n = \frac{1}{\sqrt{2\pi}} \int_{S^1} u(x) \e^{-\i nx} \,\d x
	\end{equation}
	is the $n\th$ Fourier coefficient of $u$ and where we identify $L^2$ functions with the subspace $W^{0,2} \subset (C^\infty)^*$ of distributions by sending $u \in L^2$ to $(\varphi \mapsto \sum_{n\in\setZ} {\hat u}_n {\hat \varphi}_n = \int_{S^1} u(x) \varphi(-x) \,\d x) \in W^{0,2}$.
\end{example}

\begin{remark} \thlabel{rmk:hilbert_scale_extension}
	Similarly to \thref{rmk:symplectic_scale_extension}, a single\hyp{}sided Hilbert scale which is isomorphic to $\scf{X}$ extends to a double\hyp{}sided scale, but now using the isometric isomorphism of \thref{prop:separable_hilbert_scale}\ref{prop:separable_hilbert_iso} induced by the inner product of $X$ instead of a symplectic structure. Clearly, the extended isomorphism is isometric if and only if the original isomorphism is so. Again, $\scf{X}$ is completely determined by its one\hyp{}sided structure $\scf{X}\restr\setNz$.
\end{remark}

Regarding linear symplectic geometry on Hilbert spaces, if $(X, \langle \cdot, \cdot \rangle)$ is a complex Hilbert space, it can be given the structure of a real Hilbert space $X_\setR = X$ with inner product $\langle \cdot, \cdot \rangle_\setR \eqdef \Re\{ \langle \cdot, \cdot \rangle \}$ in a way similar to the discussion prior to \thref{cor:l2s_complexiso}. With this structure, $\omega = -\Im\{ \langle \cdot, \cdot \rangle \}$ is a symplectic form on $X_\setR$, denoted the standard symplectic form. It is compatible with the inner product $\langle \cdot, \cdot \rangle_\setR$ and the complex structure $\i: X \mapsto X, \, v \mapsto \i v$, in the sense that $\omega = \langle \i \cdot, \cdot \rangle_\setR$.

In the same way, if $X$ is a separable infinite\hyp{}dimensional complex Hilbert space and $\scf{X} = \{X_s\}_{s\in\setZ}$ is the construction of \thref{prop:separable_hilbert_scale}, we can regard each $X_s$ as a real Hilbert space $(X_s)_\setR$. Thereby, we obtain isometric ($\setR$\hyp{}linear) isomorphisms $\Phi_s: (X_s)_\setR \isomapsto (l_\setC^{2,s})_\setR$, $s\in\setZ$, which give rise to a scale structure (over $\setR$) on $\{(X_s)_\setR\}_{s\in\setZ}$. By abuse of notation, we also denote this restriction\hyp{}of\hyp{}scalars scale by $\scf{X}$. By \thref{cor:l2s_complexiso}, the map $\langle \cdot, \cdot\rangle_{0,\setR} \eqdef \Re\{\langle \cdot, \cdot\rangle_0\}: (X_s)_\setR \times (X_{-s})_\setR \mapsto \setR, \, (v,w) \mapsto \Re\{\langle v, w \rangle_0\}$ induces an isomorphism of scales, hence $\omega = \langle \i \cdot, \cdot\rangle_{0,\setR} = -\Im\{ \langle \cdot, \cdot \rangle_0 \}$ furnishes $\scf{X}$ with a symplectic structure, denoted the standard symplectic structure on the scale $\scf{X}$ induced by $X$.

\begin{example} \thlabel{ex:l2s_sobolev_sympl}
	For the prototypical case $X = l^2_\setC$, the previous construction endows $\{(l_\setC^{2,s})_\setR\}_{s\in\setZ}$ with the symplectic structure
	\begin{equation} \label{eq:symplectic_l2s}
		l_\setC^{2,s} \times l_\setC^{2,-s} \mapsto \setR, \, (x,y) \mapsto -\Im\left\{ \sum_{n\in\setZ} x_n \conj{y_n} \right\}
	\end{equation}
	which pulls back for $N\in\setN$ via the embedding $\setC^{2N-1} \hookrightarrow l_\setC^{2,\infty}$
	\begin{equation}
		x = (x_{-N+1}, x_{-N+2}, \ldots, x_{N-1}) \mapsto \begin{cases} x_n & 
		\txt{if}~|n|<N \\ 0 & \txt{otherwise} \end{cases}
	\end{equation}
	to the standard symplectic form $\frac{\i}{2} \sum_{n=-N+1}^{N-1} \d z_n \wedge \d \conj{z_n}$ on $\setC^{2N-1}$. For \thref{ex:sobolev_scale}, one pre\hyp{}composes \eqref{eq:symplectic_l2s} with the Fourier transform $W^{s,2} \isomapsto l_\setC^{2,s},\, D \mapsto \big\{D\big(x \mapsto \frac{e^{\i nx}}{\sqrt{2\pi}}\big)\big\}_{n\in\setZ}$.
\end{example}

\section{Hofer-Wysocki-Zehnder Scale Smoothness} \label{sec:hwz}
With the necessary working tools for Banach scales in our pockets, our next step is to introduce the notion of $\scale$\hyp{}smoothness developed by Hofer, Wysocki and Zehnder \cite{hofer10,hofer06,hofer08_polyfolds,hofer17}. This notion differs from classical Frèchet smoothness in that it utilizes several layers of regularity to allow for densely\hyp{}defined derivatives and maps which ``loose regularity'' in their infinitesimal form. In a way similar to Banach manifolds, this notion also gives rise to a manifold structure by endowing a topological space with a (maximal) atlas whose transition maps are $\scale$\hyp{}smooth diffeomorphisms. Unless otherwise mentioned, in this section all Banach scales are assumed to be over the real numbers and with index set $\setNz$.

\subsection{Scale Calculus} \label{sec:sc_calculus}
To put the new notion in context, we first recall the classical notion of smoothness between Banach spaces. Let $X$ and $Y$ be real Banach spaces and let $U\subset X$ be open. A function $f: U \mapsto Y$ is said to be Frèchet differentiable if there exists a function $\d f: U \mapsto \B(X,Y)$ with
\begin{equation}
	\lim_{h\to 0} \frac{\|f(x+h) - f(x) - \d f(x)\cdot h\|_{Y}}{\|h\|_{X}} = 0
\end{equation}
for all $x\in U$, where $h\in X$ converges to 0 in $X$. As usual, Frèchet differentiable functions are continuous. Since $\B(X,Y)$ has the structure of a Banach space by itself, one can iterate the concept using $\d f$. Hence, the function $f$ is said to be $C^1$ if it is differentiable and $\d f$ is $C^0$ (continuous) and we define, recursively, $f$ to be $C^{k+1}$ if $\d f$ is $C^k$, $k\geq 1$. The function $f$ is then said to be $C^\infty$ or smooth if it is $C^k$ for all $k\in\setN$. For $X = \setR^m$ and $Y = \setR^n$, $m,n\in\setN$, this notion recovers standard multivariate calculus.

Before proceeding, recall from \thref{rmk:maps_scales}\ref{rmk:inf_in_scale} that if $X$ is a Banach scale on $\setNz$ and $U_0\subset X_0$ is an open subset, $U_0$ induces an open system of subsets $U\subset X$ given by $U_k \eqdef U_0 \cap X_k$, $k\in \setNz$. If $Y$ is another Banach scale on $\setNz$, a map of scales $f: U \mapsto Y$ can be seen as a map $f_0: U_0 \mapsto Y_0$ such that $f_0(U_k) \subset Y_k$ for all $k\in \setNz$. We use this configuration in what follows, always regarding scales, open systems of subsets and maps as their zeroth layers: $X \equiv X_0$, $U \equiv U_0$, $(f: U \mapsto Y) \equiv (f_0: U_0 \mapsto Y_0)$, and so on. We also recall that for $k\in \setNz$, $U^k = \{U_{k+m}\}_{m\in \setNz}$ is the $k$\hyp{}shifted system of subsets. Clearly, it is an open system of subsets of the shifted scale $X^k = \{X_{k+m}\}_{m\in \setNz}$ which is induced by $U_k = (U^k)_{0}$.

\begin{definition} \thlabel{def:sc1}
	Let $X$ and $Y$ be Banach scales (over the reals and on $\setNz$), and $U \subset X$ be open.
	\begin{enumerate}
		\item \label{def:linear_tgbundle} The tangent bundle of $U$ is defined as $\Tg U \eqdef U^1 \times X = \{U_{m+1} \times X_m\}_{m\in \setNz}$. It is an open system of subsets $\Tg U \subset \Tg X = X^1 \times X$ induced by $U_1 \times X_0$.
		\item \label{def:sc1_defn} An $\scale^0$ map $f: U \mapsto Y$ is said to be $\scale^1$ if there exists an $\scale^0$ map $\D f: \Tg U \mapsto Y$ which is linear in the second argument and such that
	\begin{equation} \label{eq:frechet_condition}
		\lim_{h\to 0} \frac{\|f(x+h) - f(x) - \D f(x, h)\|_{Y_0}}{\|h\|_{X_1}} = 0
	\end{equation}
	for all $x\in U_1$, where $h\in X_1$ converges to 0 in $X_1$. We use the notation $\D_x f \eqdef \D f(x,\cdot) \in \B(X_m,Y_m)$ for $x\in U_{m+1}$, $m\in \setNz$.
		\item \label{def:sc1_tgmap} For an $\scale^1$ map $f: U \mapsto Y$, we define its tangent map $\Tg f: \Tg U \mapsto \Tg Y, \, (x,v) \mapsto (f(x), \D f(x, v))$, which is clearly $\scale^0$.
		\item For $k\in \setNz$, $f$ is recursively defined to be $\scale^{k+1}$ if $\Tg f$ is $\scale^k$. In that case, with $\Tg^{k+1} U \eqdef \Tg^k(\Tg U)$ and $\Tg^{k+1} Y \eqdef \Tg^k(\Tg Y)$, the $(k+1)\st$ tangent map of $f$ is defined as $\Tg^{k+1} f \eqdef \Tg^k(\Tg f): \Tg^{k+1} U \mapsto \Tg^{k+1} Y$.		
		\item The map $f$ is said to be $\scale^\infty$ or $\scale$\hyp{}smooth if it is $\scale^k$ for every $k\in \setNz$.
		\item For $U\subset X$ and $V\subset Y$ open and $k\in \setN\cup\{\infty\}$, a bijective scale map $f: U \mapsto V$ is an $\scale^k$\hyp{}diffeomorphism if both $f: U \mapsto V \subset Y$ and $f^{-1}: V \mapsto U \subset X$ are $\scale^k$. For $k=0$, we adopt the terminology $\scale^0$\hyp{}homeomorphism instead.
	\end{enumerate}
\end{definition}
\begin{remark}
	\begin{enumerate}
		\item Re\hyp{}interpreting \eqref{eq:frechet_condition}, we see that in fact we require $f\restr{U_1}: U_1 \mapsto Y_0$ to be Frèchet differentiable with derivative $(\D_x f)\restr{X_1} \in \B(X_1,Y_0)$ for all $x\in U_1$. As mentioned in the following \thref{prop:sc1_alternative}, the definition of an $\scale^1$ map actually implies that $f\restr{U_1}$ is $C^1$.
		\item If we endow each $\B(X_m, Y_m)$ with the compact\hyp{}open topology, we can re\hyp{}interpret the $\scale^0$ condition on $\D f$. We have that $\D f$ is $\scale^0$ if and only if $U_{m+1} \mapsto \B(X_m, Y_m)$, $x \mapsto \D_x f$ is continuous for all $m \in \setNz$.
		\item The tangent map $\Tg f$ is defined even if $f$ is only assumed to be scale\hyp{}differentiable: to define the latter, simply replace ``$\scale^0$ map $\D f$'' by ``scale map $\D f$'' in \thref{def:sc1}\ref{def:sc1_defn}. In that case, $\Tg f$ is $\scale^0$ if and only if $\D f$ is so. Actually, \thref{cor:tf_df_sck} below will prove that $\Tg f$ is $\scale^k$ if and only if $\D f$ is so, $k \in \setNz$.
	\end{enumerate}
\end{remark}

Note that due to the density of $X_1$ in $X_0$, if the $\scale$\hyp{}derivative $\D f$ exists it is unique, and therefore the $\scale^k$ conditions are local, meaning that $f: U \mapsto Y$ is $\scale^k$ if and only if for each $x\in U$, there exists an open neighbourhood $V(x) \subset U$ such that $f\restr{V}: V \mapsto Y$ is $\scale^k$. For notational convenience later on, we define $\scale^k(U,Y) \eqdef \{\scale^k~\txtt{maps}~U \mapsto Y\}$ for $k\in \setNz$.

Hofer et al.~examine several properties of $\scale$\hyp{}smoothness and derive results relating this notion with classical Frèchet differentiability \cite{hofer10,hofer06,hofer08_polyfolds}. Here, we state an equivalent formulation of the $\scale^1$ condition (\thref{def:sc1}\ref{def:sc1_defn}) and present two additional results which make $\scale$\hyp{}smoothness compatible with scale shifting and composition of maps. The first result is of more technical nature, whereas the last two are more essential. Especially the last result is crucial when introducing an $\scale$\hyp{}smooth structure on a topological space so that differentiation behaves well while changing coordinate charts (Section~\ref{sec:sc_manifolds}).
\begin{proposition}[in {\cite[Proposition~2.1]{hofer10}}] \thlabel{prop:sc1_alternative}
	Let $X$ and $Y$ be Banach scales and $U\subset X$ be open. Then an $\scale^0$ map $f: U \mapsto Y$ is $\scale^1$ if and only if for every $m \in \setNz$, the following holds:
	\begin{enumerate}
		\item \label{prop:sc1_alternative_c1} $f\restr{U_{m+1}}: U_{m+1} \mapsto Y_m$ is $C^1$.
		\item \label{prop:sc1_alternative_ext} For $x\in U_{m+1}$, $\d(f\restr{U_{m+1}})(x) \in \B(X_{m+1}, Y_m)$ extends to a continuous operator $\bar{\d}(f\restr{U_{m+1}})(x) \in \B(X_m, Y_m)$. Equivalently, the former linear map is continuous when $X_{m+1}$ inherits the topology of $X_m$.
		\item \label{prop:sc1_alternative_cont} the map $(\D f)_m: U_{m+1} \times X_m \mapsto Y_m, \, (x,\xi) \mapsto \bar{\d}(f\restr{U_{m+1}})(x)\cdot\xi$ is continuous.
	\end{enumerate}
	If these conditions hold, the $\scale$\hyp{}derivative $\D f: U^1 \times X \mapsto Y$ is simply $\{(\D f)_m\}_{m\in \setNz}$.	
\end{proposition}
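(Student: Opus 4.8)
The plan is to prove the two implications separately. The reverse direction ``\ref{prop:sc1_alternative_c1}--\ref{prop:sc1_alternative_cont} $\Rightarrow$ $\scale^1$'' is essentially a bookkeeping exercise: assuming the three conditions at every level $m\in\setNz$, I would set $(\D f)_m(x,\xi) \eqdef \bar{\d}(f\restr{U_{m+1}})(x)\cdot\xi$ and check that $\D f \eqdef \{(\D f)_m\}_{m\in\setNz}$ is the required $\scale$-derivative. Linearity in the second argument is clear and $\scale^0$-continuity is precisely \ref{prop:sc1_alternative_cont}. The compatibility requirement of \thref{def:maps_scales} --- that $(\D f)_r$ restricts to $(\D f)_s$ for $s>r$ --- holds because the classical Fréchet derivatives $\d(f\restr{U_{r+1}})(x)$ and $\d(f\restr{U_{s+1}})(x)$ agree on $X_{s+1}\supset X_\infty$, which is dense in $X_r$, so their continuous extensions agree on all of $X_s$. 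Finally, \eqref{eq:frechet_condition} at level $0$ is nothing but Fréchet differentiability of $f\restr{U_1}\colon U_1\to Y_0$, which is contained in \ref{prop:sc1_alternative_c1} for $m=0$; uniqueness of the $\scale$-derivative (density of $X_1$ in $X_0$) then forces $\D f=\{(\D f)_m\}$.

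\noindent For the forward direction ``$\scale^1$ $\Rightarrow$ \ref{prop:sc1_alternative_c1}--\ref{prop:sc1_alternative_cont}'', condition \ref{prop:sc1_alternative_cont} is immediate ($\scale^0$-continuity of $(\D f)_m$), and \ref{prop:sc1_alternative_ext} will drop out of \ref{prop:sc1_alternative_c1} once the derivative of $f\restr{U_{m+1}}$ is identified as the restriction of $\D_x f\in\B(X_m,Y_m)$. I would first handle level $0$: the definition of $\scale^1$ gives that $f\restr{U_1}\colon U_1\to Y_0$ is Fréchet differentiable with derivative $\D_x f\circ\iota^X_{1,0}\in\B(X_1,Y_0)$, and the assignment $x\mapsto \D_x f\circ\iota^X_{1,0}$ is \emph{operator-norm} continuous. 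This last point is where compactness of the bonding map $\iota^X_{1,0}$ enters: it upgrades the joint continuity of $(\D f)_0$ (equivalently, continuity into $\B(X_0,Y_0)$ with the compact-open topology) to norm-continuity after precomposition with a compact operator. Hence $f\restr{U_1}$ is $C^1$, and the fundamental theorem of calculus yields
\begin{equation*}
	f(x+h)-f(x)=\int_0^1 \D f(x+th,\,h)\,\d t
\end{equation*}
in $Y_0$ whenever the segment from $x$ to $x+h$ lies in $U_1$.

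\noindent The step of boosting this to level $m$ is, I expect, the heart of the matter. For $x\in U_{m+1}$ and $h\in X_{m+1}$ small the integrand $t\mapsto \D f(x+th,h)$ takes values in $Y_m$ (since $\D f$ is a scale map) and is continuous into $Y_m$ ($\scale^0$-continuity of $(\D f)_m$), so by continuity of the inclusion $Y_m\hookrightarrow Y_0$ the displayed identity holds verbatim in $Y_m$. Subtracting $\D f(x,h)=\int_0^1\D f(x,h)\,\d t$ and estimating the integral by its supremum gives
\begin{equation*}
	\|f(x+h)-f(x)-\D f(x,h)\|_{Y_m}\le\Big(\sup_{t\in[0,1]}\big\|\D_{x+th}f\circ\iota^X_{m+1,m}-\D_x f\circ\iota^X_{m+1,m}\big\|_{\B(X_{m+1},Y_m)}\Big)\,\|h\|_{X_{m+1}}\,,
\end{equation*}
and the supremum tends to $0$ as $h\to 0$ in $X_{m+1}$ because $x\mapsto\D_x f\circ\iota^X_{m+1,m}$ is norm-continuous --- once more by compactness of $\iota^X_{m+1,m}$ together with $\scale^0$-continuity of $(\D f)_m$. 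This shows $f\restr{U_{m+1}}\colon U_{m+1}\to Y_m$ is Fréchet differentiable with derivative $\D_x f\circ\iota^X_{m+1,m}$, extending to $\D_x f\in\B(X_m,Y_m)$, which is \ref{prop:sc1_alternative_ext}; the just-established norm-continuity of this derivative gives the $C^1$ statement \ref{prop:sc1_alternative_c1}; and the identification $(\D f)_m=\{(Df)_m\}$ follows from uniqueness of the $\scale$-derivative.

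\noindent I expect the main obstacle to be exactly this forward direction: transporting the level-$0$ increment formula to level $m$ and recognizing that compactness of the bonding maps is precisely what converts the merely jointly-continuous (compact-open) dependence of $\D f$ on the base point into the operator-norm continuity required by the classical $C^1$ statements. The reverse direction and all density/uniqueness arguments are routine once the conventions on systems of subsets and scale maps from Section~\ref{sec:banach_scales} are in place.
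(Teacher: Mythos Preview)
The paper does not supply its own proof of this proposition: it is stated with the attribution ``in \cite[Proposition~2.1]{hofer10}'' and no proof environment follows. Your sketch is correct and reproduces the standard argument given in the cited reference --- in particular the crucial forward-direction step, where compactness of the bonding map $\iota^X_{m+1,m}$ converts joint continuity of $(\D f)_m$ (equivalently, compact-open continuity of $x\mapsto \D_x f$) into operator-norm continuity of $x\mapsto \D_x f\circ\iota^X_{m+1,m}$, is exactly Hofer--Wysocki--Zehnder's mechanism.

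One minor point of wording: when you transport the integral identity from $Y_0$ to $Y_m$, the reason is not continuity of $Y_m\hookrightarrow Y_0$ but its \emph{injectivity}: the $Y_m$-valued Riemann integral $\int_0^1 \D f(x+th,h)\,\d t$ exists (integrand continuous into $Y_m$), and it agrees with the $Y_0$-valued integral under the inclusion, hence equals $f(x+h)-f(x)$ already as an element of $Y_m$. This is what you mean, but the phrasing could be tightened.
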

\begin{remark}
	If an $\scale^0$ map $f: U \mapsto Y$ is such that each $f\restr{U_m}: U_m \mapsto Y_m$ is $C^1$, $m\in\setNz$ then, since the bonding maps $U_{m+1} \hookrightarrow U_m$ are restrictions of linear continuous maps, we see that the conditions of \thref{prop:sc1_alternative} are fulfilled, where the extension $\bar{\d}(f\restr{U_{m+1}})(x)$ is simply the original derivative $\d(f\restr{U_m}: U_m \mapsto Y_m)(x)$. In particular, we thus observe that scale maps which are $C^1$ on each layer are $\scale^1$.
\end{remark}

\begin{proposition}[in {\cite[Proposition~2.2]{hofer10}}] \thlabel{prop:shift_scsmooth}
	Let $X$ and $Y$ be Banach scales, $U\subset X$ be open, and $k\in\setN$. If $f: U \mapsto Y$ is $\scale^k$, then $f\restr{U_1}: U^1 \mapsto Y^1$ is $\scale^k$ and $\D(f\restr{U_1}) = (\D f)\restr{\Tg U_1}: \Tg U^1 \mapsto Y^1$.
\end{proposition}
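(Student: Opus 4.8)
The plan is to prove the statement by induction on $k$, using \thref{prop:sc1_alternative} to handle the base case $k=1$ and the recursive definition of $\scale^{k+1}$ (via tangent maps) for the inductive step. Throughout, the key observation is purely set-theoretic: the shifted scale $X^1$ has $(X^1)_m = X_{m+1}$, the shifted system of subsets $U^1$ has $(U^1)_m = U_{m+1}$, and the shifted tangent bundle satisfies $\Tg(U^1) = (U^1)^1 \times X^1 = (U^2)\times X^1$, which coincides layer-by-layer with the $1$-shift $(\Tg U)^1$ of $\Tg U = U^1 \times X$. So ``shifting by $1$'' and ``forming the tangent bundle'' commute, which is exactly what makes the inductive step go through.

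For the base case $k=1$: assume $f\colon U \mapsto Y$ is $\scale^1$. By \thref{prop:sc1_alternative}, for every $m\in\setNz$ the restriction $f\restr{U_{m+1}}\colon U_{m+1}\mapsto Y_m$ is $C^1$, the derivative extends continuously from $X_{m+1}$ to $X_m$, and $(\D f)_m\colon U_{m+1}\times X_m \mapsto Y_m$ is continuous. Now I want to show $f\restr{U_1}\colon U^1\mapsto Y^1$ is $\scale^1$, i.e.\ I must verify conditions (a)–(c) of \thref{prop:sc1_alternative} for $g \eqdef f\restr{U_1}$ with the scales $X^1, Y^1$ in place of $X,Y$. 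But condition (a) for $g$ at level $m$ asks that $g\restr{(U^1)_{m+1}} = f\restr{U_{m+2}}\colon U_{m+2}\mapsto Y_{m+1}$ be $C^1$ — this is condition (a) for $f$ at level $m+1$. Likewise conditions (b) and (c) for $g$ at level $m$ are exactly conditions (b) and (c) for $f$ at level $m+1$. Hence $g$ is $\scale^1$, and the final sentence of \thref{prop:sc1_alternative} identifies $\D g = \{(\D f)_{m+1}\}_{m\in\setNz}$, which is precisely $(\D f)\restr{\Tg U_1}$ restricted to $\Tg U^1$. This proves the case $k=1$.

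For the inductive step, suppose the proposition holds for some $k\in\setN$ and let $f\colon U\mapsto Y$ be $\scale^{k+1}$. By definition this means $\Tg f\colon \Tg U \mapsto \Tg Y$ is $\scale^k$. Applying the inductive hypothesis to $\Tg f$ gives that $(\Tg f)\restr{(\Tg U)_1}\colon (\Tg U)^1 \mapsto (\Tg Y)^1$ is $\scale^k$. Using the commutation of shift and tangent bundle noted above, $(\Tg U)^1 = \Tg(U^1)$ and $(\Tg Y)^1 = \Tg(Y^1)$, and one checks directly from the formula $\Tg f(x,v) = (f(x),\D f(x,v))$ that $(\Tg f)\restr{(\Tg U)_1}$ is exactly $\Tg(f\restr{U_1})$ — this uses the $k=1$ case already established, namely that $\D(f\restr{U_1}) = (\D f)\restr{\Tg U_1}$. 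Therefore $\Tg(f\restr{U_1})$ is $\scale^k$, which by definition says $f\restr{U_1}\colon U^1\mapsto Y^1$ is $\scale^{k+1}$, completing the induction. The derivative identity $\D(f\restr{U_1}) = (\D f)\restr{\Tg U_1}$ is inherited from the base case since the $\scale$-derivative is unique (by density of $X_2$ in $X_1$).

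The main obstacle is bookkeeping rather than mathematics: one must be careful and explicit about the identifications $(\Tg U)^1 = \Tg(U^1)$ and, in the inductive step, $\Tg^{k}(\Tg U)^1 = \Tg^k\bigl(\Tg(U^1)\bigr)$, since the definitions of $\Tg$ and of the shift both manipulate the index set $\setNz$ and one wants to be sure they interact as claimed; once these identifications are pinned down, everything reduces to relabeling levels $m \leftrightarrow m+1$ and invoking \thref{prop:sc1_alternative}. A secondary point worth stating carefully is that $f\restr{U_1}$ is again a genuine map of scales in the sense of \thref{def:maps_scales} — this is immediate because restricting $f$ to $U_1$ just drops the bottom layer and keeps the commuting squares.
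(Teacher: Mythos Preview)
The paper does not supply its own proof of this proposition; it simply cites \cite[Proposition~2.2]{hofer10} and moves on. So there is nothing in the paper to compare against directly.

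That said, your argument is correct and is the natural one. The base case via \thref{prop:sc1_alternative} is exactly right: the three conditions for $f\restr{U_1}$ on the shifted scales at level $m$ are literally the three conditions for $f$ at level $m+1$, and the final sentence of \thref{prop:sc1_alternative} pins down the derivative as claimed. The inductive step is also sound; the identification $(\Tg U)^1 = \Tg(U^1)$ holds layerwise as you noted, and the equality $(\Tg f)\restr{(\Tg U)_1} = \Tg(f\restr{U_1})$ follows from the already-established derivative identity in the $k=1$ case (which applies since $\scale^{k+1}$ implies $\scale^1$). Your closing remarks about the bookkeeping being the only real obstacle are accurate. This is essentially the argument one finds in Hofer--Wysocki--Zehnder's original treatment.
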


\begin{proposition}[Chain Rule {\cite[Theorem~2.16]{hofer06}}] \thlabel{prop:chain_scsmooth}
	Let $X$, $Y$ and $Z$ be Banach scales, $U\subset X$ and $V\subset Y$ be open, and $k\in\setN$. Assume we are given two $\scale^k$ maps $f: U \mapsto Y$ and $g: V \mapsto Z$ with $f(U) \subset V$. Then $g\circ f: U \mapsto Z$ is $\scale^k$ and
	\begin{equation}
		\Tg^m (g\circ f) = (\Tg^m g) \circ (\Tg^m f)
	\end{equation}
	for all $1\leq m \leq k$.
\end{proposition}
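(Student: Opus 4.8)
The plan is to prove the case $k=1$ directly and then lift it to all $k\ge 1$ by induction on tangent maps. For $k=1$, I would take $\Lambda\eqdef\D g\circ\Tg f\colon\Tg U\to Z$, i.e.\ $\Lambda(x,h)=\D g(f(x),\D f(x,h))$, as the candidate $\scale$-derivative of $g\circ f$: since $f(U_{m+1})\subset V_{m+1}$ and $\D f$ maps $U_{m+1}\times X_m$ into $Y_m$, $\Lambda$ is a well-defined scale map; it is $\scale^0$ as a composite of the $\scale^0$ maps $\Tg f\colon\Tg U\to\Tg V$ and $\D g\colon\Tg V\to Z$; and it is linear in the second argument as a composite of the linear maps $h\mapsto\D f(x,h)$ and $w\mapsto\D g(f(x),w)$. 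By \thref{def:sc1}\ref{def:sc1_defn} it then remains only to verify the Fréchet estimate \eqref{eq:frechet_condition} for $g\circ f$ with $\D(g\circ f)=\Lambda$; granting that, the identity $\Tg(g\circ f)=\Tg g\circ\Tg f$ is immediate from \thref{def:sc1}\ref{def:sc1_tgmap}.

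To verify the estimate, fix $x\in U_1$ and take $h\in X_1$ near $0$, so that $x+h\in U_1$ and, setting $k\eqdef f(x+h)-f(x)\in Y_1$, one has $k\to 0$ in $Y_1$ by $\scale^0$-continuity of $f$ at level $1$, whence for $h$ small the segment $\{f(x)+tk:t\in[0,1]\}$ lies in $V_1$. Since $g\restr{V_1}\colon V_1\to Z_0$ is Fréchet differentiable (\thref{def:sc1}\ref{def:sc1_defn}) with $\d(g\restr{V_1})(y)\cdot l=\D g(y,l)$, and $t\mapsto\D g(f(x)+tk,k)$ is continuous by joint continuity of $\D g$, the fundamental theorem of calculus gives $g(f(x+h))-g(f(x))=\int_0^1\D g(f(x)+tk,k)\,\d t$ in $Z_0$; subtracting $\Lambda(x,h)$ and using linearity of $\D g$ in its second slot,
\begin{multline} \label{eq:chainrule_split}
	g(f(x+h))-g(f(x))-\Lambda(x,h) \\ =\int_0^1\big[\D g(f(x)+tk,k)-\D g(f(x),k)\big]\,\d t\;+\;\D g\big(f(x),\,k-\D f(x,h)\big).
\end{multline}
The last term of \eqref{eq:chainrule_split} is bounded in $Z_0$ by $\|\D g(f(x),\cdot)\|_{\B(Y_0,Z_0)}\,\|f(x+h)-f(x)-\D f(x,h)\|_{Y_0}=o(\|h\|_{X_1})$, straight from \eqref{eq:frechet_condition} for $f$. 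For the integral term I would show $\sup_{t\in[0,1]}\|\D g(f(x)+tk,k)-\D g(f(x),k)\|_{Z_0}=o(\|h\|_{X_1})$ by contradiction: a failure would produce $h_n\to 0$ in $X_1$ and $t_n\in[0,1]$ with $\|\D g(f(x)+t_nk_n,v_n)-\D g(f(x),v_n)\|_{Z_0}\ge c>0$, where $k_n\eqdef f(x+h_n)-f(x)$ and $v_n\eqdef k_n/\|h_n\|_{X_1}$; but \eqref{eq:frechet_condition} for $f$ gives $v_n=\D_xf\big(h_n/\|h_n\|_{X_1}\big)+o(1)$ in $Y_0$, and since $\{h_n/\|h_n\|_{X_1}\}$ is bounded in $X_1$ while the bonding map $X_1\hookrightarrow X_0$ is compact (\thref{def:banach_scales}), a subsequence makes $v_n\to w$ in $Y_0$ for some $w\in Y_0$; as also $f(x)+t_nk_n\to f(x)$ in $Y_1$ and $\D g\colon V_1\times Y_0\to Z_0$ is jointly continuous, both $\D g(f(x)+t_nk_n,v_n)$ and $\D g(f(x),v_n)$ converge to $\D g(f(x),w)$ in $Z_0$, contradicting the lower bound. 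This proves $g\circ f$ is $\scale^1$ with $\Tg(g\circ f)=\Tg g\circ\Tg f$.

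For the induction, assume the proposition for $\scale^k$ maps and let $f,g$ be $\scale^{k+1}$. Then $\Tg f\colon\Tg U\to\Tg Y$ and $\Tg g\colon\Tg V\to\Tg Z$ are $\scale^k$ maps between the Banach scales $\Tg X=X^1\times X$, $\Tg Y$, $\Tg Z$ (shifts and products of scales, \thref{prop:scale_constructions}) with $\Tg f(\Tg U)\subset\Tg V$, so the inductive hypothesis yields that $\Tg g\circ\Tg f$ is $\scale^k$ with $\Tg^m(\Tg g\circ\Tg f)=\Tg^m(\Tg g)\circ\Tg^m(\Tg f)$ for $1\le m\le k$. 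Since $\Tg g\circ\Tg f=\Tg(g\circ f)$ by the $k=1$ case, $\Tg(g\circ f)$ is $\scale^k$, i.e.\ $g\circ f$ is $\scale^{k+1}$; re-indexing $m\mapsto m+1$, using $\Tg^{m+1}=\Tg^m\circ\Tg$, and appending the $m=1$ identity gives $\Tg^m(g\circ f)=\Tg^m g\circ\Tg^m f$ for all $1\le m\le k+1$.

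I expect the only genuine obstacle to be the integral term of \eqref{eq:chainrule_split} in the $k=1$ step: in classical Banach calculus one closes this estimate using norm-continuity of $y\mapsto\d g(y)$, but here $\scale^0$-regularity of $\D g$ provides continuity only in a much weaker sense, and bridging that gap is exactly what the compactness of the bonding maps is for, as anticipated in the discussion following \thref{def:banach_scales}.
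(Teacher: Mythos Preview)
Your proof is correct and follows the same approach as the paper: the paper simply cites \cite[Theorem~2.16]{hofer06} for the $k=1$ case and then does the induction, whereas you have spelled out the $k=1$ argument in full (the FTC splitting plus the compactness-via-contradiction estimate for the integral term is precisely Hofer's argument, and the paper itself later recapitulates this same reasoning in the proof of \thref{prop:chain_rule_strong_sck}). Your induction step matches the paper's one-line induction verbatim.
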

\begin{proof}
	The proposition was proved in \cite{hofer06} for $k=1$. For $k>1$, induction using the definition of an $\scale^k$ map gives the result.
\end{proof}

As an application of these results, we derive basic properties of scale differentiation and prove $\scale$\hyp{}smoothness of some template maps. The properties in question are similar to the ones in traditional calculus, and so is their derivation.
\begin{lemma} \thlabel{lem:prod_scsmooth}
	Let $X$, $Y$, $Z$ and $W$ be Banach scales, $U\subset X$ and $V \subset Y$ be open, and $k\in\setN$. If $f:U \mapsto Z$ and $g: V \mapsto W$ are $\scale^k$, then $f\times g: U \times V \mapsto Z \times W$ is $\scale^k$ and $\D(f\times g) = \D f \times \D g: \Tg U \times \Tg V \iso \Tg(U\times V) \mapsto Z \times W$. Consequently, $\Tg(f\times g) = \Tg f \times \Tg g: \Tg U \times \Tg V \iso \Tg(U\times V) \mapsto \Tg(Z\times W) \iso \Tg Z \times \Tg W$.
\end{lemma}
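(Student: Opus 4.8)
The plan is to deduce everything from the chain rule (\thref{prop:chain_scsmooth}) together with the observation that $f \times g$ factors through a product of maps we already know to be $\scale^\infty$. First I would note that the projections $\pr^X \colon U \times V \to U$ and $\pr^Y \colon U \times V \to V$, viewed as scale maps (restrictions of continuous linear maps $X_0 \times Y_0 \to X_0$ and $X_0 \times Y_0 \to Y_0$), are $C^1$ — indeed affine — on each layer, hence $\scale^1$ by the remark following \thref{prop:sc1_alternative}, and since their tangent maps are again (essentially) projections of the same type, an easy induction shows they are $\scale^\infty$ with $\Tg \pr^X = \pr^{\Tg X}$ up to the canonical identification $\Tg(U \times V) \iso \Tg U \times \Tg V$ (which just rearranges the four factors $U_{m+1}, V_{m+1}, X_m, Y_m$). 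Likewise, the canonical inclusion $Z \times W \hookrightarrow Z \times W$ — or more precisely the pair of coordinate maps into the two factors — presents no difficulty; the point is simply that a product-valued map is a scale map iff both its components are, and is $\scale^k$ iff both components are, which follows directly from the layerwise formulation of \thref{prop:sc1_alternative} (a map into $Z_m \times W_m$ is $C^1$ iff both components are) and induction on $k$.

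Given that reduction, the core computation is short: $f \times g = (f \circ \pr^X) \times (g \circ \pr^Y)$ as a map $U \times V \to Z \times W$. By \thref{prop:chain_scsmooth}, $f \circ \pr^X$ and $g \circ \pr^Y$ are $\scale^k$, hence so is their product by the componentwise principle above. For the derivative formula, I would apply the chain rule to each component: $\D(f \circ \pr^X)(x,y;\xi,\eta) = \D f(\pr^X(x,y); \D\pr^X(x,y;\xi,\eta)) = \D f(x;\xi)$, since $\D \pr^X = \pr^X$ on the tangent level (a linear map is its own derivative). Assembling the two components yields $\D(f \times g)(x,y;\xi,\eta) = (\D f(x;\xi), \D g(y;\eta))$, which is exactly $\D f \times \D g$ under the identification $\Tg(U \times V) \iso \Tg U \times \Tg V$. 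The statement about tangent maps is then immediate by unwinding the definition $\Tg(f \times g)(x,y;\xi,\eta) = \big((f\times g)(x,y), \D(f \times g)(x,y;\xi,\eta)\big) = \big(f(x), g(y), \D f(x;\xi), \D g(y;\eta)\big)$, which matches $(\Tg f \times \Tg g)$ after the same rearrangement of factors.

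The only mildly delicate point — and what I would flag as the place to be careful rather than a genuine obstacle — is bookkeeping the canonical isomorphism $\Tg(U \times V) \iso \Tg U \times \Tg V$ consistently throughout, since at the level of explicit layers $\Tg U \times \Tg V = \{U_{m+1} \times X_m \times V_{m+1} \times Y_m\}$ whereas $\Tg(U \times V) = \{U_{m+1} \times V_{m+1} \times X_m \times Y_m\}$, so the identification is the evident transposition of the two middle factors, an $\scale$-isomorphism which is its own inverse. One should also record that this identification is natural enough to commute with the tangent-map construction, so that iterating it gives the higher-order version $\Tg^m(f \times g) = \Tg^m f \times \Tg^m g$ should it be needed later. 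Apart from this clerical care, the proof is a routine application of the chain rule and the layerwise characterization of $\scale^k$ maps.
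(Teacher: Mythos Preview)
Your argument is correct, but it takes a genuinely different route from the paper's. The paper proves the $\scale^1$ case by a direct Fr\'echet estimate,
\[
\frac{\|(f\times g)((x,y)+(t,s)) - (f\times g)(x,y) - (\D f\times\D g)((x,y),(t,s))\|_{Z_0\times W_0}}{\|(t,s)\|_{X_1\times Y_1}}
\]
bounded above by the sum of the two separate Fr\'echet quotients for $f$ and $g$, reads off $\Tg(f\times g)=\Tg f\times\Tg g$, and then bootstraps: assuming the lemma at level $k$, if $f,g$ are $\scale^{k+1}$ then $\Tg f,\Tg g$ are $\scale^k$, hence $\Tg(f\times g)=\Tg f\times\Tg g$ is $\scale^k$ by the induction hypothesis, so $f\times g$ is $\scale^{k+1}$.

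Your reduction to the chain rule via $f\times g=(f\circ\pr^X,\,g\circ\pr^Y)$ is valid, but note that the ``componentwise principle'' you invoke --- a map into $Z\times W$ is $\scale^k$ iff both coordinate projections are --- is essentially the diagonal product of \thref{lem:sc_properties}\ref{lem:diagonal_prod_scsmooth}, which the paper derives \emph{from} the present lemma rather than the other way around. Your sketch of its proof (layerwise via \thref{prop:sc1_alternative}, then induction) does go through independently: for the inductive step one splits $\Tg(Z\times W)=(Z\times W)^1\times(Z\times W)$ and applies the hypothesis twice more to peel off all four components $h_i\circ\pr_1$, $\D h_i$, which regroup into $\Tg h_1,\Tg h_2$. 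So there is no circularity, but you are effectively establishing a small auxiliary lemma en route. What your approach buys is modularity --- it shows the product lemma is a formal consequence of the chain rule once projections and the componentwise principle are in hand --- whereas the paper's one-line estimate is shorter and keeps this lemma logically prior to the diagonal product. Both proofs silently use that the factor-swap $\Tg(U\times V)\iso\Tg U\times\Tg V$ is an $\scale^0$-linear isomorphism whose tangent map is again a factor-swap, hence $\scale^\infty$ by its own self-contained bootstrap; you are right to flag this as the one place requiring care.
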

\begin{proof}
	For $(x,y) \in X_1 \times Y_1$, we have
	\begin{multline}
		\frac{\|f\times g((x,y) + (t, s)) - f\times g(x,y) - \D f \times \D g((x,y),(t,s))\|_{Z_0\times W_0}}{\|(t,s)\|_{X_1\times Y_1}} \\
		\leq \frac{\|f(x+t)-f(x)-\D f(x,t)\|_{Z_0}}{\|t\|_{X_1}} + \frac{\|g(y+s)-g(y)-\D g(y,s)\|_{W_0}}{\|s\|_{Y_1}} \,,
	\end{multline}
	and the latter vanishes as $t \to 0$ in $X_1$ and $s \to 0$ in $Y_1$. Furthermore, the tangent map is $\Tg(f\times g) = \Tg f \times \Tg g$ which is $\scale^0$. Finally, by induction, if the statement is assumed for some $k\in\setN$ and $f$ and $g$ are $\scale^{k+1}$, then $\Tg(f\times g) = \Tg f \times \Tg g$ is $\scale^k$ by the induction hypothesis, hence $f\times g$ is $\scale^{k+1}$. This inductive method of proving smoothness is called \emph{bootstrapping}.
\end{proof}

\begin{lemma} \thlabel{lem:sc_properties}
	Let $X$, $Y$, $Z$ and $W$ be Banach scales, $U\subset X$ and $V\subset Y$ be open, and $k\in\setN$.
	\begin{enumerate}
		\item \label{lem:incl_scsmooth} The inclusion $\iota: U \mapsto X$ is $\scale$-smooth and $\D_x \iota = \id_{X_0}$ for all $x\in U_1$.
		\item \label{lem:const_scsmooth} For $y\in Y_\infty$, the constant map $\txt{const}_y: U \mapsto Y, \, x \mapsto y$ is $\scale$-smooth and $\D_x(\txt{const}_y) = 0$ for all $x\in U_1$.
		\item \label{lem:lin_scsmooth} An $\scale^0$\hyp{}operator $L: X \mapsto Y$ is $\scale$-smooth and $\D_x L = L \in \B(X_0,Y_0)$ for all $x\in X_1$.
		\item \label{lem:bilin_scsmooth} An $\scale^0$ bilinear map $B: X \times Y \mapsto Z$ is $\scale$-smooth and $\D_{(x,y)} B \cdot (\xi, \eta) = B(\xi, y) + B(x, \eta)$ for all $(x,y)\in X_1 \times Y_1$ and $(\xi,\eta)\in X_0\times Y_0$.
		\item \label{lem:restr_scsmooth} (Coordinate Restriction) If $f: U \times V \mapsto Z$ is $\scale^k$ and $y\in V_\infty$, then $f(\cdot, y): U \mapsto Z$ is $\scale^k$ and $\D[f(\cdot,y)] = \D f(\cdot,y, \cdot, 0): \Tg U \mapsto Z$.
		\item \label{lem:diagonal_prod_scsmooth} (Diagonal Product) If $f: U \mapsto Y$ and $g: U \mapsto Z$ are $\scale^k$, then $f \diagprod g: U \mapsto Y \times Z, \, x \mapsto (f(x), g(x))$ is $\scale^k$ and $\D_x (f \diagprod g) = \D_x f \diagprod \D_x g$ for all $x\in U_1$.
		\item \label{lem:sck_linear} (Linearity of the $\scale$-Derivative) $\scale^k(U,Y)$ is a linear subspace of $\scale^0(U,Y)$ and $\D: \scale^1(U,Y) \mapsto \scale^0(\Tg U, Y), \, f \mapsto \D f$ is linear. Hence, $\Tg: \scale^k(U,Y) \mapsto \scale^{k-1}(\Tg U, \Tg Y)$ is linear.
		\item \label{lem:sc_leibniz} (Leibniz Rule) If $f: U \mapsto Y$ and $g: U \mapsto Z$ are $\scale^k$ and $B=\cdot: Y \times Z \mapsto W$ is $\scale^0$ bilinear, then $f\cdot g: U \mapsto W, \, x \mapsto f(x)\cdot g(x)$ is $\scale^k$ and $\D (f\cdot g)(x,\xi) = \D f(x,\xi) \cdot g(x) + f(x) \cdot \D g(x,\xi)$ for all $x\in U_1$ and $\xi\in X_0$.
	\end{enumerate}
\end{lemma}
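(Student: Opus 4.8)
The plan is the standard \emph{bootstrapping} scheme. For each item I would first verify the asserted smoothness at the level $\scale^1$ by hand, using the equivalent layer\hyp{}wise characterisation of \thref{prop:sc1_alternative}, and then lift it to $\scale^\infty$ by recognising the tangent map of the map in question as a map of the \emph{same} type, now on the shifted and product scales --- which are again Banach scales on $\setNz$ over $\setR$ by \thref{prop:scale_constructions} --- so that the statement already proved for $k-1$ closes an induction on $k$. I would prove the items in the order (a), (b), (c) (each self\hyp{}contained), then (d) (using (c)), then (g) (self\hyp{}contained), then (f) (from \thref{lem:prod_scsmooth}), and finally (e) and (h), which are applications of the chain rule \thref{prop:chain_scsmooth}.

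The $\scale^1$ verifications are routine. For (a)--(d) the restriction to level $m+1$ of the inclusion $\iota$, of $\txt{const}_y$, of an $\scale^0$\hyp{}operator $L$, and of a continuous bilinear $B$ is respectively a restricted bonding map, a constant, $L_m\restr{X_{m+1}}$, and a restricted continuous bilinear map; each is $C^1$ (bilinear maps being polynomial), each has derivative at a level\hyp{}$(m+1)$ point extending continuously to all of $X_m$ (to $\id_{X_m}$, to $0$, to $L_m$, and to $(\xi,\eta)\mapsto B_m(\xi,y)+B_m(x,\eta)$), and the resulting $(\D f)_m$ are jointly continuous; hence \thref{prop:sc1_alternative} gives $\scale^1$\hyp{}ness with the first derivatives as stated. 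For (g), linearity of $\D$ on $\scale^1$ maps is read directly off the defining limit \eqref{eq:frechet_condition} (triangle inequality), so $\scale^1(U,Y)$ is a linear subspace of $\scale^0(U,Y)$.

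For the inductive step one computes: $\Tg\iota$ is the inclusion $\Tg U\hookrightarrow\Tg X$; $\Tg(\txt{const}_y)=\txt{const}_{(y,0)}$ with $(y,0)\in(\Tg Y)_\infty$ (as $y\in Y_\infty$); $\Tg L=L^1\times L$, again an $\scale^0$\hyp{}operator; and $\Tg B$, pre\hyp{}composed with the $\scale^0$\hyp{}isomorphism $\Tg X\times\Tg Y\iso\Tg(X\times Y)$, is again an $\scale^0$ bilinear map (the key being that the tangent map of a bilinear map is bilinear in the ``doubled'' variables); also $\Tg(f+g)=\Tg f+\Tg g$. In each case the tangent map is $\scale^{k-1}$ by the inductive hypothesis for that item --- invoking, for (d), the chain rule together with (c) applied to the coordinate\hyp{}reshuffling isomorphism --- so the original map is $\scale^k$, and letting $k\to\infty$ gives $\scale$\hyp{}smoothness. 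For (f) it is cleanest to write $f\diagprod g=(f\times g)\circ(\Delta_X\restr{U})$, where $\Delta_X\colon X\to X\times X$ is the $\scale$\hyp{}smooth (by (c)) diagonal $\scale^0$\hyp{}operator and $f\times g$ is $\scale^k$ by \thref{lem:prod_scsmooth}, and apply \thref{prop:chain_scsmooth}. Likewise (e) and (h) follow from $f(\cdot,y)=f\circ(\iota\diagprod\txt{const}_y)$ and $f\cdot g=B\circ(f\diagprod g)$: the inner maps are $\scale$\hyp{}smooth by (a), (b), (d), (f), so \thref{prop:chain_scsmooth} makes the compositions $\scale^k$, and the stated derivative formulas fall out of the chain rule using $\D(\iota\diagprod\txt{const}_y)(x,\xi)=(\xi,0)$ and $\D(f\diagprod g)=\D f\diagprod\D g$.

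None of the pieces is hard; the work is entirely in the bookkeeping. The one point I would be careful about is the bootstrapping of (d): one must check that the tangent map of a continuous bilinear map, after the appropriate reshuffling of the doubled variables, is still continuous and bilinear, so that the induction can take over. More broadly, at every inductive step one has to re\hyp{}confirm that the shifted scale $X^1$, the product scale $X\times Y$ and the tangent scale $\Tg X=X^1\times X$ are still Banach scales on $\setNz$ over $\setR$ and that the projections and coordinate\hyp{}reshuffling maps among them are $\scale^0$\hyp{}operators, and one must keep the items in an admissible order (so that (c) precedes (d) and (f), and (a), (b), (d), (f) precede (e), (h)) to avoid any circular dependency. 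The ``main obstacle'' is thus organisational rather than mathematical.
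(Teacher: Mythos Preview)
Your proposal is correct and follows essentially the same bootstrapping-plus-factorisation strategy as the paper: verify $\scale^1$ for (a)--(d) directly, bootstrap by recognising each tangent map as a map of the same type, and derive (e), (f), (h) via the factorisations $f(\cdot,y)=f\circ(\iota\diagprod\txt{const}_y)$, $f\diagprod g=(f\times g)\circ\diag_X\restr{U}$, $f\cdot g=B\circ(f\diagprod g)$ and the chain rule. The only minor deviations are that the paper checks the Fr\'echet condition \eqref{eq:frechet_condition} directly rather than via \thref{prop:sc1_alternative}, proves (g) by factoring $f+g=a\circ(f\diagprod g)$ through vector-space addition rather than your direct triangle-inequality argument, and bootstraps (d) by viewing $\Tg B$ as a composition of the already-established constructions rather than as a bilinear map in the doubled variables; all of these are cosmetic differences.
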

\begin{proof}
	Beginning with \ref{lem:lin_scsmooth}, we have $\|L(x+h) - L(x) - \D L(x, h)\|_{Y_0} = 0$ for all $x,h\in X_1$ by linearity, hence the Frèchet condition is satisfied. The tangent map is given by $\Tg L = L\restr{X_1} \times L: X^1 \times X \mapsto Y^1 \times Y$, which is $\scale^0$. To prove smoothness, we again use bootstrapping: if $L$ is $\scale^k$ for $k\in\setN$, then $\Tg L$ is also $\scale^k$ by \thref{lem:prod_scsmooth,prop:shift_scsmooth}, whence $L$ is $\scale^{k+1}$. In other words, $\Tg L$ is as smooth as $L$ is. One can prove \ref{lem:incl_scsmooth} and \ref{lem:const_scsmooth} using the same methodology.
	
	For \ref{lem:diagonal_prod_scsmooth}, we note that $f \diagprod g = (f \times g) \circ \diag_X\restr{U}$, where $\diag_X: X \mapsto X \times X,\, x \mapsto (x,x)$ is the ($\scale^0$ linear) diagonal map, and use the chain rule (\thref{prop:chain_scsmooth}), \thref{lem:prod_scsmooth} and \thref{lem:sc_properties}\ref{lem:incl_scsmooth},\ref{lem:lin_scsmooth}. Similarly, regarding \ref{lem:sck_linear}, $\scale^k(U,Y)$ is an (additive) subgroup of $\scale^0(U,Y)$ and the derivative is additive, since for $\scale^k$ maps $f,g: U \mapsto Y$, $f+g$ factors as $a \circ (f \diagprod g)$, where $a: Y \times Y \mapsto Y$ is the ($\scale^0$ linear) vector space addition. Regarding \ref{lem:restr_scsmooth}, it is clear that $y\in V_\infty$ induces a scale map $f(\cdot, y)$, since $V_\infty \subset V_m$ for $m\in\setNz$ and $f$ is a scale map. This map factors as $f \circ (\id_U \diagprod \txt{const}_y)$.
	
	The Frèchet condition of \ref{lem:bilin_scsmooth} is satisfied, since for some $K>0$
	\begin{multline}
		\frac{\|B(x+t, y+s) - B(x,y) - B(t,y) - B(x,s)\|_{Z_0}}{\|(t,y)\|_{X_1\times Y_1}} \leq \\
		\leq K \left(\frac{1}{\|t\|_{X_1}} + \frac{1}{\|s\|_{Y_1}}\right)^{-1} \,,
	\end{multline}
	and the latter vanishes as $t \to 0$ in $X_1$ and $s \to 0$ in $Y_1$. Again, $\Tg B$ is as smooth as $B$ is, since it is a composition of above proven constructions derived from $B$. The Leibniz rule is an easy corollary stemming from $f\cdot g = B \circ (f \diagprod g)$ and the homogeneity of \ref{lem:sck_linear} is a corollary thereof with $B: \setF \times Y \mapsto Y$ given by the scalar multiplication of $Y$ and the maps $\txt{const}_\alpha: U \mapsto \setF$ for $\alpha\in\setF$ and $f: U \mapsto Y$.	
\end{proof}
\begin{corollary} \thlabel{cor:tf_df_sck}
	Let $X$ and $Y$ be Banach scales, $U\subset X$ be open, and $f: U \mapsto Y$ be an $\scale^1$ map. Then for $k \in \setNz$, $\Tg f$ is $\scale^k$ if and only $\D f$ is $\scale^k$.
\end{corollary}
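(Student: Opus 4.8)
The plan is to realise $\Tg f$ as a combination of the shifted map $f\restr{U_1}$, the derivative $\D f$, and canonical $\scale^0$-operators, and then to let the chain rule \thref{prop:chain_scsmooth}, the diagonal-product rule \thref{lem:sc_properties}\ref{lem:diagonal_prod_scsmooth}, and the shift rule \thref{prop:shift_scsmooth} do the work, with an induction on $k$ to cover the single point they do not settle directly. Concretely, since $\Tg Y = Y^1\times Y$ and $\Tg U = U^1\times X$ by definition, one has $\Tg f = \Phi_1 \diagprod \D f$, where $\Phi_1 = f\restr{U_1}\circ\pr_1 : \Tg U \mapsto Y^1$ and $\pr_1 : \Tg U \mapsto U^1$ is the base projection, while conversely $\D f = \pr_2\circ\Tg f$ with $\pr_2 : \Tg Y \mapsto Y$ the fibre projection; both $\pr_1$ and $\pr_2$ are (restrictions to open subsystems of) $\scale^0$-operators and hence $\scale$-smooth by \thref{lem:sc_properties}\ref{lem:lin_scsmooth}, and $f\restr{U_1}$ is $\scale^1$ by \thref{prop:shift_scsmooth}. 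The ``only if'' direction is then immediate: for $k\ge 1$, $\D f = \pr_2\circ\Tg f$ is $\scale^k$ by \thref{prop:chain_scsmooth} whenever $\Tg f$ is, and for $k = 0$ this is the remark following \thref{def:sc1}. Dually, if \emph{both} $\D f$ and $f\restr{U_1}$ are $\scale^k$ (with $k\ge 1$), then $\Phi_1$ is $\scale^k$ by \thref{prop:chain_scsmooth} and so $\Tg f = \Phi_1\diagprod\D f$ is $\scale^k$ by \thref{lem:sc_properties}\ref{lem:diagonal_prod_scsmooth}; hence the ``if'' direction reduces to the claim that $\D f$ being $\scale^k$ forces $f\restr{U_1}$ to be $\scale^k$.

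That claim I would prove by induction on $k$. For $k = 0$ it is again the remark after \thref{def:sc1}; for $k = 1$ it is trivial, because $f\restr{U_1}$ is $\scale^1$ by \thref{prop:shift_scsmooth} irrespective of $\D f$. For $k\ge 2$, \thref{prop:shift_scsmooth} identifies $\D(f\restr{U_1}) = (\D f)\restr{\Tg U^1}$, i.e.\ the derivative of $f\restr{U_1}$ is the index shift of $\D f$; since $\D f$ is $\scale^k$, in particular $\scale^{k-1}$, a second application of \thref{prop:shift_scsmooth} --- this time to the $\scale^{k-1}$ map $\D f$ --- shows $\D(f\restr{U_1})$ is $\scale^{k-1}$. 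As $f\restr{U_1}$ is itself an $\scale^1$ map, the inductive hypothesis applied to $f\restr{U_1}$ gives that $\Tg(f\restr{U_1})$ is $\scale^{k-1}$, which is by definition the statement that $f\restr{U_1}$ is $\scale^k$; together with the reduction above this makes $\Tg f$ $\scale^k$ and completes the induction.

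The routine part of the argument is checking that the projections and inclusions used really are $\scale^0$-operators and that the identity $\Tg f = \Phi_1\diagprod\D f$ holds literally (both follow from the definitions of $\Tg$ on scales and open subsystems). The step that needs the most care is the index bookkeeping in the inductive step: one must verify that $\Tg(U^1)$ and $(\Tg U)^1$ are the same open system of subsets of $X^2\times X^1$, so that ``$\D(f\restr{U_1})$ is the shift of $\D f$'' is both meaningful and correct, and one must be careful that \thref{prop:shift_scsmooth} and \thref{prop:chain_scsmooth} are invoked only with exponents in $\setN$ --- which is precisely why the cases $k = 0$ and $k = 1$ must be handled separately before the induction proper.
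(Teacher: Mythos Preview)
Your proposal is correct and follows essentially the same approach as the paper: the paper's proof consists precisely of the two decompositions $\D f = \pr_2^{\Tg Y}\circ\Tg f$ and $\Tg f = ((f\restr{U_1}\circ\pr_1^{\Tg U})\times\D f)\circ\diag_{\Tg U}$, which are your $\D f = \pr_2\circ\Tg f$ and $\Tg f = \Phi_1\diagprod\D f$. The paper stops there and leaves the rest implicit; you have correctly supplied the missing induction showing that $\D f\in\scale^k$ forces $f\restr{U_1}\in\scale^k$, which is indeed needed for the ``if'' direction and which the paper's one-line proof does not spell out.
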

\begin{proof}
	We have $\D f = \pr^{\Tg Y}_2 \circ \Tg f: \Tg U \mapsto Y$ and $\Tg f = ((f\restr{U_1}\circ\pr^{\Tg U}_1) \times \D f) \circ \diag_{\Tg U}: \Tg U \mapsto \Tg Y$, where $\pr^{\Tg Y}_2: Y^1 \times Y \mapsto Y$ and $\pr^{\Tg U}_1: U^1 \times X \mapsto U^1$ are the canonical projections.
\end{proof}

For more easily dealing with scale maps defined on a product scale, and also for expressing flow equations on scales in Section~\ref{sec:hampde}, it is important to have a notion of partial differentiation. The most elegant solution consists of extending scale calculus to finitely\hyp{}indexed Banach scales, subsequently regarding a partial $\scale$\hyp{}derivative as the usual $\scale$\hyp{}derivative with respect to one argument while keeping the other argument fixed at a point of certain regularity. Such an extension of the theory is needed since, keeping the notation of \thref{lem:sc_properties}, for a scale map $f: U \times V \mapsto Z$ and a fixed element $y\in V_m$, $m\in\setN$, the scales involved on the map $f(\cdot, y)$ are indexed on $\{0,1,\ldots,m\}$ only. The extension is presented in \cite{crespo17}, but to avoid the overhead, we opt for an ad hoc definition here which gives the same result when the map in question is jointly $\scale^1$ {\it a priori}.

For an $\scale^1$ map $f: U \times V \mapsto Z$ (notation of \thref{lem:sc_properties}), we define the partial $\scale$\hyp{}derivative with respect to the first argument as the $\scale^0$ map $\frac{\partial f}{\partial x} \eqdef \D f(\cdot, \cdot, \cdot, 0): U^1 \times V^1 \times X \mapsto Z$. Similarly, with respect to the second argument, we have the $\scale^0$ map $\frac{\partial f}{\partial y} \eqdef \D f(\cdot, \cdot, 0, \cdot) : U^1 \times V^1 \times Y \mapsto Z$. Note that according to the coordinate restriction property of \thref{lem:sc_properties}\ref{lem:restr_scsmooth}, for every $y \in V_\infty$, $f(\cdot, y)$ is an $\scale^1$ map and $\D [f(\cdot, y)](x,\xi) = \frac{\partial f}{\partial x}(x,y,\xi)$ for every $x\in U_{m+1}$ and $\xi \in X_m$, $m\in\setNz$ (and similar formulas hold for $f(x, \cdot)$, $x\in U_\infty$).

As a last examination of the properties of $\scale$\hyp{}smoothness, we now turn our attention to curves, where the domain scale $X=\setR$ is the real one\hyp{}dimensional constant scale. This kind of maps will be used when dealing with $\scale$\hyp{}smooth flows in Section~\ref{sec:hampde}. To begin with, note that for a Banach scale $Y$ and $A\subset \setR$, scale ($\scale^0$) maps $A \mapsto Y$ are in 1-1 correspondence to plain one\hyp{}layer (continuous) maps $A \mapsto Y_\infty$, since all elements of $A$ are smooth. The correspondence is obtained by sending a scale ($\scale^0$) map $u: A \mapsto Y$ to its limit map $\lim_{k\in \setNz} u_k : A \mapsto Y_\infty$. Furthermore, for a Banach scale $Z$ and $W\subset Z$ open, scale maps $g: W \times \setR \mapsto Y$ which are linear in the second argument are in 1-1 correspondence with scale maps $\bar g: W \mapsto Y$ by sending $g$ to $\bar g = g(\cdot, 1)$, and $g$ is $\scale^k$ if and only if $\bar g$ is so, $k\geq 0$. This allows us to simplify the treatment of the $\scale$\hyp{}derivative of $\scale^1$ maps $u: U\subset\setR \mapsto Y_\infty$ by analysing $\D u(\cdot, 1): U \mapsto Y_{\infty}$ instead of the complete $\D u$. The following proposition, which ends this subsection, relates classical smoothness\footnote{One can still consider smoothness of curves into Frèchet spaces: a map $u: U\subset\setR \mapsto Y_\infty$ is continuously differentiable if (as usual) a continuous map $\dot u: U \mapsto Y_\infty$ exists with $\frac{u(t+h) - u(t)}{h} \to \dot u(t)$ in $Y_\infty$ as $h\to 0$, for all $t\in U$. $u$ is recursively defined to be $C^{k+1}$ if $\dot u$ is $C^{k}$, $k\geq 1$.} with $\scale$\hyp{}smoothness for these maps.
\begin{proposition} \thlabel{prop:domain_R_scsmoothness}
	Let $Y$ be a Banach scale, $U\subset \setR$ be open, and $k \in \setN$. Then a scale map $u: U\subset\setR \mapsto Y_\infty$ is $\scale^k$ if and only if it is $C^k$. If this holds, the $\scale$\hyp{}derivative $\D u(\cdot, 1): U \mapsto Y_{\infty}$ and the classical derivative $\dot u: U \mapsto Y_{\infty}$ coincide.
\end{proposition}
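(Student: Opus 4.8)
The plan is to induct on $k$. The base case $k=1$ follows by feeding the constant scale $X=\setR$ into Hofer--Wysocki--Zehnder's characterization \thref{prop:sc1_alternative} and then invoking an elementary fact about differentiable curves into a Fr\`echet limit; the inductive step is pure bootstrapping, combining \thref{cor:tf_df_sck} with the one-to-one correspondence between scale maps that are linear in a real variable and their values at $1$, recalled just before \thref{prop:domain_R_scsmoothness}.

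For the base case I would first unwind \thref{prop:sc1_alternative} in our situation. Because the domain scale $X=\setR$ is constant, every $U_{m}$ equals $U$, every bonding map of $X$ is the identity, and every linear map $\setR\to Y_{m}$ is automatically continuous; hence conditions \ref{prop:sc1_alternative_ext} and \ref{prop:sc1_alternative_cont} are vacuous and the proposition collapses to the assertion that $u$ is $\scale^{1}$ exactly when $u\colon U\to Y_{m}$ is $C^{1}$ for every $m\in\setNz$, with $(\D u)_{m}(x,\xi)=\xi\,\dot u_{m}(x)$, where $\dot u_{m}$ is the classical derivative of $u$ viewed into $Y_{m}$. It then remains to see that ``$u\colon U\to Y_{m}$ is $C^{1}$ for all $m$'' is equivalent to ``$u\colon U\to Y_{\infty}$ is $C^{1}$''. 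In the easy direction, composing with the continuous linear projections $p_{\infty m}\colon Y_{\infty}\to Y_{m}$ produces a $C^{1}$ curve into each layer. Conversely, if $u$ is $C^{1}$ into each $Y_{m}$, uniqueness of the Fr\`echet derivative forces $\iota^{Y}_{m,m-1}\circ\dot u_{m}=\dot u_{m-1}$, so the $\dot u_{m}$ form a cone over the projective system $Y$ and assemble into a continuous map $\dot u\colon U\to Y_{\infty}$; since the topology of $Y_{\infty}=\lim_{m}Y_{m}$ is generated by the layer seminorms, the difference quotients of $u$ converge to $\dot u$ in $Y_{\infty}$ as soon as they converge in each $Y_{m}$, so $\dot u$ is the derivative of $u$ into $Y_{\infty}$. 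Passing to limit maps gives $(\D u)_{m}(\cdot,1)=\dot u_{m}=p_{\infty m}\circ\dot u$, so the limit map of $\D u(\cdot,1)$ is precisely $\dot u$; this disposes of the base case, including the claimed identification of derivatives.

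For the inductive step, assume the statement for some $k\in\setN$ and take a scale map $u\colon U\to Y$. If $u$ is $\scale^{k+1}$ or $C^{k+1}$ then, being $\scale^{1}$ respectively $C^{1}$, it is by the base case both, and $\dot u$ is the limit map of $\D u(\cdot,1)$. Now $u$ is $\scale^{k+1}$ iff $\Tg u$ is $\scale^{k}$, which by \thref{cor:tf_df_sck} is equivalent to $\D u\colon \Tg U=U^{1}\times\setR\to Y$ being $\scale^{k}$; since $\D u$ is linear in the $\setR$-variable, the correspondence preceding \thref{prop:domain_R_scsmoothness} makes this equivalent to $\D u(\cdot,1)\colon U^{1}\to Y$ being $\scale^{k}$. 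But $\D u(\cdot,1)$ is again a scale-valued curve on the constant scale $\setR$ whose limit map is $\dot u$, so the induction hypothesis identifies its $\scale^{k}$ property with $\dot u$ being $C^{k}$, i.e.\ with $u$ being $C^{k+1}$. Chaining these equivalences closes the induction, and since an $\scale^{k}$ curve is in particular $\scale^{1}$ for $k\geq 1$, the identity $\D u(\cdot,1)=\dot u$ holds in all cases. The step I expect to be the real work is the base case: one must check carefully that on the constant scale $\setR$ the auxiliary hypotheses of \thref{prop:sc1_alternative} are automatically satisfied, and push through the elementary but slightly delicate passage between $C^{1}$ regularity into every Banach layer $Y_{m}$ and $C^{1}$ regularity into the Fr\`echet limit $Y_{\infty}$ --- in particular assembling the layerwise derivatives into one cone via uniqueness of the Fr\`echet derivative. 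Everything after that is a formal chain of previously established equivalences.
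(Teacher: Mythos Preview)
Your proof is correct and follows essentially the same approach as the paper's: reduce the base case to \thref{prop:sc1_alternative} on the constant scale $\setR$, translate between layerwise $C^1$ and $C^1$ into the Fr\'echet limit, and then induct. Your write-up is simply more explicit about the inductive step (via \thref{cor:tf_df_sck} and the linear-in-$\setR$ correspondence) than the paper's terse ``Induction completes the argument''; one small imprecision is calling condition~\ref{prop:sc1_alternative_cont} ``vacuous'' --- it is not literally vacuous but follows from~\ref{prop:sc1_alternative_c1} since $C^1$ gives a continuous derivative and scalar multiplication $\setR\times Y_m\to Y_m$ is jointly continuous.
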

\begin{proof}
	Beginning with $k=1$, by \thref{prop:sc1_alternative}, $u: U \mapsto Y$ is $\scale^1$ if and only if $u: U \mapsto Y_m$ is $C^1$ for all $m\in\setNz$. If this holds, the $\scale$\hyp{}derivative is simply the classical derivative $\dot u: U \mapsto Y_{\infty}$ upon the above described identifications. In turn, all $u: U \mapsto Y_m$ being $C^1$ is equivalent to $u: U \mapsto Y_{\infty}$ being $C^1$, since the bonding maps $\iota_{\infty m}^Y$ are linear continuous. Note here that the topology of $Y_\infty$ is generated by the norms of $Y_m$, $m\geq 0$. Induction completes the argument for $k > 1$.
\end{proof}

\subsection{Scale Manifolds} \label{sec:sc_manifolds}
In a way similar to standard differential geometry \cite{zeidler97,lee12}, we can generalize $\scale$\hyp{}smoothness to topological spaces which are locally modeled on a Banach scale. We begin by introducing the notion of an $\scale^\infty$\hyp{}manifold \cite{hofer06,hofer10} and show subsequently that every $\scale^\infty$\hyp{}manifold gives rise to a filtration induced by the local scale structure. Next, we define the tangent scale at each point of a filtration subspace: a finitely\hyp{}indexed Banach scale for which each choice of a coordinate chart gives rise to an isomorphism to the local model. Finally, we define tangent bundles, $\scale$\hyp{}smooth maps between $\scale^\infty$\hyp{}manifolds and corresponding tangent maps. Recall that $m = \{0, 1, \ldots, m-1\}$ can be seen as a set for $m\in\setN$, and that for this section, Banach scales are assumed to be over the reals and on $\setNz$. 
	
\begin{definition} \thlabel{def:sc_manifold}
	Let $M$ be a Hausdorff space and $X$ be a Banach scale.
	\begin{enumerate}
		\item A coordinate chart is a pair $(U, \phi)$, where $U \subset M$ is an open subset and $\phi: U \isomapsto \phi(U) \subset X_0$ is a homeomorphism onto an open subset $\phi(U)$ of $X_0$. $U$ is called the coordinate domain of $(U, \phi)$, whereas $\phi$ is called the coordinate map.
		\item Two coordinate charts $(U, \phi)$ and $(U',\psi)$ are called $\scale^\infty$\hyp{}compatible whenever $U\cap U' = \emptyset$ or the transition map $\psi \circ \phi^{-1}: \phi(U\cap U') \mapsto \psi(U\cap U')$ is an $\scale^\infty$\hyp{}diffeomorphism.
		\item An $\scale^\infty$\hyp{}atlas is a collection $\mathcal{A} = \{(U_a, \phi_a)\}_{a\in A}$ of coordinate charts such that $\cup_{a\in A} U_a = M$ and $(U_a, \phi_a)$ is $\scale^\infty$\hyp{}compatible with $(U_b, \phi_b)$ for all $a,b\in A$.
		\item An $\scale^\infty$\hyp{}atlas $\mathcal{A}$ is called maximal whenever for all $\scale^\infty$\hyp{}atlases $\mathcal{B}$ with $\mathcal{A} \subset \mathcal{B}$, we have $\mathcal{B} = \mathcal{A}$.
		\item An $\scale$\hyp{}smooth structure for $M$ is a maximal $\scale^\infty$\hyp{}atlas $\mathcal{A}$ of charts from open subsets of $M$ onto open subsets of $X_0$ as above. The pair $(M, \mathcal{A})$ is called an $\scale^\infty$\hyp{}manifold (locally) modeled on $X$. Usually, $\mathcal{A}$ is suppressed from notation.
	\end{enumerate}
\end{definition}
	
\begin{remark} \thlabel{rmk:sc_mflds}
	\begin{enumerate}
		\item Replacing $\infty$ by $k$, one has the definition of an $\scale^k$\hyp{}manifold, $k\in\setNz$. Note that $\scale^k$\hyp{}manifolds are Banach manifolds as well.
		\item With the same proof as in the finite\hyp{}dimensional case, an $\scale^\infty$\hyp{}atlas $\mathcal{A}$ admits a unique maximal atlas $\overline{\mathcal{A}}$ with $\mathcal{A} \subset \overline{\mathcal{A}}$ \cite[Proposition~1.17(a)]{lee12}. Hence, for defining an $\scale^\infty$\hyp{}manifold, we only need to specify some (eventually non\hyp{}maximal) atlas.
		\item \label{rmk:sc_mflds_basis} It follows from the definition of an $\scale^\infty$\hyp{}manifold that a basis for the topology of $M$ is given by sets of the form $\phi^{-1}(V)$, where $(U,\phi)$ is a coordinate chart and $V\subset \phi(U)$ is open.
		\item \label{rmk:sc_mflds_sets} If $M$ is a set instead of a topological space, one can modify the definition requiring, for each coordinate chart $(U, \phi)$, the coordinate domain $U$ only to be a subset of $M$, $\phi$ to be a bijection onto an open subset $\phi(U)$ of $X_0$, and $\phi(U \cap U') \subset \phi(U)$ to be open for all coordinate domains $U'\subset M$, while maintaining the $\scale^\infty$ compatibility condition. The basis in \ref{rmk:sc_mflds_basis} then generates a unique topology on $M$ such that the coordinate domains are open in $M$ and the coordinate maps are homeomorphisms. Consequently, if $M$ endowed with this topology is Hausdorff, the coordinate charts induce an $\scale$\hyp{}smooth structure on $M$.
		\item It is possible to change the topological requirements for $\scale^\infty$\hyp{}manifolds. We restrain ourselves to the fairly weak Hausdorff condition, since it is easily shown to be inherited by each member of the filtration induced by an $\scale^\infty$\hyp{}manifold (\thref{lem:filtration_sc_mfld}); stronger conditions are more difficult to handle.
	\end{enumerate}
\end{remark}
		
If $M$ is an $\scale^\infty$\hyp{}manifold, $(U, \phi)$ is a coordinate chart and we define $V \eqdef \phi(U) \subset X_0$, the open system of subsets induced by $V$, $V_m = V \cap X_m$, $m\in\setNz$, pulls back to a descending filtration on $U$, $U_m \eqdef \phi^{-1}(V_m)$, $m\in\setNz$. An $\scale^\infty$\hyp{}atlas $\mathcal{A} = \{(U_a, \phi_a)\}_{a\in A}$ for the $\scale$\hyp{}smooth structure of $M$ then induces a global filtration $M_m \eqdef \cup_{a\in A} (U_a)_m$, $m\in\setNz$. Since the transition maps of $\mathcal{A}$ are scale maps, we have $U_m \cap U' = U \cap U_m' = U_m \cap U_m'$ and the filtration is independent of the chosen atlas. For a general $U\subset M$ open, we may then set $U_m \eqdef U \cap M_m$, $m\in\setNz$, and this definition is consistent with the case that $U$ is a coordinate domain.
	
Each coordinate chart $\phi: U \isomapsto V \subset X_0$ of $M$ restricts to bijections $\phi_m \eqdef \phi\restr{U_m}: U_m \isomapsto V_m$. 
We claim that for an $\scale^\infty$\hyp{}atlas $\mathcal{A}$ as above and fixed $m>0$, the bijections $\{(\phi_a)_m\}_{a\in A}$, induce an $\scale$\hyp{}smooth structure on $M_m$ with local model $X^m$. It is then clear that the filtration of $M_m$ is given by $(M_m)_l = M_{m+l}$, $l\in\setNz$. In analogy to the linear case of a Banach scale, we define the shifted filtration $M^m \eqdef \{M_{m+l}\}_{l\in\setNz}$ and the limit $M_\infty \eqdef \cap_{m\in\setN} M_m$ with limit topology.
\begin{lemma} \thlabel{lem:filtration_sc_mfld}
	Let $M$ be an $\scale^\infty$\hyp{}manifold, $\mathcal{A} = \{(U_a, \phi_a)\}_{a\in A}$ be an $\scale^\infty$\hyp{}atlas for the $\scale$\hyp{}smooth structure of $M$ and $M_m = \cup_{a\in A} (U_a)_m$, $m\in\setNz$. Then, for each $m\in\setNz$, $M_m$ is an $\scale^\infty$\hyp{}manifold with local model $X^m$ and coordinate charts $(\phi_a)_m: (U_a)_m \isomapsto (V_a)_m$, $a\in A$. Furthermore, the inclusions $M_m \hookrightarrow M_l$ are continuous for all $m>l\in\setNz$.
\end{lemma}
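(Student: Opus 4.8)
\emph{Proof plan.}
The plan is to reduce everything to \thref{rmk:sc_mflds}\ref{rmk:sc_mflds_sets}: to put an $\scale^\infty$-manifold structure on a \emph{set} it suffices to give a covering family of bijections onto open subsets of the local model which are pairwise $\scale^\infty$-compatible and satisfy the overlap-openness condition, provided the topology they generate is Hausdorff. The candidate charts on $M_m$ are the restricted maps $(\phi_a)_m\colon (U_a)_m \isomapsto (V_a)_m$, $a\in A$, with local model the shifted scale $X^m$ (so $(X^m)_0 = X_m$). I would verify, in order: (i) each $(V_a)_m$ is open in $X_m$; (ii) the overlap-openness condition; (iii) $\scale^\infty$-compatibility of the $(\phi_a)_m$ relative to $X^m$; (iv) Hausdorffness of the generated topology --- and continuity of the inclusions $M_m\hookrightarrow M_l$ will come out as a byproduct of (iv), thereby also settling the last sentence of the lemma.

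Steps (i) and (ii) are immediate from continuity of bonding maps. For (i), $(V_a)_m = V_a\cap X_m$ is the zeroth term of the open system of subsets of $X^m$ induced by the open set $V_a\subset X_0$, and since $\iota^X_{m0}\colon X_m\hookrightarrow X_0$ is continuous this is open in $X_m = (X^m)_0$; the map $(\phi_a)_m$ is a bijection onto it by construction. For (ii), the filtration-consistency $U_m\cap U' = U_m\cap U'_m$ recorded just before the lemma gives $(U_a)_m\cap(U_b)_m = (U_a\cap U_b)_m$, hence $(\phi_a)_m\big((U_a\cap U_b)_m\big) = \phi_a(U_a\cap U_b)\cap X_m$; as $\phi_a(U_a\cap U_b)$ is open in $V_a\subset X_0$, continuity of $\iota^X_{m0}$ makes this open in $X_m$, and it sits inside $(V_a)_m$.

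Step (iii) carries the one non-formal ingredient. The transition map $(\phi_b)_m\circ(\phi_a)_m^{-1}$ is precisely the zeroth component of the $m$-fold index shift of the original transition map $\tau\eqdef\phi_b\circ\phi_a^{-1}$, an $\scale^\infty$-diffeomorphism between open subsets of the scale $X$. I would iterate \thref{prop:shift_scsmooth} $m$ times --- each application replaces an $\scale^\infty$ map $f\colon U\to Y$ by the $\scale^\infty$ map $f\restr{U_1}\colon U^1\to Y^1$, and $(U^1)^1 = U^2$, etc. --- to conclude that $\tau$ restricted to level $m$ is $\scale^\infty$ as a scale map with local model $X^m$ on both sides, and likewise for $\tau^{-1}$. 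Hence $(\phi_b)_m\circ(\phi_a)_m^{-1}$ is an $\scale^\infty$-diffeomorphism with local model $X^m$, which is the required compatibility. The same chart computation incidentally yields $\big((U_a)_m\big)_l = (U_a)_{m+l}$, so the filtration of $M_m$ is $(M_m)_l = M_{m+l}$.

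Finally, \thref{rmk:sc_mflds}\ref{rmk:sc_mflds_sets} then supplies the unique topology on $M_m$ making the $(U_a)_m$ open and the $(\phi_a)_m$ homeomorphisms. Read through the charts $(\phi_a)_m$ and $(\phi_a)_l$, the set inclusion $M_m\hookrightarrow M_l$ is the restriction of the bonding map $\iota^X_{ml}\colon X_m\to X_l$ to the relevant open subset, which is continuous; so $M_m\hookrightarrow M_l$ is continuous for every $m>l\in\setNz$, and in particular $M_m\hookrightarrow M_0 = M$ is a continuous injection. Since $M$ is Hausdorff, so is $M_m$ (pull back disjoint neighbourhoods of two distinct points of $M_m$). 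Then \thref{rmk:sc_mflds}\ref{rmk:sc_mflds_sets} delivers the asserted $\scale$-smooth structure on $M_m$, with local model $X^m$ and charts $(\phi_a)_m$, and the inclusion continuity has already been shown. I do not expect a genuine obstacle here: the proof is essentially bookkeeping, the only substantive input being the shift-compatibility of $\scale$-smoothness (\thref{prop:shift_scsmooth}). The single point demanding care is to keep the topology on $M_m$ --- the one \emph{generated by the charts}, which in general is strictly finer than the subspace topology inherited from $M$ --- cleanly distinguished from that subspace topology throughout; conflating the two would break, e.g., the homeomorphism property of the $(\phi_a)_m$.
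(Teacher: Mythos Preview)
Your proposal is correct and follows essentially the same route as the paper: both invoke \thref{rmk:sc_mflds}\ref{rmk:sc_mflds_sets}, verify overlap-openness via continuity of the bonding maps $X_m\hookrightarrow X_0$, obtain continuity of $M_m\hookrightarrow M_l$ by reading the inclusion through the charts, and deduce Hausdorffness from the continuous injection into $M_0=M$. The only notable difference is that you explicitly invoke \thref{prop:shift_scsmooth} (iterated $m$ times) for the $\scale^\infty$-compatibility of the shifted transition maps, whereas the paper's proof leaves this step implicit; your remark about keeping the chart-generated topology on $M_m$ distinct from the subspace topology also mirrors the paper's footnote on this point.
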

\begin{proof}
	Only the case $m>0$ is new. We aim to prove the statement by using \thref{rmk:sc_mflds}\ref{rmk:sc_mflds_sets} for the set $M_m$, the bijections $(\phi_a)_m$, $a\in A$, and the local model $X^m$. First, let $\mathcal{A} \owns \phi: U \isomapsto V \subset X_0$ be a coordinate chart with induced bijections $\phi_m: U_m \isomapsto V_m$, $m\in\setNz$, and $U' \in \mathcal{A}$ be a further coordinate domain. One has $\phi_m(U_m \cap U') = \phi(U\cap U') \cap V_m$ 
	and since $U_m \cap U' = U_m \cap U_m'$, the continuity of the inclusion map $V_m \hookrightarrow V$ implies that $\phi_m(U_m\cap U_m')$ is open in $V_m$. Hence, the bijections $(\phi_a)_m$ induce a topology on $M_m$ with basis given by sets of the form $(\phi_a)_m^{-1}(W)$ with $W\subset (V_a)_m$ open\footnote{we stress the fact that $(V_a)_m$ inherits the (finer) topology of $X_m$ and not the one of $V_a \subset X_0$.} and $a\in A$.
		
	To see that the inclusions $M_m \hookrightarrow M_l$ are continuous for $m > l \in\setNz$, let $(U,\phi)\in\mathcal{A}$, $W\subset V_l$ be open, and $x \in \phi_l^{-1}(W) \cap M_m$. Then $x\in U_m$, hence $x\in \phi_m^{-1}(W\cap V_m) \subset \phi_l^{-1}(W) \cap M_m$. Again using the continuity of the inclusion $V_m \hookrightarrow V_l$, we conclude that $x$ is an inner point. Now, the fact that $M_m \hookrightarrow M_0$ is continuous precisely means that the topology on $M_m$ is finer than the subspace topology induced by $M_0$. The Hausdorff property of $M_m$ then follows directly from the fact that $M=M_0$ is Hausdorff.
\end{proof}
	
To complement the basic definitions, we give simple constructions of $\scale^\infty$\hyp{}manifolds which work the same way as in the finite\hyp{}dimensional case and, subsequently, we introduce the pivotal example of an $\scale^\infty$\hyp{}manifold which will be used when discussing Hamiltonian flows in Section~\ref{sec:hampde}. The proof of the constructions is simple and left to the reader.
\begin{lemma} \thlabel{lem:sc_mfld_constructions}
	Let $X$ and $Y$ be Banach scales, $M$ and $N$ be $\scale^\infty$\hyp{}manifolds locally modeled on $X$ and $Y$, and $\mathcal{A} = \{(U_a,\phi_a)\}_{a\in A}$ and $\mathcal{B} = \{(\Omega_b,\psi_b)\}_{b\in B}$ be atlases for $M$ and $N$, respectively.
	\begin{enumerate}
		\item $X$ is by itself an $\scale^\infty$\hyp{}manifold: the single chart $\id_{X_0}: X_0 \mapsto X_0$ defines an $\scale$\hyp{}smooth structure on $X_0$ with local (global!) model $X$ and filtration $(X_0)_m = \iota_{m0}(X_m) \iso X_m$, where $\iota_{m0}: X_m \hookrightarrow X_0$ is the bonding map. Clearly, the $\scale$\hyp{}smooth structure of $(X_0)_m$ is the one obtained by applying this construction to $X^m$.
		\item An open subset $U\subset M$ has an $\scale$\hyp{}smooth structure given by the charts $\phi_a\restr{U \cap U_a}: U \cap U_a \isomapsto \phi_a(U \cap U_a)$, $a\in A$, local model $X$ and filtration $U_m = U \cap M_m$, $m\in\setNz$.
		\item The product $M\times N$ has an $\scale$\hyp{}smooth structure given by the charts $\phi_a \times \psi_b: U_a \times \Omega_b \isomapsto \phi_a(U_a) \times \psi_b(\Omega_b)$, $(a,b)\in A\times B$, local model $X\times Y$ and filtration $(M\times N)_m = M_m \times N_m$, $m\in\setNz$.
	\end{enumerate}
\end{lemma}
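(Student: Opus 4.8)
The plan is to check, for each of the three constructions, the three requirements of \thref{def:sc_manifold}: that the proposed charts are homeomorphisms onto open subsets of the zeroth layer of the claimed local model, that their transition maps are $\scale^\infty$-diffeomorphisms, and that the underlying space is Hausdorff. The assertions about the induced filtration and about the local models of the filtration members then follow by unwinding the definitions recorded before \thref{lem:filtration_sc_mfld}. For construction (a), the single chart $\id_{X_0}: X_0 \isomapsto X_0$ is trivially a homeomorphism onto an open subset of $X_0$, there is no nontrivial transition map (formally, the only one is $\id_{X_0}$, which is $\scale^\infty$ by \thref{lem:sc_properties}\ref{lem:incl_scsmooth}), and $X_0$ is Hausdorff as a normed space; unwinding the filtration gives $(X_0)_m = \id_{X_0}^{-1}(X_0 \cap X_m) = \iota_{m0}(X_m) \iso X_m$, and both the structure furnished on $(X_0)_m$ by \thref{lem:filtration_sc_mfld} and the one obtained by applying (a) to $X^m$ consist of the single chart $\id_{X_m}$ with local model $X^m$, so they coincide.

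For construction (b), I would take the given atlas $\mathcal{A} = \{(U_a,\phi_a)\}_{a\in A}$ of $M$ and restrict every chart: since $U$ is open, each $U \cap U_a$ is open, each $\phi_a\restr{U\cap U_a}$ is a homeomorphism onto the open set $\phi_a(U\cap U_a) \subset X_0$, and these charts cover $U$. A transition map between two such restricted charts is the restriction of a transition map of $\mathcal{A}$ to an open subset of its domain; since the $\scale^k$ conditions are local (as noted after \thref{def:sc1}), the restriction of an $\scale^\infty$ map to an open subsystem is again $\scale^\infty$, and a restriction of a homeomorphism to an open set is a homeomorphism onto an open set, so the restriction of an $\scale^\infty$-diffeomorphism is again an $\scale^\infty$-diffeomorphism. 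Hausdorffness of $U$ is inherited from $M$, and the resulting filtration $U_m = U\cap M_m$ is exactly the one fixed by convention for open subsets before \thref{lem:filtration_sc_mfld}.

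For construction (c), the charts $\phi_a\times\psi_b: U_a\times\Omega_b \isomapsto \phi_a(U_a)\times\psi_b(\Omega_b)$ are homeomorphisms onto open subsets of $X_0\times Y_0 = (X\times Y)_0$ and cover $M\times N$. A transition map between two such charts equals $(\phi_{a'}\circ\phi_a^{-1})\times(\psi_{b'}\circ\psi_b^{-1})$ on the product $\phi_a(U_a\cap U_{a'})\times\psi_b(\Omega_b\cap\Omega_{b'})$ of the domains of the individual transition maps; it and its inverse are products of $\scale^\infty$ maps, hence $\scale^\infty$ by \thref{lem:prod_scsmooth} (applied for every $k$), so it is an $\scale^\infty$-diffeomorphism. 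The product $M\times N$ is Hausdorff, and since $(\phi_a\times\psi_b)^{-1}\bigl((\phi_a(U_a))_m\times(\psi_b(\Omega_b))_m\bigr) = (U_a)_m\times(\Omega_b)_m$ while $(X\times Y)^m = X^m\times Y^m$, the filtration is $(M\times N)_m = M_m\times N_m$ with local model $X\times Y$.

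Every step above reduces to a definition-chase or a direct appeal to \thref{lem:sc_properties}, \thref{lem:prod_scsmooth}, or the locality of the $\scale^k$ conditions, so I do not expect a genuine obstacle. The one point deserving a line of justification is that the restriction of an $\scale^\infty$-diffeomorphism to an open subset of its domain is again one, which holds because the Fréchet condition \eqref{eq:frechet_condition} is tested pointwise and the $\scale^0$-regularity of $\D f$ survives restriction to an open subsystem, after which bootstrapping propagates this to all higher-order conditions.
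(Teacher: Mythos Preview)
Your proposal is correct and is precisely the kind of direct verification the paper has in mind: the paper itself states that ``the proof of the constructions is simple and left to the reader'' and gives no argument at all. Your references to \thref{lem:sc_properties}\ref{lem:incl_scsmooth}, \thref{lem:prod_scsmooth}, and the locality of the $\scale^k$ conditions are exactly the ingredients needed, and the filtration computations are correct.
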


\begin{example} \thlabel{ex:projective_hilbert}
	If $X$ is a separable Hilbert space over $\setF = \setR~\txt{or}~\setC$ and $\scf{X}$ is its induced scale (restricted to indices in $\setNz$ and scalars in $\setR$), then the projectivization of $X=X_0$,
	\begin{equation}
		M \eqdef \projvect(X) = \faktor{X \setminus \{0\}}{\setF^*} \,,
	\end{equation}
	is an $\scale^\infty$\hyp{}manifold modeled on $\scf{X}$. To prove this, first note that the isometric isomorphism $X \isomapsto l_\setF^2$ descends to a homeomorphism $\projvect(X) \isomapsto \projvect(l_\setF^2)$, hence $X=l_\setF^2$ without loss of generality. Similarly to the finite\hyp{}dimensional case of $\projvect(\setF^{d}) = \setF \projvect^{d-1}$, we can define $U_a \eqdef \{[x] \in \projvect(X): x_a \neq 0\} \subset \projvect(X)$ open, $a\in\setZ$, and coordinate charts $\phi_a: U_a \isomapsto X_0$ given by
	\begin{equation} \label{eq:charts_projective_hilbert}
		\phi_a([x])_n = \frac{1}{x_a} \cdot
		\begin{cases}
			x_n		&\txt{if}~n<a \\
			x_{n+1}	&\txt{if}~n\geq a \,.
		\end{cases}
	\end{equation}
	These charts are easily seen to be homeomorphisms and the corresponding transition maps are $\scale$\hyp{}smooth.
	
	To obtain a natural filtration on $\projvect(X)$, we need to impose a slight condition on the defining sequences. Specifically, if both sequences $\{{\nu_n}/{\nu_{n+1}}\}_{n\in\setZ}$ and $\{{\nu_{n+1}}/{\nu_{n}}\}_{n\in\setZ}$ are bounded, we can define a compatible atlas for $\projvect(X_m)$ given by \eqref{eq:charts_projective_hilbert}, but as maps $\{[x] \in \projvect(X_m): x_a \neq 0\} \isomapsto X_m$. In that case, we have a homeomorphism
	\begin{equation}
		M_m \isomapsto \projvect(X_m), \, [x]_{\projvect(X_0)} \mapsto [x]_{\projvect(X_m)}
	\end{equation}
	which is actually an $\scale^\infty$\hyp{}diffeomorphism (see \thref{def:sck_maps_mfld}). The topology on $\projvect(X_m)$ can be seen to be separable metrizable (in particular Hausdorff) for all $m\in\setNz$.
\end{example}
	
As \thref{lem:filtration_sc_mfld} and the preceding discussion show, an $\scale^\infty$\hyp{}manifold can be re\hyp{}interpreted as a filtration of topological spaces which locally is levelwise homeomorphic to an open subset of the local model, and where the transition maps are $\scale$\hyp{}smooth. It is then natural to extend the definitions of Section~\ref{sec:banach_scales} involving scale maps to this context. A function $f: M \mapsto N$ between two $\scale^\infty$\hyp{}manifolds $M$ and $N$ modeled on $X$ and $Y$, respectively, is said to be a scale map if $f(M_m) \subset N_m$ for each $m\in\setNz$. In other words, we require $f$ to restrict to maps $f_m \eqdef f\restr{M_m}: M_m \mapsto N_m$. A scale map $f: M \mapsto N$ is called $\scale^0$ if all $f_m$ are continuous. It is an $\scale^0$\hyp{}homeomorphism if, additionally, it is bijective and $f^{-1}: N \mapsto M$ is $\scale^0$.

Following our developments of Section~\ref{sec:sc_calculus}, one also expects to define $\scale^k$ maps ($k>0$) in the manifold context. For this, we need the concept of tangent scales and bundles on $\scale^\infty$\hyp{}manifolds.
\begin{definition} \thlabel{def:tangent_scales}
	Let $M$ be an $\scale^\infty$\hyp{}manifold modeled on a Banach scale $X$ and let $p\in M_{m+1}$, $m\in\setNz$.
	\begin{enumerate}
		\item \label{def:sc_tg_spaces_set} For pairs $((U, \phi), v)$, where $(p\in U, \phi)$ is a coordinate chart of $M$ around $p$ and $v\in X_m$, define $((U, \phi), v)$ and $((U', \psi), w)$ to be equivalent if $\D(\psi\, \phi^{-1})(\phi(p), v) = w$. Define the $m\th$\hyp{}layer tangent space $(\Tg_p M)_m$ of $M$ at $p$ to be the corresponding quotient space.
		\item \label{def:sc_tg_spaces_banach} Endow $(\Tg_p M)_m$ with the structure of a Banachable (completely normable) space by pulling back the vector space operations and topology of $X_m$ via the well\hyp{}defined bijection $(\D_p\phi)_m: (\Tg_p M)_m \isomapsto X_m,\, [(U',\psi),w] \mapsto \D(\phi\, \psi^{-1})(\psi(p), w)$ for a given coordinate chart $(U(p),\phi)$.
		\item \label{def:sc_tg_scales} Since $p\in M_{m+1} \subset M_m \subset \ldots \subset M_1$, we have well\hyp{}defined $k\th$\hyp{}layer tangent spaces $(\Tg_p M)_k$, $0\leq k\leq m$. The maps $(\D_p\phi)_k$, $k\in m+1 = \{0, 1, \ldots, m\}$, induce the structure of a Banachable scale on $\Tg_p M \eqdef \{(\Tg_p M)_k\}_{k\in m+1}$ with bonding maps $(\Tg_p M)_k \hookrightarrow (\Tg_p M)_l, \, [(U, \phi), v] \mapsto [(U, \phi), \iota_{kl}(v)]$, $k>l \in m+1$, and with this structure, $\D_p\phi \eqdef \{(\D_p\phi)_k\}_{k \in m+1}: \Tg_p M \isomapsto X\restr{m+1}$ is an isomorphism of scales. The scale $\Tg_p M$ is called the tangent scale of $M$ at $p$.
	\end{enumerate}
\end{definition}
	
One can easily verify that the structures introduced in \thref{def:tangent_scales} \ref{def:sc_tg_spaces_banach},\ref{def:sc_tg_scales} are independent of the chosen coordinate chart $(U,\phi)$. We emphasize the fact that, as in the finite\hyp{}dimensional case, there is no preferred norm on $(\Tg_p M)_m$, hence the denomination of Banach\emph{able} spaces $(\Tg_p M)_m$ and scale $\Tg_p M$, where we only refer to their vector topology. Norms on $\Tg_p M$ only become defined when choosing coordinates and two coordinate charts induce equivalent norms. Note also that when $p\in M_\infty$, we obtain a tangent scale $\Tg_p M$ on $\setNz$ since $M_\infty \subset M_{m+1}$ for all $m\in\setNz$.

The constructions of \thref{lem:sc_mfld_constructions} have the tangent scales expected from finite dimensions, as the following lemma shows.
\begin{lemma} \thlabel{lem:tg_scales_constructions}
	Let $X$ and $Y$ be Banach scales, $M$ and $N$ be $\scale^\infty$\hyp{}manifolds locally modeled on $X$ and $Y$, respectively, and let also $m\in\setNz$. We have canonical isomorphisms of scales as follows.
	\begin{enumerate}
		\item \label{lem:tg_scale_flat} The tangent scale of the $\scale^\infty$\hyp{}manifold $X$ at $x\in X_{m+1}$ is $\Tg_x X \iso X\restr{m+1}$.
		\item \label{lem:tg_scale_open} The tangent scale of an open subset $U\subset M$ at $p\in U_{m+1}$ is $\Tg_p U \iso \Tg_p M$.
		\item \label{lem:tg_scale_product} The tangent scale of $M\times N$ at $(p,q)\in (M\times N)_{m+1}$ is $\Tg_{(p,q)} (M \times N) \iso \Tg_p M \times \Tg_q N$.
	\end{enumerate}
\end{lemma}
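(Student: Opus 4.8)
The plan is to exhibit, in each of the three cases, an explicit map between the tangent scales in question and to check that it is a well\hyp{}defined linear $\scale$\hyp{}isomorphism, by unwinding \thref{def:tangent_scales}, using the chart descriptions of $X$, $U$ and $M\times N$ supplied by \thref{lem:sc_mfld_constructions}, and invoking the chain rule (\thref{prop:chain_scsmooth}) together with the identity $\D(f\times g)=\D f\times\D g$ of \thref{lem:prod_scsmooth}. I will use throughout that, for a fixed chart around a point, every tangent vector at that point has exactly one representative using that chart (this is precisely the content of the bijection $(\D_p\phi)_k$ of \thref{def:tangent_scales}\ref{def:sc_tg_scales}), so I am free to represent tangent vectors by whichever chart of the maximal atlas is convenient. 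Part~\ref{lem:tg_scale_flat} is then immediate: the manifold $X$ carries the single global chart $\id_{X_0}$, and since $\D_x(\id_{X_0})=\id_{X_0}$ by \thref{lem:sc_properties}\ref{lem:lin_scsmooth}, the chart isomorphism $\D_x(\id_{X_0}): \Tg_x X\isomapsto X\restr{m+1}$ of \thref{def:tangent_scales}\ref{def:sc_tg_scales} is exactly the asserted canonical isomorphism.

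For part~\ref{lem:tg_scale_open} I would use that a chart of the open submanifold $U$ around $p$ is (trivially) a chart of $M$ around $p$ with domain inside $U$, and conversely that every chart $(W,\phi)$ of $M$ around $p$ restricts to the chart $(W\cap U,\phi\restr{W\cap U})$ of $U$ around $p$; moreover these restrictions do not change the transition maps near $\phi(p)$, hence not their $\scale$\hyp{}derivatives there, because the $\scale$\hyp{}derivative is local. I would therefore define $\Tg_p U\mapsto\Tg_p M$ by $[(W\cap U,\phi\restr{W\cap U}),v]\mapsto[(W,\phi),v]$. Well\hyp{}definedness and injectivity both reduce to this locality (two charts of $U$ yield equivalent data precisely when the corresponding charts of $M$ do), surjectivity is immediate by restricting any representative to $U$, and linearity together with compatibility with the bonding maps is built into the construction of the tangent scale; thus the map is an $\scale$\hyp{}isomorphism, and it is canonical since it does not depend on the representing chart.

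For part~\ref{lem:tg_scale_product} recall from \thref{lem:sc_mfld_constructions} that $M\times N$ is equipped with the atlas of product charts $\phi_a\times\psi_b$ and that $(M\times N)_{m+1}=M_{m+1}\times N_{m+1}$, so that $(p,q)$ is an admissible base point. On product\hyp{}chart representatives I would define $\Tg_{(p,q)}(M\times N)\mapsto\Tg_p M\times\Tg_q N$ by $[(U_a\times\Omega_b,\phi_a\times\psi_b),(v,w)]\mapsto\big([(U_a,\phi_a),v],[(\Omega_b,\psi_b),w]\big)$. The decisive computation is that a transition map between two product charts factors as $(\phi_{a'}\phi_a^{-1})\times(\psi_{b'}\psi_b^{-1})$, whose $\scale$\hyp{}derivative at $(\phi_a\times\psi_b)(p,q)=(\phi_a(p),\psi_b(q))$ equals $\D(\phi_{a'}\phi_a^{-1})\times\D(\psi_{b'}\psi_b^{-1})$ by \thref{lem:prod_scsmooth}; consequently the equivalence relation defining $\Tg_{(p,q)}(M\times N)$, restricted to product\hyp{}chart representatives, decouples precisely into the equivalence relations defining $\Tg_p M$ and $\Tg_q N$. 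This decoupling simultaneously yields that the map is well defined, bijective, linear and a morphism of scales, hence an $\scale$\hyp{}isomorphism, and by chart\hyp{}independence it does not depend on the chosen product chart, so it is canonical.

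All three statements are bookkeeping layered on top of the definitions, and no single step poses a serious obstacle. The point that genuinely needs attention is the chart\hyp{}independence in part~\ref{lem:tg_scale_product}: one must apply \thref{lem:prod_scsmooth} at the correct base point $(\phi_a\times\psi_b)(p,q)$ and confirm that the two decoupled relations are exactly those of \thref{def:tangent_scales} for $M$ at $p$ and for $N$ at $q$. A more pedantic caveat in part~\ref{lem:tg_scale_open} is to observe that a chart of $M$ restricted to $U$ really does lie in the maximal atlas of $U$, so that the equivalence relations being compared are literally the same one.
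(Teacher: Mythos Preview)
Your proposal is correct and follows essentially the same approach as the paper: both construct the isomorphisms by working with chart representatives via the bijections $\D_p\phi$ of \thref{def:tangent_scales}\ref{def:sc_tg_scales}, and both verify chart\hyp{}independence for part~\ref{lem:tg_scale_product} using the product formula $\D(f\times g)=\D f\times\D g$. The paper's proof is terser (it writes the map as the composite $(\D_p\phi\times\D_q\psi)^{-1}\circ\D_{(p,q)}(\phi\times\psi)$ and declares chart\hyp{}independence ``easily seen''), but your explicit decoupling argument is exactly what underlies that step.
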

\begin{proof}
	The construction of the isomorphisms is straightforward. For example, for $p\in M_{m+1}$ and $q\in N_{m+1}$, coordinate charts $(U(p), \phi)$ and $(\Omega(q), \psi)$ of $M$ and $N$, respectively, give rise to isomorphisms of scales $\D_p \phi: \Tg_p M \isomapsto X\restr{m+1}$, $\D_q \psi: \Tg_q N \isomapsto Y\restr{m+1}$ and $\D_{(p,q)} (\phi\times\psi): \Tg_{(p,q)} (M\times N) \isomapsto (X \times Y)\restr{m+1}$, which combine to the isomorphism $(\D_p \phi \times \D_q \psi)^{-1}\circ \D_{(p,q)} (\phi\times\psi): \Tg_{(p,q)} (M \times N) \isomapsto \Tg_p M \times \Tg_q N, \, [(U\times\Omega, \phi\times\psi), (v,w)] \mapsto ([(U,\phi), v], [(\Omega,\psi), w])$. This isomorphism is easily seen to be independent of the chosen coordinate charts.
\end{proof}
		
By varying $p$, we construct the tangent bundle of $M$ as $\Tg M \eqdef \cup_{p\in M_1} \{p\} \times(\Tg_p M)_0$ with the canonical projection $\pi_{\Tg M}: \Tg M \mapsto M_1$, $(p, v) \mapsto p$. If for $U \subset M$ open we set $\Tg U \eqdef \pi_{\Tg M}^{-1}(U_1)$, each coordinate chart $\phi: U \isomapsto V \subset X_0$ of $M$ induces a bijection $\Tg \phi: \Tg U \isomapsto V_1 \times X_0, \, (p, v) \mapsto (\phi(p), \D_p \phi(v))$. Moreover, if $(U', \psi)$ is an additional coordinate chart, then $\Tg \phi(\Tg U \cap \Tg U') = \phi(U\cap U')_1 \times X_0$ is open in $V_1 \times X_0$, and since $\psi\circ\phi^{-1}: \phi(U\cap U') \isomapsto \psi(U\cap U')$ is $\scale$\hyp{}smooth, the transition map $\Tg \psi \circ (\Tg \phi)^{-1}: \phi(U\cap U')^1 \times X \isomapsto \psi(U\cap U')^1 \times X$ 
is $\scale$\hyp{}smooth as well. Consequently, an $\scale^\infty$\hyp{}atlas $\{(U_a, \phi_a)\}_{a\in A}$ for $M$ gives rise to an $\scale^\infty$\hyp{}atlas $\{(\Tg U_a, \Tg \phi_a)\}_{a\in A}$ for $\Tg M$, whence an $\scale$\hyp{}smooth structure with local model $X^1 \times X$. It is easy to see that the induced filtration is simply $(\Tg M)_m = \cup_{p\in M_{m+1}} \{p\} \times (\Tg_p M)_m$, where $(\Tg_p M)_m$ is considered as a subspace of $(\Tg_p M)_0$ via the corresponding bonding map. Moreover, the fiber $\pi_{\Tg M}^{-1}(p)$ over $p\in M_{m+1}$ is $\{p\} \times (\Tg_p M)_0 \iso (\Tg_p M)_0$, where we recover the scale structure of the tangent scale as $(\Tg_p M)_m \subset (\Tg_p M)_{m-1} \subset \ldots \subset (\Tg_p M)_0$.
	
Tangent scales and bundles allow us to formalize $\scale$\hyp{}smoothness on $\scale^\infty$\hyp{}manifolds by working locally, as the following definition shows.
\begin{definition} \thlabel{def:sck_maps_mfld}
	Let $M$ and $N$ be $\scale^\infty$\hyp{}manifolds modeled on $X$ and $Y$, respectively, $f: M \mapsto N$ be an $\scale^0$ map, and let $k\in\setN\cup\{\infty\}$.
	\begin{enumerate}
		\item \label{def:sc_k_map_mfld} The map $f$ is said to be $\scale^k$ if for each $p\in M$ there are charts $(U(p), \phi)$ of $M$ and $(\Omega(f(p)), \psi)$ of $N$ such that $f(U) \subset \Omega$ and $\psi\,f\restr{U}\,\phi^{-1}: \phi(U) \mapsto \psi(\Omega) \subset Y$ is $\scale^k$.
		\item \label{def:sc_derivative_mfld} If $f$ is $\scale^1$ we define, for each $p\in M_{m+1}$, $m\in\setNz$, the derivative of $f$ at $p$ to be the scale morphism $\D_p f: \Tg_p M \mapsto \Tg_{f(p)} N$ given by the diagram
			\[ \begin{tikzcd}[column sep=huge]
				\Tg_p M \ar[dashed]{r}{\D_p f} \ar{d}{\sim}[swap]{\D_p\phi}	& \Tg_{f(p)} N \ar{d}{\D_{f(p)}\psi}[swap]{\sim} \\
				X\restr{m+1} \ar{r}{\D_{\phi(p)}(\psi\,f\restr{U}\,\phi^{-1})}								& Y\restr{m+1}
			\end{tikzcd}\,,\]
		where the lower row is the $\scale$\hyp{}derivative of Section~\ref{sec:sc_calculus}. We define the tangent map $\Tg f: \Tg M \mapsto \Tg N, \, (p,v) \mapsto (f(p), \D_p f(v))$.
		\item The map $f$ is said to be an $\scale^k$ immersion (submersion) if it is $\scale^k$ and $\D_p f$ is injective (surjective) for all $p \in M_{m+1}$ and $m\in\setNz$. 
		\item The map $f$ is said to be an $\scale^k$\hyp{}diffeomorphism if it is $\scale^k$, bijective, and $f^{-1}: N \mapsto M$ is $\scale^k$.
	\end{enumerate}
\end{definition}
	
As expected, since the transition maps of $M$ and $N$ are $\scale^\infty$\hyp{}diffeomorphisms and since the chain rule of \thref{prop:chain_scsmooth} holds, \thref{def:sck_maps_mfld}\ref{def:sc_k_map_mfld},\ref{def:sc_derivative_mfld} is independent of the choice of charts $(U,\phi)$ and $(\Omega,\psi)$ satisfying $f(U) \subset \Omega$. Also, one could have dropped the umbrella assumption that $f$ is $\scale^0$, since this follows directly from part \ref{def:sc_k_map_mfld}. Some easy consequences of this definition are summarized in the following lemmas.
\begin{lemma}[Chain Rule]
	If $M$, $N$ and $P$ are $\scale^\infty$\hyp{}manifolds and $f: M \mapsto N$ and $g: N \mapsto P$ are $\scale^k$, $k\in\setN\cup\{\infty\}$, then $g\circ f: M \mapsto P$ is $\scale^k$ with $\D_p (g\circ f) = \D_{f(p)} g \circ \D_{p} f: \Tg_p M \mapsto \Tg_{g\circ f(p)} P$ for every $p\in M_{m+1}$ and $m\in\setNz$, hence $\Tg(g\circ f) = \Tg g \circ \Tg f: \Tg M \mapsto \Tg P$.
\end{lemma}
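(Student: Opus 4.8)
The plan is to reduce the assertion to its linear model, the Chain Rule \thref{prop:chain_scsmooth}, by passing to coordinates around a fixed point. First I would fix $p\in M$ and set $q\eqdef f(p)$, $r\eqdef g(q)$, and build a compatible triple of charts as follows. Pick any chart $(W,\chi)$ of $P$ around $r$. Since $g$ is $\scale^k$, hence in particular $\scale^0$ and so continuous on $N=N_0$, the preimage $g^{-1}(W)$ is open and contains $q$; shrinking a chart of $N$ around $q$ so as to lie inside it, I obtain $(\Omega,\psi)$ with $g(\Omega)\subset W$. Likewise, since $f$ is continuous on $M=M_0$, I obtain a chart $(U,\phi)$ of $M$ around $p$ with $f(U)\subset\Omega$. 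By shrinking $U$ and $\Omega$ once more if needed, the coordinate representations $\psi\,f\restr{U}\,\phi^{-1}$ and $\chi\,g\restr{\Omega}\,\psi^{-1}$ can be taken $\scale^k$: the $\scale^k$ condition is local, and, by \thref{prop:chain_scsmooth} together with the fact that all transition maps of $M$, $N$, $P$ are $\scale^\infty$\hyp{}diffeomorphisms, the representation with respect to \emph{any} overlapping pair of charts is $\scale^k$ as soon as it is for one --- this is the very argument that renders \thref{def:sck_maps_mfld} chart\hyp{}independent.

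With such a triple in hand, the crucial observation is the identity
\begin{equation*}
	\chi\,(g\circ f)\restr{U}\,\phi^{-1} = \big(\chi\,g\restr{\Omega}\,\psi^{-1}\big) \circ \big(\psi\,f\restr{U}\,\phi^{-1}\big) \,,
\end{equation*}
together with the inclusion $\psi\,f\restr{U}\,\phi^{-1}\big(\phi(U)\big) = \psi(f(U)) \subset \psi(\Omega)$, which holds by construction. The right\hyp{}hand side is thus a composition of $\scale^k$ maps between open subsets of Banach scales satisfying the hypothesis of \thref{prop:chain_scsmooth}, so it is $\scale^k$; since $p\in M$ was arbitrary and $g\circ f$ is a scale map (as $f(M_m)\subset N_m$ and $g(N_m)\subset P_m$), \thref{def:sck_maps_mfld}\ref{def:sc_k_map_mfld} gives that $g\circ f$ is $\scale^k$.

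For the derivative identity at $p\in M_{m+1}$, $m\in\setNz$, I would stack the two commuting squares that define $\D_p f$ and $\D_q g$ (\thref{def:sck_maps_mfld}\ref{def:sc_derivative_mfld}) in the chosen charts into a single commuting rectangle: its outer vertical maps are the scale isomorphisms $\D_p\phi$ and $\D_r\chi$, its top row is $\D_p(g\circ f)$ --- this being precisely the definition of the derivative of the $\scale^k$ map $g\circ f$ in these charts --- and its bottom row is $\D_{\phi(p)}\big(\chi\,(g\circ f)\restr{U}\,\phi^{-1}\big)$. Applying \thref{prop:chain_scsmooth} to the linear model (extracting the derivative from $\Tg(g\circ f)=\Tg g\circ\Tg f$) factors the bottom row as $\D_{\psi(q)}\big(\chi\,g\restr{\Omega}\,\psi^{-1}\big)\circ\D_{\phi(p)}\big(\psi\,f\restr{U}\,\phi^{-1}\big)$, and transporting back through the vertical isomorphisms yields $\D_p(g\circ f)=\D_q g\circ\D_p f$ as morphisms $\Tg_p M \mapsto \Tg_r P$. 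Finally $\Tg(g\circ f)=\Tg g\circ\Tg f$ is then immediate, since the tangent map of \thref{def:sck_maps_mfld}\ref{def:sc_derivative_mfld} is defined fibrewise by $(p,v)\mapsto\big(g\circ f(p),\D_p(g\circ f)\,v\big)$.

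The main obstacle I anticipate is purely the chart bookkeeping of the first step: arranging $f(U)\subset\Omega$ and $g(\Omega)\subset W$ simultaneously and checking that the necessary shrinkings do not destroy the $\scale^k$ property of the coordinate representations. Both are settled by the continuity of $f$ and $g$, the locality of the $\scale^k$ condition, and $\scale^\infty$\hyp{}smoothness of the transition maps --- the same package already invoked to make \thref{def:sck_maps_mfld} well defined. Beyond that, all analytic content sits in \thref{prop:chain_scsmooth}; the rest is diagram chasing, and the case $k=\infty$ is simply the conjunction of the cases $k\in\setN$.
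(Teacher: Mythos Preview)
Your proposal is correct and follows essentially the same approach as the paper: both reduce to the Banach scale chain rule (\thref{prop:chain_scsmooth}) via coordinate charts, with the paper simply stating ``Apply the chain rule for Banach scales'' and leaving all of your careful chart bookkeeping implicit. Your detailed treatment of the chart selection and the diagram chase is exactly what one has to do to unpack that one-line proof.
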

\begin{proof}
	Apply the chain rule for Banach scales in \thref{prop:chain_scsmooth}.
\end{proof}
\begin{lemma}
	Let $M$ and $N$ be $\scale^\infty$\hyp{}manifolds modeled on $X$ and $Y$, respectively.
	\begin{enumerate}
		\item \label{lem:coordinate_chart_scsmooth} A coordinate chart $\varphi: U \isomapsto V \subset X_0$ of $M$ is an $\scale^\infty$\hyp{}diffeomorphism.
		\item \label{lem:incl_scmfld_scsmooth} The inclusion maps $M^k \hookrightarrow M^l$ are injective $\scale$\hyp{}smooth immersions, $k>l\in\setNz$.
		\item \label{lem:tgbundle_proj_scsmooth} The projection $\pi_{\Tg M}: \Tg M \mapsto M^1$ is a surjective $\scale$\hyp{}smooth submersion.
		\item \label{lem:tangent_map_scsmooth} For an $\scale^1$ map $f: M \mapsto N$, $f$ is $\scale^k$ if and only if $\Tg f: \Tg M \mapsto \Tg N$ is $\scale^{k-1}$, $k\in\setN\cup\{\infty\}$.
	\end{enumerate}
\end{lemma}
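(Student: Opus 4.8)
The plan is to reduce each of the four claims to a computation of the map's local representation in suitable charts, after which everything follows from the template results of \thref{lem:sc_properties}, the chain rule \thref{prop:chain_scsmooth}, and the already-established fact that the notions of \thref{def:sck_maps_mfld} are chart-independent. For parts (a)--(c) the local representation turns out to be, respectively, the identity, a bonding inclusion, and a coordinate projection, so $\scale$\hyp{}smoothness is immediate and the derivative is computed on the nose.

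Concretely, for (a) I would regard $V\subset X_0$ as an $\scale^\infty$\hyp{}manifold via \thref{lem:sc_mfld_constructions}, so that $(V,\id_V)$ is a chart; since the transition maps of $M$ are scale maps, the filtration of $U$ is $U_m=\varphi^{-1}(V_m)$, and \thref{lem:filtration_sc_mfld} shows $\varphi$ restricts to homeomorphisms $U_m\isomapsto V_m$, so $\varphi$ is an $\scale^0$\hyp{}homeomorphism; in the charts $(U,\varphi)$ and $(V,\id_V)$ both $\varphi$ and $\varphi^{-1}$ are represented by $\id_V$, which is $\scale$\hyp{}smooth by \thref{lem:sc_properties}\ref{lem:incl_scsmooth}, so $\varphi$ is an $\scale^\infty$\hyp{}diffeomorphism. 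For (b), using the charts $(\phi_a)_k,(\phi_a)_l$ of \thref{lem:filtration_sc_mfld}, the inclusion $M^k\hookrightarrow M^l$ is locally the inclusion $(V_a)_k\hookrightarrow(V_a)_l$ --- the restriction of the $\scale^0$\hyp{}operator $X^k\mapsto X^l$ whose $j\th$ layer is the compact (hence continuous and injective) bonding map $\iota^X_{k+j,l+j}$ --- which is $\scale$\hyp{}smooth by \thref{lem:sc_properties}\ref{lem:incl_scsmooth},\ref{lem:lin_scsmooth} and \thref{prop:chain_scsmooth}, with $\scale$\hyp{}derivative that same inclusion, injective on every layer; by \thref{def:sck_maps_mfld}\ref{def:sc_derivative_mfld} this makes the map an injective $\scale$\hyp{}smooth immersion. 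Part (c) is identical in spirit: in the bundle charts $\Tg\phi_a$ and the charts $(\phi_a)_1$, $\pi_{\Tg M}$ becomes the projection $\pr_1\colon\phi_a(U_a)^1\times X\mapsto\phi_a(U_a)^1$, the restriction of the $\scale^0$\hyp{}operator $X^1\times X\mapsto X^1$, whose $\scale$\hyp{}derivative is $\pr_1$ on every layer and therefore surjective; together with surjectivity of $\pi_{\Tg M}$ itself (witnessed, for $p\in M_1$, by $(p,0)$) this gives a surjective $\scale$\hyp{}smooth submersion.

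The substantive part is (d). Fixing charts $(U,\phi)$ around $p$ and $(\Omega,\psi)$ around $f(p)$ with $f(U)\subset\Omega$, and writing $\tilde f\eqdef\psi\circ f\restr{U}\circ\phi^{-1}$ (an $\scale^1$ map, since $f$ is, by \thref{def:sck_maps_mfld}\ref{def:sc_k_map_mfld} and chart-independence), the key step is to chase the defining diagram of $\D_p f$ in \thref{def:sck_maps_mfld}\ref{def:sc_derivative_mfld} and verify that, in the bundle charts $\Tg\phi$ and $\Tg\psi$, the map $\Tg f$ is represented by the flat tangent map $\Tg\tilde f$ of \thref{def:sc1}\ref{def:sc1_tgmap}; this is routine once the definitions are unwound, the relevant cancellations being $\D_{f(q)}\psi$ against $(\D_{f(q)}\psi)^{-1}$ and $\D_q\phi$ against $(\D_q\phi)^{-1}$.

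Granting this identification, chart-independence together with the fact that every point of $\Tg M$ lies in some bundle chart $\Tg U_a$ reduces the claim to the Banach-scale statement that an $\scale^1$ map $\tilde f$ is $\scale^k$ if and only if $\Tg\tilde f$ is $\scale^{k-1}$: for $k\geq 2$ this is exactly the recursion defining $\scale^k$ in \thref{def:sc1} (with \thref{lem:sc_properties}\ref{lem:sck_linear} supplying the forward direction), for $k=1$ it is \thref{def:sc1}\ref{def:sc1_tgmap} (the converse being trivial under the standing hypothesis that $f$ is $\scale^1$), and for $k=\infty$ it follows by intersecting over all finite $k$ (reading $\scale^{\infty-1}$ as $\scale^\infty$). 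I expect the main obstacle to be purely the bookkeeping in this last step --- tracking chart-independence, the edge cases $k=1$ and $k=\infty$, and the ``for every local representative'' quantifiers --- rather than any genuine analytic difficulty.
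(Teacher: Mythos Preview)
Your proposal is correct and follows essentially the same approach as the paper: for each part you reduce to the local representation in suitable charts --- the identity for (a), the bonding inclusions $V^k\hookrightarrow V^l$ for (b), the projection $V^1\times X\mapsto V^1$ for (c), and the flat tangent map $\Tg\tilde f$ for (d) --- which is exactly what the paper does, only more tersely. Your treatment is considerably more explicit about the supporting lemmas and the edge cases in (d); the only quibble is that the citation of \thref{lem:sc_properties}\ref{lem:sck_linear} in the $k\geq 2$ case is unnecessary, since the equivalence is literally the recursive definition in \thref{def:sc1}.
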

\begin{proof}
	Part \ref{lem:coordinate_chart_scsmooth} essentially holds by definition: for every $p\in U$, we can use trivialize $U$ using $\varphi$ itself and $V$ using $\id_V$. In turn, the maps of \ref{lem:incl_scmfld_scsmooth} are locally given by the inclusions $V^k \hookrightarrow V^l$, where $\phi: U \isomapsto V \subset X_0$ is a coordinate chart, and these have $\scale$\hyp{}derivative $X^k \hookrightarrow X^l$ at every point in $V^{k+1}$. Similarly, the tangent bundle projection is locally given by the projection $V^1 \times X \mapsto V^1$. Finally, the tangent map of an $\scale^1$ map $f$ is given around $p\in M$ by $V^1 \times X \mapsto W^1 \times Y, \, (x,v) \mapsto (\psi\, f\restr{U}\, \phi^{-1}(x), \D(\psi\, f\restr{U}\, \phi^{-1})(x,v))$, where $\phi: U(p) \isomapsto V \subset X_0$ and $\psi: \Omega(f(p)) \isomapsto W \subset Y_0$ are charts with $f(U) \subset \Omega$.
\end{proof}

Generalizing partial differentiation of Section~\ref{sec:sc_calculus} is straightforward: for $\scale^\infty$\hyp{}manifolds $M$, $N$ and $P$, an $\scale^1$ map $f: M \times N \mapsto P$ and $(p,q)\in M_{m+1} \times N_{m+1}$, $m\in\setNz$, the partial derivative at $(p,q)$ with respect to the first argument is
\begin{equation}
	\frac{\partial f}{\partial p}(p,q) \eqdef \D_{(p,q)} f (\cdot, 0): \Tg_p M \mapsto \Tg_{f(p,q)} P \,,
\end{equation}
where we use the identification of \thref{lem:tg_scales_constructions}\ref{lem:tg_scale_product}. Naturally, a similar formula holds for differentiation with respect to the second argument. We also remark that when $M=\setR$, we regard $\frac{\partial f}{\partial p}(p,q)$ as an element of $(\Tg_{f(p,q)} P)_m$ by noting $\Tg_p \setR \iso \setR$ and applying 1 to the above map of scales.

As a final addendum to this section, we define sections of the tangent bundle (vector fields) in a similar manner as in the finite\hyp{}dimensional case.
\begin{definition} \thlabel{def:sections_sc_tg_bundle}
	An $\scale^k$ section of the tangent bundle of an $\scale^\infty$\hyp{}manifold $M$, $k\in\setNz\cup\{\infty\}$, is an $\scale^k$ map $s: M^1 \mapsto \Tg M$ with $\pi_{\Tg M}\circ s = \id_{M^1}$.
\end{definition}

\section{Hamiltonian Partial Differential Equations} \label{sec:hampde}

In this section, we carry the concepts of Hamiltonian vector fields and flows in finite\hyp{}dimensional symplectic geometry over to the $\scale$\hyp{}calculus framework. We first introduce the relevant concepts in the linear case of a Banach scale while being guided by the prototypical example of the free Schrödinger equation. Eventually, we arrive at the conclusion that an extension of $\scale$\hyp{}calculus is needed to handle Hamiltonian maps: strong $\scale$\hyp{}smoothness. After an extensive motivation, we define this new concept and show that it is invariant under pre\hyp{}composition with $\scale$\hyp{}smooth symplectomorphisms. This property makes strong $\scale$\hyp{}smoothness amenable to being used with $\scale^\infty$\hyp{}manifolds. All Banach scales in the presentation are assumed to be over the real numbers, or restricted by scalars when necessary.

After presenting the linear case, we generalize to $\scale^\infty$\hyp{}manifolds. We introduce symplectic $\scale^\infty$\hyp{}manifolds by restricting the atlas so that the transition maps are symplectic. Once a symplectic $\scale^\infty$\hyp{}manifold is given, we may extend the tangent structure of the manifold to support a symplectic form on each tangent scale, and we may also construct a cotangent bundle. We outline the necessary backbone for this, subsequently generalizing $\scale$\hyp{}smoothness, Hamiltonian vector fields and flows to this non-linear case. We illustrate the use of the new concepts with the free Schrödinger equation on a projective Hilbert space.

\subsection{Flows on Banach scales} \label{sec:banach_scales_flows}
Naturally, one needs to understand vector fields and flows on Banach scales before discussing \emph{Hamiltonian} vector fields and flows. We define and use these to formalize the free Schrödinger equation. For simplicity, we only work with complete autonomous vector fields in this paper. That is to say, we assume the vector fields to be time-independent and that their integral curves exist on the complete real line.

Let $X$ be a Banach scale on $\setNz$. We define an autonomous $\scale$\hyp{}smooth vector field on $X$ to be an $\scale^\infty$ map $V: X^1 \mapsto X$. We use the shifted scale $X^1$ for the domain, since the vector fields we are interested in Hamiltonian PDEs are densely defined ({\it e.g.}, see the prototypical \thref{ex:flow_free_schrodinger} further on). $V$ is said to be complete, or to have a global flow, if there exists an $\scale$-smooth map $\varphi: \setR \times X \mapsto X$ such that
\begin{align} \label{eq:sc_flow_equations}
	\frac{\partial \varphi}{\partial t}(t,v) &= V \circ \varphi(t,v) \,, &
	\varphi(0,v) &= v
\end{align}
for all $t \in\setR$ and $v\in X_{m+1}$, $m\in\setNz$, where $\frac{\partial \varphi}{\partial t}: \setR \times X^1 \mapsto X$ is the partial derivative of $\varphi$ in the scale sense. Clearly, it is sufficient that the equality holds for $v\in X_1$. Note here that the vector field can be recovered as $V = \frac{\partial \varphi}{\partial t}(0,\cdot)$. Moreover, taking \thref{prop:domain_R_scsmoothness} into account, if a global flow $\varphi$ exists, then the initial value problem
\begin{align} \label{eq:ivp_sc_flow}
	\frac{\d u}{\d t}	&= V \circ u: \setR \mapsto X_\infty \,, &
				u(0)	&= u_0
\end{align}
for a smooth unknown $u: \setR \mapsto X_{\infty}$ and initial condition $u_0 \in X_{\infty}$ has solution given by $t \mapsto \varphi(t,u_0)$.

\begin{example} \thlabel{ex:flow_free_schrodinger}
	The free Schrödinger equation
	\begin{equation}
		\i u_t = - \Delta u
	\end{equation}
	for $u: \setR \times S^1 \mapsto \setC, \, (t,x) \mapsto u(t,x)$, where $\Delta = (\cdot)_{xx}$ is the Laplacian operator, can be rewritten in evolution form by taking the (double\hyp{}spaced) Levi\hyp{}Sobolev scale $X_s = W^{2s,2}(S^1, \setC)$, $s\in\setNz$, and defining the vector field $V: X^1 \mapsto X, \, u \mapsto \i \Delta u$. Here, $X$ is seen as a real scale and the Laplacian $\Delta: X_{s+1} \mapsto X_s$ is taken in the weak sense, corresponding to the Fourier multiplier $[n \mapsto (\i n)^2] \in \setC^\setZ$. The evolution equation then simply reads $\dot u = V(u), ~ u(0) = u_0$ for a smooth curve $u: \setR \mapsto X_\infty$ and $u_0\in X_\infty$.
	
	We claim that the vector field $V$ is complete with $\scale$\hyp{}smooth flow given by
	\begin{equation} \label{eq:flow_free_schrodinger}
		\varphi: \setR \times X \mapsto X, \, (t,u) \mapsto \e^{\i t\Delta} u \,,
	\end{equation}
	where $\e^{\i t\Delta}$ is the bounded linear $X_s$ operator with Fourier multiplier $\e^{-\i t n^2}$. To see that $\varphi$ is $\scale^0$ to begin with, identify the scale $X$ with $\{l_\setC^{2,2s}\}_{s\in\setNz}$ using the Fourier series. For $x\in l^{2,2s}$ fixed and $N\in\setN$, the function $\setR \mapsto \setC^{2N-1} \subset l^{2,2s}, \, t \mapsto \{\e^{-\i t n^2} x_n\}_{|n| < N}$ is continuous and the expression
	\begin{equation}
		\sup_{t\in\setR} \| \{\e^{-\i t n^2} x_n\}_{|n| < N} - \varphi(t,x) \|_{l^{2,2s}}
		= \sup_{t\in\setR} \left(\sum_{|n|\geq N} |\e^{-\i t n^2} x_n|^2 (1+n^2)^{2s}\right)^\frac{1}{2}
	\end{equation}
	vanishes as $N\to\infty$, from where $\varphi(\cdot, u): \setR \mapsto X_s$ is continuous for all $u\in X_s$ by the uniform limit theorem. To prove joint continuity of $\varphi: \setR \times X_s \mapsto X_s$, just note that $\varphi$ is linear in the second argument, that
	\begin{multline}
		\|\varphi(t,u) - \varphi(t_0, u_0) \|_{X_s} \leq \\
		\leq \|\varphi(t,\cdot)\|_{\B(X_s)} \|u-u_0\|_{X_s} + \|\varphi(t,u_0) - \varphi(t_0, u_0) \|_{X_s}
	\end{multline}
	for $t, t_0\in\setR$ and $u,u_0\in X_s$, and that $\varphi(t,\cdot)$ is uniformly bounded in $\B(X_s)$ (by 1).

	As to the $\scale$\hyp{}smoothness claim, note that $\varphi(\cdot, u): \setR \mapsto X_0$ is $C^1$ for $u\in X_1$ with derivative $\d \varphi(\cdot, u)/\d t = \i \Delta \varphi(t, u)$. From this and the above mentioned linearity follows the suggestive $\scale$-derivative
	\begin{equation} \label{eq:dflow_free_schrodinger}
		\D \varphi: \setR \times X^1 \times \setR \times X \mapsto X ,\, (t, u, h, \xi) \mapsto \varphi(t,\xi) + h\, \i \Delta \varphi(t, u) \,.
	\end{equation}
	The required Frèchet condition of \eqref{eq:frechet_condition} is satisfied: for $t,h\in \setR$ and $u,\xi \in X_1$, we see in the estimate
	\begin{multline} \label{eq:frechet_flow_schrodinger}
		\frac{\| \varphi(t + h, u + \xi) - \varphi(t,u) - \D \varphi(t,u,h,\xi) \|_{X_0}}{|h| + \|\xi\|_{X_1}} \\
		\leq \left\| \frac{\varphi(t + h, u) - \varphi(t,u)}{h} - \i \Delta \varphi(t, u) \right\|_{X_0} + \| \varphi(t + h, \cdot) - \varphi(t,\cdot) \|_{\B(X_1,X_0)}
	\end{multline}
	that the first term vanishes as $h\to 0$ by the differentiability of $\varphi(\cdot, u)$, and that the second term vanishes by the compactness of the embedding $X_1 \hookrightarrow X_0$ and the continuity of $\varphi: \setR \times X_0 \mapsto X_0$ (same argument as in \cite[Lemma 2.6]{hofer08_polyfolds}). The $\scale$-smoothness of $\varphi$ then follows by bootstrapping, since $\i \Delta = V: X^1 \mapsto X$ is $\scale^0$ linear.
	
	Finally, from \eqref{eq:flow_free_schrodinger} and \eqref{eq:dflow_free_schrodinger}, it is clear that \eqref{eq:sc_flow_equations} holds.
\end{example}

We end this subsection with a brief note on existence and uniqueness of $\scale$-smooth flows. In general, it is a hard question whether a given $\scale$-smooth vector field has an (even only locally defined) flow. In fact, most references are careful when it comes to general well\hyp{}posedness of Hamiltonian PDEs, either assuming it in some form \cite{kuksin00,abbondandolo14,ammari16} or deducing it for specific Hamiltonian PDEs and under specific assumptions ({\it e.g.}, \cite{bourgain93,ozawa07,murphy16,schoberg70,bona88,wang02,castelli10}). In \cite{crespo17}, we give a simple counter-example for general existence, and hint at the breaking down of uniqueness as well (except for very controlled examples).

\subsection{Hamiltonian Flows on Symplectic Scales} \label{sec:symplectic_scales_hamflows}
In this central part of the section, our aim is to define, in a meaningful way, what it means for an $\scale$\hyp{}smooth vector field and flow to be Hamiltonian. To do so, we need to introduce a new notion of smoothness for Hamiltonian functions. With this new notion, a smooth real\hyp{}valued function generates an $\scale$\hyp{}smooth vector field by means of a symplectic structure and corresponding symplectic gradient relation, similarly to the finite\hyp{}dimensional case. We derive a chain rule for this notion which is valid while pre\hyp{}composing with $\scale^\infty$\hyp{}symplectomorphisms, and which enables its usage with $\scale^\infty$\hyp{}manifolds in the following section. In the following, we denote the restriction of a Banach scale $X$ on $\setZ$ to $\setNz$ by $X_{\geq 0}$.

We start with a symplectic Banach scale $(X, \omega)$ on $\setZ$. As in the free Schrödinger equation, we can only expect Hamiltonian functions for Hamiltonian PDEs to be densely defined and, as such, we need to work with scale maps $h: X_{\geq 0}^1 \mapsto \setR$. To motivate the need of a new smoothness concept, analyse the usual $\omega$-gradient relation used pointwise to obtain the vector field $V_h$ from the Hamiltonian $h$
\begin{equation} \label{eq:sc_symplectic_gradient}
	-\D h = \omega(\cdot, V_h) \,.
\end{equation}
In the scale framework, we wish to obtain a map $(V_h)_m: X_{m+1} \mapsto X_m$ for each $m\geq 0$. Since $\omega$ pairs $X_{-m}$ with $X_{m}$, the derivative $\D_x h$ should be an element of $X_{-m}^*$ for each $x\in X_{m+1}$. Hence, the ``new derivative'' should induce a map $(\D h)_m: X_{m+1} \times X_{-m} \mapsto \setR, \, (x,\xi) \mapsto \D_x h \cdot \xi$ for each $m\geq 0$ which is linear in the second argument. 

In principle, given a Hamiltonian function $h: X_{\geq 0}^1 \mapsto \setR$, it would be possible to use the theory by Hofer of Section~\ref{sec:sc_calculus} to define an $\scale$\hyp{}derivative $(\D h)_m: X_{m+1} \times X_m \mapsto \setR$, $m\geq 0$, since the condition of an $\scale^1$ map in \thref{def:sc1}\ref{def:sc1_defn} does not need the map to be defined on the zeroth layer. It is not difficult to double\hyp{}check the proofs in \cite{hofer08_polyfolds,hofer10} and see that the theory carries over {\it mutatis mutandis} for these densely\hyp{}defined maps. Nevertheless, comparing this derivative with the desired form of the last paragraph, we see that test vectors are taken from $X_m$ instead of $X_{-m}$. Since the former space is smaller (remember that $m\geq 0$), the Hofer $\scale^1$ requirement is not strong enough to obtain a scale map $V_h: X_{\geq 0}^1 \mapsto X$. Indeed, the only case where the test spaces match is $m=0$, and with the original scale theory, relation \eqref{eq:sc_symplectic_gradient} only provides a vector field $V_h: X_1 \mapsto X_0$ without any scale structure {\it a priori}. In contrast, in our concept, we allow the smoothness of test vectors to decrease as the smoothness of the differentiation point increases, thereby obtaining a scale structure on $V_h$.

For densely\hyp{}defined maps, we shall refer to the marginally modified Hofer $\scale$\hyp{}smoothness concept as densely\hyp{}defined $\scale$\hyp{}smoothness, and to our alternative as {\it strongly} densely\hyp{}defined $\scale$\hyp{}smoothness\footnote{This concept is disjoint from the definition of a strong $\scale^k$ map in \cite[Remark~1.3]{hofer17}: the latter is simply a map which is $C^k$ on each layer.}. For the latter, ``densely\hyp{}defined'' will be frequently omitted from the terminology, seen that this is the only kind of strong $\scale$\hyp{}smoothness this paper deals with. For clarity, we reproduce the definition of densely\hyp{}defined $\scale$\hyp{}smoothness and, subsequently, we introduce the new smoothness concept.

\begin{definition}
Let $X$ be a Banach scale on $\setNz$, and let $U \subset X$ be open. An $\scale^0$ map $h: U^1 \mapsto \setR$ is said to be densely\hyp{}defined $\scale^1$ if there exists an $\scale^0$ map $\D h: U^1 \times X \mapsto \setR$ which is linear in the second argument and such that for all $x\in U_1$
	\begin{equation} \label{eq:frechet_condition_bis}
		\lim_{t\to 0} \frac{|h(x+t) - h(x) - \D h(x, t)|}{\|t\|_{X_1}} = 0
	\end{equation}
	as $t \to 0\in X_1$. We use the notation $\D_x h \eqdef \D h(x,\cdot) \in X_m^*$ for $x\in U_{m+1}$, $m\in \setNz$. The map $h$ is densely\hyp{}defined $\scale^{k+1}$, $k\in\setN\cup\{\infty\}$ if it is densely\hyp{}defined $\scale^1$ and $\D h: U^1 \times X \mapsto \setR$ is $\scale^{k}$ (in the usual sense).
\end{definition}

\begin{definition} \thlabel{def:strong_sc_smoothness}
	Let $X$ be a reflexive Banach scale on $\setZ$, $U \subset X_{\geq 0}$ open and $h: U^1 \mapsto \setR$ be an $\scale^0$ map.
	\begin{enumerate}
		\item The map $h$ is called strongly densely\hyp{}defined $\scale^1$, or simply strongly $\scale^1$, if there exists an $\scale^0$ map $\D h: U^1 \mapsto X^*$ such that for all $x\in U_1$
		\begin{equation} \label{eq:frechet_condition_hamiltonian}
			\frac{|h(x+t) - h(x) - \D h(x)\cdot t|}{\|t\|_{X_1}} \to 0
		\end{equation}
		as $t\to 0$ in $X_1$. We use the notation $D_x h \eqdef \D h(x) \in X_{-m}^*$ for $x\in U_{m+1}$, $m\in\setNz$.
		\item The map $h$ is called strongly (densely\hyp{}defined) $\scale^{k+1}$ if it is strongly $\scale^1$ and $\D h: U^1 \mapsto X^*$ is $\scale^k$ in the original Hofer sense, $k\in\setN\cup\{\infty\}$.
	\end{enumerate}
\end{definition}
\begin{remark} \thlabel{rmk:strong_sc_smoothness}
	\begin{enumerate}
		\item \label{rmk:strong_sc_continuity} The reader might notice that for the definition of strongly $\scale^1$ maps, instead of requiring $\D h$ as above to be $\scale^0$, it would be more natural and compatible with the Hofer $\scale^1$ condition to require each map $U_{m+1} \times X_{-m} \mapsto \setR, (x,\xi) \mapsto \D_x h\cdot\xi$ to be continuous, $m\geq 0$. This weaker condition would suffice to prove the Frèchet condition \eqref{eq:frechet_condition_hamiltonian} in a chain rule scenario, but not the continuity of the derivative of the composed map (cf.~\thref{rmk:sc1minus_not_enough}).				
		\item If $h: U^1 \mapsto \setR$ is strongly $\scale^1$, then the one\hyp{}layer map $h: U_1 \mapsto \setR$ is $C^1$, since the inclusion $X_0^* \subset X_1^*$ is continuous. Also, by using the bonding (inclusion) maps $X_m \hookrightarrow X_{-m}$ for $m\geq 1$, the derivative $\D h: U^1 \mapsto X^*$ of a strongly $\scale^{1}$ map $h$ induces an $\scale^0$ map $U^1 \times X \mapsto \setR, \, (x,\xi) \mapsto \D_x h\cdot\xi$. Taking \eqref{eq:frechet_condition_hamiltonian} into account, we see that strongly $\scale^{1}$ maps $h$ are densely\hyp{}defined $\scale^1$. \thref{prop:relations_smoothness} expands on the relations between different smoothness concepts.
	\end{enumerate}
\end{remark}

In the same way as in Hofer scale calculus, one can prove that the derivative of a strongly $\scale^1$ map is unique, and that for a strongly $\scale^k$ map $h: U^1 \mapsto \setR$ and $V\subset U$ open, $h\restr{V_1}: V^1 \mapsto \setR$ is still strongly $\scale^k$ with $\D(h\restr{V_1}) = (\D h)\restr{V_1}: V^1 \mapsto X^*$. From this, one proves locality of the strong $\scale^k$ conditions, meaning that $h: U^1 \mapsto \setR$ is strongly $\scale^k$ if and only if for each $x\in U_1$, there exists an open neighbourhood $V(x) \subset U$ such that $h\restr{V_1}: V^1 \mapsto Y$ is strongly $\scale^k$.

As announced in the motivation of strong $\scale$\hyp{}smoothness, if $(X,\omega)$ is a symplectic Banach scale on $\setZ$, a strongly $\scale^{1}$ map $h:  X_{\geq 0}^1 \mapsto \setR$ induces an $\scale^0$ vector field $V_h: X_{\geq 0}^1 \mapsto X$ which is uniquely defined by the $\omega$-gradient relation \eqref{eq:sc_symplectic_gradient}, where the derivative $\D$ is in the strong sense. Since $\iota_\omega$ is an isomorphism of scales, $V_h$ is $\scale^k$ if and only if $h$ is strongly $\scale^{k+1}$, $k\in\setNz\cup\{\infty\}$. This leads to the following definition we have worked towards.
\begin{definition}
	Let $(X,\omega)$ be a symplectic Banach scale on $\setZ$. An $\scale$\hyp{}smooth vector field $V: X_{\geq 0}^1 \mapsto X$ is said to be Hamiltonian if there exists a strongly $\scale$\hyp{}smooth map $h: X_{\geq 0}^1 \mapsto \setR$ such that
	\begin{equation} \label{eq:sc_hamvect_symplectic_gradient}
		-\D h = \omega(\cdot, V)
	\end{equation}
	holds pointwise. If a Hamiltonian $\scale$\hyp{}smooth vector field $V$ has a global flow $\varphi: \setR \times X_{\geq 0} \mapsto X_{\geq 0}$, then the flow $\varphi$ is said to be Hamiltonian.
\end{definition}
\begin{example} \thlabel{ex:ham_free_schrodinger}
	For the Banach scale $X_s = W^{2s,2}(S^1, \setC)$, $s\in\setZ$, with its standard symplectic structure\footnote{recall that $X$ is seen as a real scale, and the inner product should be interpreted as the \emph{real\hyp{}valued} inner product of $X_0$.} $\omega = \langle \i \cdot, \cdot \rangle_0$, the vector field $V = \i \Delta$ and flow $\varphi(t,\cdot) = \e^{\i t\Delta}$ of \thref{ex:flow_free_schrodinger} are Hamiltonian. Indeed, consider $h: X_{\geq 0}^1 \mapsto \setR$ given by
	\begin{equation}
		h(u) = \frac{\|u_x\|_{0}^2}{2} = \frac{1}{2} \int_{S^1} |u_x(a)|^2 \,\d a \,,
	\end{equation}
	where $(\cdot)_x: X^1 \mapsto X$ is the weak differentiation operator (Fourier multiplier $[n \mapsto \i n] \in \setC^\setZ$). The Frèchet condition \eqref{eq:frechet_condition_hamiltonian} is satisfied with $\D_u h \cdot \xi = \langle u_x, \xi_x \rangle_0 = \int_{S^1} u_x(a) \cdot \xi_x(a)\, \d a$ for $u,\xi \in X_1$, and by integration by parts, this map extends to an $\scale^0$ map $\D h: X^1 \mapsto X^*, \, u \mapsto -\langle u_{xx}, \cdot \rangle_0$. Since $\D h$ happens to be linear, we conclude that $h$ is strongly $\scale$\hyp{}smooth. For $u\in X_{m+1}$, $m \geq 0$, we then have $\omega(\cdot, \i \Delta u) = \langle \cdot, u_{xx} \rangle_0 = -\D_u h$.
\end{example}

The following proposition clarifies the relationships between the several smoothness concepts used so far. Note in the proposition that a scale map $h: U \subset X_{\geq 0} \mapsto \setR$ which is $C^k$ on each layer satisfies $(1)$, and that a map which satisfies $(4)$ is $C^k$ on each layer as a map $h: U^k \mapsto \setR$.
\begin{proposition} \thlabel{prop:relations_smoothness}
	Let $X$ be a reflexive Banach scale on $\setZ$, $U \subset X_{\geq 0}$ be open, and $h: U \mapsto \setR$ be an $\scale^0$ map. We have the following implication diagram, where $A \Longrightarrow B$ means ``$A$ implies $B$'' and $A \Longdoesnotimply B$ means ``$A$ does in general not imply $B$'':
	\[ \begin{tikzcd}[arrows=Rightarrow]
		{} & {} & (5) \ar[notsrc]{d} & {} \\
		(1) \ar[notsrc]{r}\ar[equal,notsrc,notdst,shift left=5pt]{rru} & (2)\ar[notsrc]{r}\ar[equal,notsrc,notdst]{ru} & (3) \ar[notsrc]{r} & (4)
	\end{tikzcd} \]
	\noindent where, for $k\in\setN$, we label: \\
	(1) $h: U_m \mapsto \setR$ is $C^{m+1}$ for $m\in\{0,1,\ldots,k-1\}$; \\
	(2) $h: U \mapsto \setR$ is $\scale^k$; \\
	(3) $h: U^1 \mapsto \setR$ is densely\hyp{}defined $\scale^k$; \\
	(4) $h: U_m \mapsto \setR$ is $C^m$ for $m\in\{1,2,\ldots,k\}$; \\
	(5) $h: U^1 \mapsto \setR$ is strongly densely\hyp{}defined $\scale^k$.
\end{proposition}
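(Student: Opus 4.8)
The plan is to verify the four arrows that are drawn as implications --- $(1)\Rightarrow(2)$, $(2)\Rightarrow(3)$, $(3)\Rightarrow(4)$ and $(5)\Rightarrow(3)$ --- by reducing each to the characterizations already in hand, and then to block every remaining (reverse, or incomparability) arrow by exhibiting explicit counterexamples on the prototype Hilbert scale $\scf{l_\nu^2}$ of \thref{prop:l2s_scale}, equivalently on the Levi--Sobolev scale of \thref{ex:sobolev_scale}.

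\emph{The positive implications.} For $(1)\Rightarrow(2)$ the mechanism is bootstrapping via \thref{prop:sc1_alternative}. If $h\restr{U_m}$ is $C^{m+1}$ for $m<k$, then restricting along the continuous linear bonding maps shows $h\restr{U_m}$ is $C^{\min(m+1,k)}$ for \emph{every} $m\in\setNz$, and this ``triangular'' regularity is exactly what is needed to run \thref{prop:sc1_alternative} and its analogues for the iterated tangent maps $\Tg^jh$, $j\le k$: on each layer the derivative--extension clause (b) never asks for more than the derivative that $(1)$ supplies one level higher up, and clause (c) is the compact--open continuity of those same derivatives. For $(2)\Rightarrow(3)$ one only unwinds definitions: by \thref{prop:shift_scsmooth} a $\scale^k$ map $h:U\mapsto\setR$ restricts to $h\restr{U_1}:U^1\mapsto\setR$, the Fréchet condition in \thref{def:sc1}\ref{def:sc1_defn} is already posed at points of $U_1$, and \thref{cor:tf_df_sck} propagates this to all orders, so the very same $\D h:U^1\times X\mapsto\setR$ exhibits densely--defined $\scale^k$. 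For $(5)\Rightarrow(3)$ the case $k=1$ is recorded in \thref{rmk:strong_sc_smoothness}; for higher order, the derivative $\D h:U^1\mapsto X^*$ of a strongly $\scale^{k+1}$ map is $\scale^k$ in the Hofer sense, and composing it with the bonding inclusions $X\hookrightarrow X^*$ and the continuous bilinear (hence $\scale$--smooth, \thref{lem:sc_properties}\ref{lem:bilin_scsmooth}) evaluation pairing produces, through the chain rule \thref{prop:chain_scsmooth}, a $\scale^k$ map $U^1\times X\mapsto\setR$ linear in the second slot --- precisely the derivative demanded of a densely--defined $\scale^{k+1}$ map. Finally $(3)\Rightarrow(4)$ is \thref{prop:sc1_alternative} once more: a densely--defined $\scale^k$ map restricted to the $(m-1)$\th\ layer $U_m$ of $U^1$ is $C^1$ with extendable derivative, and iterating $m$ times yields $h\restr{U_m}\in C^m$ for $1\le m\le k$.

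\emph{The non-implications.} Two template examples on $\scf{l_\nu^2}$ do most of the work. First, the continuous linear functional $h(x)=\langle g,x\rangle_0$ for a fixed $g\in X_0\setminus X_1$: by \thref{lem:sc_properties}\ref{lem:lin_scsmooth} it is $\scale$--smooth as a map $U\mapsto\setR$, so it satisfies $(1)$--$(4)$, yet $\D_xh=\langle g,\cdot\rangle_0$ cannot lie in $X_{-1}^*$ --- that would force $g\in X_1$ by \thref{lem:l2s_isodual}\ref{lem:l2s_iso_expression} --- so $h\restr{U^1}$ is not even strongly $\scale^1$; this rules out the arrows into $(5)$ from $(1),(2),(3)$. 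Second, the translated norm $h(x)=\|x-p\|_0$ with $p\in X_0\setminus X_1$: it is $\scale$--smooth on $U$ because $\|\cdot\|_0$ is smooth away from $0$ and no filtration layer $U_m$ contains $p$, so $(2),(3),(4)$ hold, but $h\restr{U_0}$ has a kink at $p\in U_0$, so $(1)$ fails, witnessing $(2)\not\Rightarrow(1)$ (and again non-$(5)$, since $\D_xh=\langle(x-p)/\|x-p\|_0,\cdot\rangle_0$ pairs against a vector outside $X_1$). The remaining separations --- $(3)\not\Rightarrow(2)$, $(4)\not\Rightarrow(3)$, $(5)\not\Rightarrow(1)$, $(5)\not\Rightarrow(2)$ --- exploit the fact that $\scale^k$-ness and strong $\scale^k$-ness, by \thref{prop:sc1_alternative} and \thref{def:strong_sc_smoothness}, constrain $h$ only on the layers $U_1,U_2,\dots$ (never on $U_0$ beyond continuity) and only ``degrade by one derivative per layer''; the relevant examples are built from coordinatewise sums $h(x)=\sum_n c_n\,g_j(x_n)$ with $g_j(t)=t^{\,j}|t|$ (which is $C^j$ but not $C^{j+1}$) and coefficients $c_n$ calibrated against $\nu_n$, possibly composed with or shifted by the two templates above so that the roughness is visible to a lower layer but absorbed by the finer topology above it.

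\emph{Main obstacle.} The delicate part is the bookkeeping in these counterexamples. For each candidate one must check not merely layerwise Fréchet differentiability but also the compact--open continuity and the derivative--extension clauses of \thref{prop:sc1_alternative}, and --- to certify that strong $\scale$--smoothness fails or holds --- pin down exactly which dual space $\D_xh$ occupies as $x$ climbs the filtration. Calibrating the ratios $c_n/\nu_n$ in the $\scf{l_\nu^2}$-examples so that the intended separation holds (for instance ``$\scale^k$ yes, strongly $\scale^k$ no'', without accidentally gaining or losing a derivative somewhere) is where the estimates become fiddly; the remark \thref{rmk:sc1minus_not_enough} flagged after \thref{rmk:strong_sc_smoothness} confirms that the superficially natural ``separately continuous derivative'' weakening of strong $\scale$--smoothness is genuinely too weak, and a variant of that same construction underlies several of the arrows denied here.
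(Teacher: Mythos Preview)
Your treatment of the four positive implications is essentially the paper's: $(1)\Rightarrow(2)$ and $(3)\Rightarrow(4)$ via \thref{prop:sc1_alternative}, $(2)\Rightarrow(3)$ by unwinding, and $(5)\Rightarrow(3)$ by composing $\D h$ with the $\scale^0$ bilinear evaluation $X^*\times X_{\geq 0}\to\setR$. Your first template $h(x)=\langle g,x\rangle_0$ with $g\in X_0\setminus X_1$ is exactly the paper's example for $(1)\not\Rightarrow(5)$. Your second template $h(x)=\|x-p\|_0$ with $p\in X_0\setminus X_1$ is genuinely different from the paper's choice for $(2)\not\Rightarrow(1)$ --- the paper uses $h(t,u)=\langle\e^{\i t\Delta}u,v\rangle_{0,\setR}$ on $\setR\times W^{2\bullet,2}$ --- and yours is arguably cleaner: since $p\notin X_m$ for $m\geq 1$, each $h\restr{U_m}$ is $C^\infty$ and hence $h$ is $\scale^\infty$, while $h_0$ has a kink at $p$.

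The gap is in your handling of the remaining separations. Saying the examples ``are built from coordinatewise sums $\sum_n c_n\,g_j(x_n)$ with $g_j(t)=t^j|t|$ and coefficients calibrated against $\nu_n$'' is not a construction; you have not exhibited a single such $h$ or verified any of the four clauses of \thref{prop:sc1_alternative} for it. In particular, for $(5)\not\Rightarrow(2)$ the obstruction is of a different nature than the $C^j$-but-not-$C^{j+1}$ roughness your template encodes: one needs $\D h:U^1\to X^*$ to be genuinely $\scale$-smooth (a strong positive condition), while $h$ fails to be $\scale^k$ on $U$. The paper uses the free Schr\"odinger Hamiltonian $h(u)=\tfrac12\|u_x\|_0^2$, whose derivative $u\mapsto -\langle u_{xx},\cdot\rangle_0$ is linear (hence trivially $\scale^\infty$), but which is not even continuous in the $X_0$-topology and so cannot be extended to satisfy $(2)$. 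For $(4)\not\Rightarrow(3)$ the paper's example is $h(x)=\tfrac12\|x\|_1^2$: it is $C^\infty$ on every $U_m$, $m\geq 1$, but $\d h(x)=\langle x,\cdot\rangle_1$ does not extend to $X_0^*$ when $x\in X_1$ is not sufficiently regular. Both are one-line constructions with one-line verifications; your coordinatewise sums would require substantial calibration to achieve the same, and you have not carried it out. Once these two are in hand, $(3)\not\Rightarrow(2)$ and $(5)\not\Rightarrow(1)$ follow formally from $(5)\Rightarrow(3)$ and $(1)\Rightarrow(2)$.
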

\begin{proof}
	$(1) \Rightarrow (2)$ and $(3) \Rightarrow (4)$ are proved in \cite[Proposition 2.4]{hofer10} and \cite[Proposition 2.3]{hofer10}, respectively. $(2) \Rightarrow (3)$ is direct from the definition. To prove $(5) \Rightarrow (3)$, note that we have an $\scale^0$ bilinear evaluation map $\ev: X^* \times X_{\geq 0} \mapsto \setR, \, (T \in X_{-m}^*, x \in X_m) \mapsto T\restr{X_m}(x)$, $m\geq 0$. Consequently, the $\scale^{k-1}$ derivative $\D h: U^1 \mapsto X^*$ induces an $\scale^{k-1}$ derivative $\ev \circ(\D h \times \id_{X_{\geq 0}}): U^1 \times X \mapsto \setR$ which satisfies the Frèchet condition by hypothesis.
	
	To prove the counter\hyp{}implications, we provide four counter\hyp{}examples. Firstly, with $X_s = l^{2,s}_\setR$, $s\in\setZ$, define $h: X_{\geq 0} \mapsto \setR, \, x \mapsto \langle x, y \rangle_0$ for some $y \in X_0 \setminus X_{1/2}$. Here, we use rational indices to the scale in an informal manner: the expression \eqref{eq:l2s_space} for $l^{2,s}$ is actually defined for every $s\in\setR$. With this definition, $h: X_m \mapsto \setR$ is $C^\infty$ for all $m\in\setNz$, and from $\d h(x) \cdot \xi = \langle \xi, y \rangle_0$, $x,\xi \in X_0$, it is easy to see that $\d h(x): X_0 \subset X_{-1} \mapsto \setR$ is not continuous for $x\in X_2$. This proves $(1) \Doesnotimply (5)$.
	
	The second counter\hyp{}example is similar and proves $(4) \Doesnotimply (3)$. Define $h: X_{\geq 0}^1 \mapsto \setR, \, x \mapsto \frac{1}{2} \|x\|_1^2$ for the same scale $X$. We have $h: X_m \mapsto \setR$ is $C^\infty$ for all $m\geq 1$ but the derivative $\d h(x) = \langle x, \cdot \rangle_1: X_1 \mapsto \setR$ cannot be extended to a continuous linear map $X_0 \mapsto \setR$ if we take $x\in X_1\setminus X_{3/2}$.
	
	Thirdly, to prove $(5) \Doesnotimply (2)$, let now $X_s = W^{2s,2}(S^1, \setC)$, $s\in\setZ$. The Hamiltonian for the free Schrödinger equation in \thref{ex:ham_free_schrodinger} is densely\hyp{}defined $\scale$\hyp{}smooth but cannot be extended to a map $h: X_{\geq 0} \mapsto \setR$ satisfying $(2)$, since it is not even continuous with respect to the topology of $X_0$.
	
	Last but not least, we prove $(2) \Doesnotimply (1)$. For the scale $X$ of the last paragraph, let $h: \setR \times X_{\geq 0} \mapsto \setR, \, (t,u) \mapsto \langle \e^{\i t \Delta} u, v\rangle_{0,\setR}$, where $v \in X_0\setminus X_{1/2}$. This map is $\scale$\hyp{}smooth but its zeroth layer $h_0: \setR \times X_0 \mapsto \setR$ is not $C^1$. Indeed, if that was the case, the $\scale$\hyp{}derivative $\D h: \setR \times X_{\geq 0}^1 \times \setR \times X_{\geq 0} \mapsto \setR$ would be such that $(\D h)_0(0,\cdot,1,0): X_1 \subset X_0 \mapsto \setR, \, u \mapsto \langle \i\Delta u, v \rangle_0$ is continuous, which is not the case. The remaining counter\hyp{}implications are a consequence of these four.
\end{proof}

\thref{def:strong_sc_smoothness} introduced the concept of strong $\scale$\hyp{}smoothness on a reflexive Banach scale $X$ on $\setZ$. To generalize this concept to $\scale^\infty$\hyp{}manifolds later on, we need it to be invariant under $\scale$\hyp{}smooth coordinate changes. For $U \subset X_{\geq 0}$ and $V \subset Y_{\geq 0}$ open, a strongly $\scale^k$ map $h: V^1 \mapsto \setR$, $k\in \setN\cup\{\infty\}$, and an $\scale^\infty$\hyp{}diffeomorphism $f: U \isomapsto V$, it is then the question whether the composition $h\circ f: U^1 \mapsto \setR$ is also strongly $\scale^{k}$. To answer this question positively we need, for each $x\in X_{m+1}$, $m\geq 0$, to map $\D_{f(x)} h \in Y_{-m}^*$ to an element $\D_x(h\circ f) \in X_{-m}^*$ to be defined. If $\D_x f$ existed as a continuous linear map $X_{-m} \mapsto Y_{-m}$, we could take its adjoint for this, obtaining $\D_x(h\circ f) = \D_{f(x)} h \circ \D_x f$ as usual. Nevertheless, $f$ only defines a scale morphism $\D_x f: X\restr{m+1} \mapsto Y\restr{m+1}$ for the non\hyp{}negative indices $m+1 = \{0,1,\ldots, m\}$.

To solve the problem raised above, we need to extend $\D_x f$ to a scale morphism on $\{-m, -m+1, \ldots, m\}$. If we assume that we have symplectic structures $\omega$ and $\eta$ on $X$ and $Y$, respectively, then, by the discussion at the end Section \ref{sec:banach_scales}, we can use these and $(\D_x f)^{-1}: Y\restr{m+1} \mapsto X\restr{m+1}$ to obtain a scale morphism $((\D_x f)^{-1})^{\eta,\omega}: X\restr{\{-m, -m+1, \ldots, 0\}} \mapsto Y\restr{\{-m, -m+1, \ldots, 0\}}$. In order that the morphisms $\D_x f$ and $((\D_x f)^{-1})^{\eta,\omega}$ glue together to a morphism on $\{-m, -m+1, \ldots, m\}$, we need them to coincide on the zeroth layer, {\it i.e.}, $(\D_x f)_0 = ((\D_x f)^{-1})^{\eta,\omega}_0: X_0 \mapsto Y_0$. This condition precisely means that $\D_x f$ should be a linear symplectomorphism of scales for all $x\in X_{m+1}$ and $m\in\setNz$. Of course, it is enough to require this condition for $m=0$. The following definition and proposition solidify this discussion.

\begin{definition}
	Let $(X,\omega)$ and $(Y,\eta)$ be symplectic Banach scales on $\setZ$ and $U\subset X_{\geq 0}$ be open. An $\scale$\hyp{}smooth map $f: U \subset X_{\geq 0} \mapsto Y_{\geq 0}$ is called symplectic whenever $\D_x f: X_0 \mapsto Y_0$ is symplectic for all $x\in U_1$, that is,
	\begin{equation}
		\eta(\D_x f\cdot v, \D_x f\cdot w) = \omega(v, w)
	\end{equation}
	for all $v,w \in X_0$ and $x\in U_1$ (or equivalently, $v,w \in X_\infty$ and $x \in U_\infty$). It is an $\scale^\infty$\hyp{}symplectomorphism if, in addition, there exists $V\subset Y_{\geq 0}$ open with $f(U) = V$ and $f: U \mapsto V$ is an $\scale^\infty$\hyp{}diffeomorphism.
\end{definition}

\begin{proposition} \thlabel{prop:chain_rule_strong_sck}
	(Chain rule for strong $\scale$\hyp{}maps) Let $(X, \omega)$ and $(Y, \eta)$ be symplectic Banach scales on $\setZ$, $U\subset X_{\geq 0}$ and $V\subset Y_{\geq 0}$ be open, $h: V^1 \mapsto \setR$ be an $\scale^0$ map and $f: U \isomapsto V$ be an $\scale^\infty$\hyp{}symplectomorphism. Then, for $k\in \setN \cup \{\infty\}$, $h$ is strongly $\scale^{k}$ if and only if $h \circ f: U^1 \mapsto \setR$ is so. If this is the case, the chain rule
	\begin{equation} \label{eq:chain_rule_strong_sc}
		\D_x(h\circ f) = \D_{f(x)} h \circ ((\D_x f)^{-1})^{\eta, \omega}
	\end{equation}
	holds for all $x \in U_{m+1}$, $m\in\setNz$.
\end{proposition}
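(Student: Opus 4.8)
The plan is to prove the ``if'' direction for $k=1$ by hand, bootstrap to $k\geq2$ (and $k=\infty$), and obtain the ``only if'' direction by applying the ``if'' direction to $g\eqdef f^{-1}$, which is again an $\scale^\infty$\hyp{}symplectomorphism $V\isomapsto U$ (since $\D_y g=(\D_{g(y)}f)^{-1}$ and the inverse of a linear symplectomorphism of $X_0$ is one again), together with the $\scale^0$ map $h\circ f$, yielding $h=(h\circ f)\circ g$. So assume $h$ is strongly $\scale^{k}$. As $f$ is an $\scale^\infty$\hyp{}diffeomorphism, the chain rule \thref{prop:chain_scsmooth} applied to $g\circ f=\id_U$ and $f\circ g=\id_V$ shows that for $x\in U_{m+1}$ the morphism $\D_x f\colon X\restr{m+1}\isomapsto Y\restr{m+1}$ is an $\scale$\hyp{}isomorphism with inverse $\D_{f(x)}g$, and by \thref{prop:shift_scsmooth} $f$ restricts to $\scale^\infty$\hyp{}diffeomorphisms $U^j\mapsto V^j$. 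Since $f$ is symplectic, $(\D_x f)_0$ is a linear symplectomorphism of $X_0$, hence so is $(\D_x f)_0^{-1}=(\D_{f(x)}g)_0$, so the symplectic adjoint $((\D_x f)^{-1})^{\eta,\omega}$ agrees with $\D_x f$ on the zeroth layer. By the gluing argument recalled just before the proposition, $\D_x f$ and $((\D_x f)^{-1})^{\eta,\omega}$ thus combine into an $\scale$\hyp{}isomorphism $T_x\colon X\restr{\{-m,\dots,m\}}\isomapsto Y\restr{\{-m,\dots,m\}}$ extending $\D_x f$, with $(T_x)_{-j}=((\D_x f)^{-1})^{\eta,\omega}_{-j}$ for $0\leq j\leq m$.

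I would then define the candidate derivative $G\colon U^1\mapsto X^*$ by $G_m(x)\eqdef\D_{f(x)}h\circ(T_x)_{-m}\in X_{-m}^*$ for $x\in U_{m+1}$; compatibility of its layers is immediate from $T_x$ being a morphism and $\D h$ a scale map. The delicate point --- and the main obstacle --- is to show $G$ is $\scale^0$: one must resist inverting $\D_x f$ directly, since $\scale^0$\hyp{}ness of $\D f$ only gives continuity of $x\mapsto\D_x f$ into $\B(X_m,Y_m)$ for the compact\hyp{}open, not the norm, topology. Instead I would use the symplectic structures to identify $X_{-m}^*\iso X_m$ via $(\iota_\omega)_m$ and $Y_{-m}^*\iso Y_m$ via $(\iota_\eta)_m$; a direct computation with the defining relation of the symplectic adjoint and skew\hyp{}symmetry then gives
\begin{equation*}
	G_m(x)=(\iota_\omega)_m\circ(\D_{f(x)}g)_m\circ(\iota_\eta)_m^{-1}\bigl(\D_{f(x)}h\bigr),
\end{equation*}
an expression involving only evaluations of $\D g$, not inversions. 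Since $x\mapsto\D_{f(x)}h$ is continuous $U_{m+1}\mapsto Y_{-m}^*$ (composition of the $\scale^0$ maps $f$ and $\D h$), the $\iota$\hyp{}maps are fixed isomorphisms, and $(y,u)\mapsto(\D_y g)_m u$ is jointly continuous $V_{m+1}\times Y_m\mapsto X_m$ because $\D g$ is $\scale^0$, the composite $G_m$ is continuous; hence $G$ is $\scale^0$.

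For the Frèchet estimate I would invoke that strongly $\scale^1$ maps are densely\hyp{}defined $\scale^1$ (\thref{rmk:strong_sc_smoothness}) together with the densely\hyp{}defined analogue of the chain rule \thref{prop:chain_scsmooth}: these give that $h\circ f$ is densely\hyp{}defined $\scale^1$ with densely\hyp{}defined $\scale$\hyp{}derivative $(x,t)\mapsto\D_{f(x)}h\cdot(\D_x f\cdot t)$, whose zeroth layer at $x\in U_1$ equals $G_0(x)\cdot t$ (as $(T_x)_0=(\D_x f)_0$). So the Frèchet condition \eqref{eq:frechet_condition_hamiltonian} holds for $h\circ f$, and combined with $G$ being $\scale^0$ and uniqueness of the strong derivative this shows $h\circ f$ is strongly $\scale^1$ with $\D(h\circ f)=G$, i.e.\ \eqref{eq:chain_rule_strong_sc}. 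For $k\geq2$ I would bootstrap via the identity
\begin{equation*}
	\iota_\omega^{-1}\circ G=B\circ\bigl(\id_{U^1}\diagprod(\iota_\eta^{-1}\circ\D h\circ f)\bigr),
\end{equation*}
where $B\colon U^1\times Y\mapsto X$, $B(x,u)=\D_{f(x)}g\cdot u$, is $\scale^\infty$ (it is $\D g$ precomposed with $f\times\id_Y$, both $\scale^\infty$). As $h$ is strongly $\scale^k$, the map $\D h\colon V^1\mapsto Y^*$ is $\scale^{k-1}$ in the Hofer sense, so $\iota_\eta^{-1}\circ\D h\circ f$ is $\scale^{k-1}$, hence so is the diagonal product and its composition with $B$; composing with the $\scale^\infty$ map $\iota_\omega$ shows $G$ is $\scale^{k-1}$, i.e.\ $h\circ f$ is strongly $\scale^k$. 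The case $k=\infty$ follows since this holds for every finite $k$.
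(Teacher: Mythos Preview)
Your proof is correct and, for the regularity of the candidate derivative and the bootstrap to higher $k$, follows exactly the paper's route: both you and the paper use the $\iota_\omega$/$\iota_\eta$ identifications to rewrite $\D_{f(x)}h\circ((\D_x f)^{-1})^{\eta,\omega}$ in terms of $\D g=\D(f^{-1})$ rather than an inverse of $\D f$ (the paper packages this as the commutative square \eqref{diag:sympl_adj_dfinv} and the composite $(\D(f^{-1})^{\eta,\omega})^*\circ(\id_{V^1}\times\D h)\circ\diag_{V^1}\circ f\restr{U_1}$, which unwinds to your $\iota_\omega\circ B\circ(\id_{U^1}\diagprod(\iota_\eta^{-1}\circ\D h\circ f))$), thereby sidestepping the inversion issue you correctly flag. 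The one substantive difference is the verification of the Fr\'echet condition~\eqref{eq:frechet_condition_hamiltonian}: you reduce it to the densely\hyp{}defined analogue of \thref{prop:chain_scsmooth}, invoking the paper's informal remark that Hofer's theory carries over \emph{mutatis mutandis} to densely\hyp{}defined maps, whereas the paper proves this estimate directly via the fundamental theorem of calculus and the same two\hyp{}term splitting as in Hofer's original chain rule, using continuity of $\D h\colon V_1\to Y_0^*$ and compactness of $\{\D_x f\cdot t/\|t\|_{X_1}:t\neq 0\}\subset Y_0$. Your shortcut is legitimate within the paper's framework but treats that densely\hyp{}defined chain rule as a black box; the paper's argument is self\hyp{}contained at the cost of reproving that piece in situ.
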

\begin{proof}
	Clearly, it suffices to prove the ``only if'' part, since $f^{-1}$ is also an $\scale^\infty$\hyp{}symplectomorphism. Starting with the regularity of the candidate derivative \eqref{eq:chain_rule_strong_sc}, apply the adjoint construction $((\cdot)^{\eta, \omega})^*$ pointwise to $\D (f^{-1}): V^1 \times Y \mapsto X$ to obtain a scale map $(\D (f^{-1})^{\eta, \omega})^*: V^1 \times Y^* \mapsto X^*$ given by the diagram
	\begin{equation} \label{diag:sympl_adj_dfinv}
		\begin{tikzcd}[column sep=large]
			V^1 \times Y \ar{r}{\D (f^{-1})} \ar{d}{\sim}[swap]{\id_{V^1} \times \iota_\eta}	& X \ar{d}{\iota_\omega}[swap]{\sim} \\
			V^1 \times Y^* \ar[dashed]{r}{(\D (f^{-1})^{\eta, \omega})^*}								& X^*
		\end{tikzcd} \,.
	\end{equation}
	Since $f$ and the vertical maps in the diagram are $\scale^\infty$\hyp{}diffeomorphisms, it follows that $(\D (f^{-1})^{\eta, \omega})^*$ is $\scale$\hyp{}smooth. By \eqref{eq:chain_rule_strong_sc} and the chain rule for $f$ of \thref{prop:chain_scsmooth}, we have $\D (h\circ f) = (\D (f^{-1})^{\eta, \omega})^* \circ (\id_{V^1} \times \D h) \circ \diag_{V^1} \circ f\restr{U_1}$. Consequently, $\D (h\circ f)$ is as smooth as $\D h$ is.

	To prove the Frèchet condition \eqref{eq:frechet_condition_hamiltonian}, we first note that since $h: V_1 \mapsto \setR$ is $C^1$, the fundamental theorem of calculus together with the hypothesis that $f$ is symplectic gives for $x\in U_1$ and $t\in X_1$ small 
	\begin{flalign}
		& h(f(x+t)) - h(f(x)) - \D h (f(x)) \circ (\D f(x)^{-1})^{\eta, \omega} \cdot t \nonumber \\
		& \; = \int_0^1 \D h(a f(x+t) + (1-a) f(x)) \cdot (f(x+t)-f(x))\, \d a \\
		& \;\quad - \D h (f(x)) \circ \D f(x) \cdot t \nonumber \\
		& \; = \int_0^1 \D h(a f(x+t) + (1-a) f(x)) \cdot (f(x+t)-f(x) - \D f(x)\cdot t) \,\d a  \label{eq:chain_rule_strong_sc_twoterms} \\
		& \;\quad + \int_0^1 [\D h(a f(x+t) + (1-a) f(x)) - \D h (f(x))] \circ \D f(x) \cdot t\, \d a \,. \nonumber
	\end{flalign}
	The remainder of the proof is similar to the original proof for $\scale^1$ maps \cite[Theorem~2.16]{hofer06}. The integrand of first term in \eqref{eq:chain_rule_strong_sc_twoterms} divided by $\|t\|_{X_1}$ converges to 0 uniformly in $a \in [0,1]$ as $t\to 0$ in $X_1$ due to the $\scale$\hyp{}differentiability of $f$ and the continuity of $\D h: V_1 \mapsto Y_0^*$. In turn, the second integrand term divided by $\|t\|_{X_1}$ converges uniformly to 0 due to the compactness of $\{\D_x f \cdot \frac{t}{\|t\|_1}: t \in X_1\setminus\{0\}\} \subset Y_0$ and again the continuity of $\D h$.
\end{proof}
\begin{remark} \thlabel{rmk:sc1minus_not_enough}
	As hinted in \thref{rmk:strong_sc_smoothness}\ref{rmk:strong_sc_continuity}, we could have loosened the $\scale^0$ continuity of $\D h$ to the requirement that $(\D h)_m: V_{m+1} \times Y_{-m} \mapsto \setR, (x,\xi) \mapsto \D_x h\cdot\xi$ be continuous for each $m\geq 0$, and the last paragraph of this proof would still hold as in the original proof of Hofer. Nevertheless, this weaker requirement would need the continuity of $\D (f^{-1})_m: V_{m+1} \mapsto \B(Y_m, X_m)$ with respect to a stronger topology on $\B(Y_m, X_m)$ than the compact\hyp{}open topology to prove the continuity of $(\D(h\circ f))_m: U_{m+1} \times X_{-m} \mapsto \setR$. The issue here is that for this alternative definition, we would need to endow each space in the Banach scales $X^*$ and $Y^*$ with the compact\hyp{}open topology, with the consequence that the vertical maps in \eqref{diag:sympl_adj_dfinv} would not be levelwise homeomorphisms anymore.
\end{remark}

\subsection{Hamiltonian Flows on Symplectic Scale Manifolds} \label{sec:sc_mfld_flows}
In this subsection, we generalize the concepts of Section~\ref{sec:symplectic_scales_hamflows} to the case of an $\scale^\infty$\hyp{}manifold. To accomplish this task, we need to introduce new structures on the manifolds, such as an extension of the tangent scales to negative indices and cotangent bundles. It turns out that the crucial requirement to enable this is that the transition maps are symplectic. This condition gives rise to the concept of a symplectic $\scale^\infty$\hyp{}manifold, where we can define the new objects appealing to the local model by means of a coordinate chart. Due to the assumption on the transition maps, the result is independent of the coordinate chart used to define the structure. Once the desired structures are formed, we obtain an elegant, direct and natural generalization of strong $\scale$\hyp{}smooth maps, Hamiltonian vector fields and Hamiltonian flows for the case of $\scale^\infty$\hyp{}manifolds.

We begin directly by defining symplectic $\scale$\hyp{}smooth manifolds and presenting the relevant example. Again, for a Banach scale $X$ on $\setZ$, $X_{\geq 0} = X\restr\setNz$.
\begin{definition}
	Let $(X,\omega)$ be a symplectic Banach scale on $\setZ$ and let $M$ be an $\scale^\infty$\hyp{}manifold locally modeled on $X_{\geq 0}$.
	\begin{enumerate}
		\item Two coordinate charts $(U,\phi)$ and $(U', \psi)$ of $M$ are said to be symplectically compatible if the transition map $\psi\circ\phi^{-1}: \phi(U\cap U') \mapsto \psi(U\cap U')$ is an $\scale^\infty$\hyp{}symplectomorphism of $(X,\omega)$.
		\item A symplectic atlas for $M$ is an atlas $\mathcal{A} = \{(U_a,\phi_a)\}_{a\in A}$ for the $\scale$-smooth structure of $M$ such that $(U_a,\phi_a)$ and $(U_b,\phi_b)$ are symplectically compatible for all $a,b\in A$.
		\item If a symplectic atlas $\mathcal{A}$ for $M$ exists, it is contained in a unique maximal symplectic atlas $\bar{\mathcal{A}}$. The pair $(M,\bar{\mathcal{A}})$ is then said to be a symplectic $\scale^\infty$\hyp{}manifold locally modeled on $(X,\omega)$, and $\bar{\mathcal{A}}$ is its symplectic $\scale$\hyp{}smooth structure. Usually, the latter is suppressed from notation.
	\end{enumerate}
\end{definition}
Unless otherwise stated, we always take coordinate charts of a symplectic $\scale^\infty$\hyp{}manifold from its symplectic $\scale$\hyp{}smooth structure, whence the transition maps are always assumed to be symplectic.

\begin{example} \thlabel{ex:symplectic_projective_hilbert}
	The projectivization of a complex separable Hilbert space $X$ of \thref{ex:projective_hilbert} is a symplectic $\scale^\infty$\hyp{}manifold if we endow the induced scale $\scf{X}$ with its standard symplectic structure. Again $X = l_\setC^2$ without loss of generality, since the isometric isomorphism $\scf{X} \isomapsto \scf{l_\setC^2}$ is symplectic (by definition). With $U_a$ as in \thref{ex:projective_hilbert}, $a\in\setZ$, and being $B = \{x \in X_0: \|x\|_0 < 1\}$ the unit ball of $X=X_0$, we can define a symplectic atlas $\{(U_a,\psi_a)\}_{a\in\setZ}$ with $\psi_a: U_a \isomapsto B \subset X_0$ given by
	\begin{equation} \label{eq:darboux_charts_projective_hilbert}
		\psi_a([x])_n = \frac{|x_a|}{x_a \|x\|_0} \cdot
		\begin{cases}
			x_n			&\txt{if}~n<a \\
			x_{n+1}		&\txt{if}~n\geq a \,.
		\end{cases}
	\end{equation}
\end{example}

For a symplectic $\scale^\infty$\hyp{}manifold $M$ modeled on $(X,\omega)$ and $p\in M_{m+1}$, $m\in \setNz\cup\{\infty\}$, choose a coordinate chart $(U, \phi)$ of $M$ around $p$. By following the procedure of \thref{rmk:symplectic_scale_extension} with the induced isomorphism of scales $\D_p\phi: \Tg_p M \isomapsto X\restr{m+1}$, we can extend the tangent scale $\Tg_p M$ to a scale on $\{-m, -m+1, \ldots, m\}$ (on $\setZ$ if $m=\infty$). Recall that this is done by declaring $(\Tg_p M)_s \eqdef (\Tg_p M)_{-s}^*$ and $(\D_p\phi)_s \eqdef ((\D_p\phi)_{-s}^{~-1})^*: (\Tg_p M)_s \isomapsto X_{-s}^* \stackrel{\iota_\omega}{\mapsfrom} X_s$ for $-m\leq s <0$, subsequently pulling back the bonding maps of $X\restr{\{-m,-m+1,\ldots,0\}}$. Due to the invariance of the transition maps of $M$ under $\omega$, this is a well\hyp{}defined procedure which is independent of the chosen chart $(U, \phi)$. Furthermore, since $X$ is a reflexive scale, $\Tg_p M$ is reflexive as well, whence it admits a dual scale $\Tg^*_p M$. It is then clear that we can pull back the symplectic scale structure of $X$ using the diagram
	\begin{equation} \label{eq:symplectic_omegap} \begin{tikzcd}
		\Tg_p M \ar[dashed]{r}{\iota_{\omega_p}}[swap]{\sim} \ar{d}{\sim}[swap]{\D_p \phi} & \Tg_p^* M \ar{d}{((\D_p \phi)^{-1})^*}[swap]{\sim} \\
		X \ar{r}{\iota_\omega}[swap]{\sim} & X^*
	\end{tikzcd} \,, \end{equation}
where $X$ in the lower row are restricted to $\{-m,-m+1,\ldots,m\}$. Indeed, from this diagram we recover skew\hyp{}symmetric bilinear maps $\omega_p \eqdef (v,w) \mapsto \iota_{\omega_p}(w)\cdot v: (\Tg_p M)_{-s} \times (\Tg_p M)_{s} \mapsto \setR$, $s\in \{-m,-m+1,\ldots,m\}$, 
which make $(\Tg_p M, \omega_p)$ into a symplectic Banach scale.

A structure which follows from extended tangent scales is the cotangent bundle. Once we define this bundle, we rewrite the newly constructed isomorphism in \eqref{eq:symplectic_omegap} globally. Also, we introduce $\scale^k$ sections on the cotangent bundle in the usual manner.
\begin{proposition}
	Let $M$ be a symplectic $\scale^\infty$\hyp{}manifold locally modeled on $(X,\omega)$. Then, the cotangent bundle
	\begin{equation}
		\Tg^* M \eqdef \cup_{p\in M_1} \{p\}\times (\Tg_p M)_0^*
	\end{equation}
	is an $\scale^\infty$\hyp{}manifold locally modeled on $X_{\geq 0}^1 \times X^*$. Its induced filtration is $(\Tg^* M)_m = \cup_{p\in M_{m+1}} \{p\} \times (\Tg_p M)^*_{-m}$, $m\in\setNz$, where $(\Tg_p M)^*_{-m} \subset (\Tg_p M)^*_0$ via the adjoint of the bonding map $(\Tg_p M)_0 \hookrightarrow (\Tg_p M)_{-m}$. Also, the bundle projection $\pi_{\Tg^* M}: \Tg^* M \mapsto M^1, \, (p,\alpha) \mapsto p$ is a surjective $\scale$\hyp{}smooth submersion and the fiber $\pi_{\Tg^* M}^{-1}(p)$ over $p\in M_{m+1}$ is $\{p\} \times (\Tg_p M)^*_0 \iso (\Tg_p M)^*_0$ with the scale structure of $(\Tg^*_p M)_{\geq 0}$: $(\Tg_p M)^*_{-m} \subset (\Tg_p M)^*_{-m+1} \subset \ldots \subset (\Tg_p M)^*_0$.
\end{proposition}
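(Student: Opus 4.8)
The plan is to build $\Tg^* M$ by mimicking, layer for layer, the construction of the tangent bundle $\Tg M$ given above, the one genuinely new point being that the symplectic compatibility of the atlas of $M$ is exactly what is needed to make the fibrewise part of the transition maps well defined and $\scale$-smooth on the \emph{negative} layers of $X$. Throughout I use that $X$ is reflexive (being a symplectic scale), so that $X^*$ exists, and that each $\Tg_p M$ is reflexive, so that $\Tg_p^* M$ exists; I also use the extension of the tangent scale $\Tg_p M$ to the layers $\{-m,\ldots,m\}$ together with its $\scale$-independence, both already established in the paragraph preceding the statement.

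First I would fix, for every chart $(U,\phi)$ of the symplectic $\scale$-smooth structure of $M$, the set $\Tg^* U \eqdef \pi_{\Tg^* M}^{-1}(U_1)$ together with the map
\[
	\Tg^*\phi \colon \Tg^* U \isomapsto \phi(U)_1 \times X_0^*, \qquad (p,\alpha) \mapsto \bigl(\phi(p),\, ((\D_p\phi)^{-1})^*\,\alpha\bigr),
\]
where $((\D_p\phi)^{-1})^* \colon (\Tg_p M)_0^* \isomapsto X_0^*$ is the adjoint of the topological isomorphism $(\D_p\phi)^{-1} \colon X_0 \to (\Tg_p M)_0$; note that $\phi(U)_1\times X_0^*$ is an open subset of the zeroth layer $X_1\times X_0^*$ of $X_{\geq 0}^1\times X^*$. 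Each $\Tg^*\phi$ is a bijection, and since $\Tg^* U\cap\Tg^* U' = \pi_{\Tg^* M}^{-1}\bigl((U\cap U')_1\bigr)$ we get $\Tg^*\phi(\Tg^* U\cap\Tg^* U') = \phi(U\cap U')_1\times X_0^*$, which is open in $\phi(U)_1\times X_0^*$ (openness of $\phi(U\cap U')_1$ inside $\phi(U)_1$ follows from continuity of $X_1\hookrightarrow X_0$, just as for $M_1$ in \thref{lem:filtration_sc_mfld}).

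The heart of the argument is the transition maps. Writing $g\eqdef \psi\circ\phi^{-1}$ for the transition map of $M$, which by hypothesis is an $\scale^\infty$-symplectomorphism of $(X,\omega)$, the chain rule $\D_p\psi = \D_{\phi(p)}g\circ\D_p\phi$ yields, after inverting, taking adjoints and cancelling the $\phi$-factors,
\begin{align*}
	\Tg^*\psi\circ(\Tg^*\phi)^{-1}\colon{}& \phi(U\cap U')^1\times X^* \longrightarrow \psi(U\cap U')^1\times X^*, \\
	& (x,\beta)\mapsto \bigl(g(x),\, ((\D_x g)^{-1})^*\,\beta\bigr).
\end{align*}
On the zeroth layer $((\D_x g)^{-1})^*$ is the plain adjoint of $(\D_x g)^{-1}\colon X_0\to X_0$; on the layer $m$ of $X^*$, i.e.\ on $X_{-m}^*$, it is the adjoint of $(\D_x g)^{-1}$ acting on $X_{-m}$, which is legitimate precisely because $\D_x g$ is a linear symplectomorphism of scales and therefore, by \thref{rmk:symplectic_scale_extension} and the discussion after it, extends to the layers $\{-m,\ldots,m\}$ --- the level-$0$ compatibility that makes this extension glue is exactly the symplecticity identity $\omega(\D_x g\cdot v,\D_x g\cdot w)=\omega(v,w)$. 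The decisive observation is that, up to precomposing with $g$ on the base, this fibrewise map is nothing but the map $(\D(g^{-1})^{\omega,\omega})^*$ obtained from the square \eqref{diag:sympl_adj_dfinv} in the proof of \thref{prop:chain_rule_strong_sck} (specialised to $\eta=\omega$ and $f=g$), and that proof already establishes that such a map is $\scale$-smooth. Writing $\Tg^*\psi\circ(\Tg^*\phi)^{-1}$ as a composite of $g$ (which is $\scale$-smooth on the shifted domain by \thref{prop:shift_scsmooth}), of that $\scale$-smooth fibre map, and of the $\scale$-smooth diagonal, product and projection maps of \thref{lem:prod_scsmooth} and \thref{lem:sc_properties}, the chain rule \thref{prop:chain_scsmooth} makes every transition map $\scale$-smooth. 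Hence $\{(\Tg^* U_a,\Tg^*\phi_a)\}_{a\in A}$ is an $\scale^\infty$-atlas in the sense of \thref{rmk:sc_mflds}\ref{rmk:sc_mflds_sets} with local model $X_{\geq 0}^1\times X^*$.

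It then remains to collect the routine consequences. The induced topology on $\Tg^* M$ is Hausdorff: two distinct points either have distinct images under $\pi_{\Tg^* M}$ in $M_1$ --- separated there by \thref{lem:filtration_sc_mfld}, and their preimages then separate the original points --- or they lie over the same $p\in M_1$, hence both inside the chart domain $\Tg^* U_a$ of any chart around $p$, which is homeomorphic to an open subset of the metrizable space $X_1\times X_0^*$. The induced filtration is read off from the local model: at layer $m$ the chart $\Tg^*\phi_a$ restricts to a bijection onto $\phi_a(U_a)_{m+1}\times X_{-m}^*$ and $((\D_p\phi_a)^{-1})^*$ restricts to an isomorphism $(\Tg_p M)_{-m}^*\isomapsto X_{-m}^*$, giving $(\Tg^* M)_m = \cup_{p\in M_{m+1}}\{p\}\times(\Tg_p M)_{-m}^*$ with $(\Tg_p M)_{-m}^*\subset(\Tg_p M)_0^*$ via the adjoint of the bonding map $(\Tg_p M)_0\hookrightarrow(\Tg_p M)_{-m}$. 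In a chart the projection $\pi_{\Tg^* M}$ becomes $\phi_a(U_a)^1\times X^*\mapsto\phi_a(U_a)^1$, which is $\scale$-smooth and, being layerwise surjective with layerwise surjective $\scale$-derivative, a submersion; its fibre over $p\in M_{m+1}$ is $\{p\}\times(\Tg_p M)_0^*\iso(\Tg_p M)_0^*$ carrying, by the computation of the filtration, the chain $(\Tg_p M)_{-m}^*\subset\cdots\subset(\Tg_p M)_0^*$, which is the non-negative part of the dual scale $\Tg_p^* M$. I expect the only real obstacle to be the middle step: realising that the fibrewise transition map makes sense on the negative layers at all --- this is where the symplectic compatibility of the atlas is unavoidable --- and identifying it with the already-analysed $\scale$-smooth map of \eqref{diag:sympl_adj_dfinv}; the remaining points simply duplicate the treatment of $\Tg M$.
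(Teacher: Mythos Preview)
Your approach is essentially identical to the paper's: define the charts $\Tg^*\phi$, compute the transition map as $(x,\beta)\mapsto(g(x),((\D_x g)^{-1})^*\beta)$, use symplecticity of $g=\psi\phi^{-1}$ to identify the fibrewise part with a symplectic-adjoint construction, and invoke the $\scale$-smoothness established around diagram~\eqref{diag:sympl_adj_dfinv}. You supply more detail than the paper on Hausdorffness, the filtration, and the submersion claim, all correctly.

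One small slip in the identification step: with the specialisation $f=g$ in \eqref{diag:sympl_adj_dfinv} followed by precomposition with $g$ on the base, you obtain at $x$ the operator $((\D_{g(x)}(g^{-1}))^{\omega,\omega})^* = (\D_x g)^*$, not $((\D_x g)^{-1})^*$. The fix is to specialise $f=g^{-1}$ instead (so $f^{-1}=g$), which yields $(\D g^{\omega,\omega})^*$ directly on $\phi(U\cap U')^1\times X^*$; since $g$ is symplectic, $(\D_x g)^{\omega,\omega}=(\D_x g)^{-1}$ and this is exactly the fibrewise transition. This is precisely what the paper does when it writes $(\D(\psi\phi^{-1})^{\omega,\omega})^*$ and appeals to ``a diagram similar to'' \eqref{diag:sympl_adj_dfinv}. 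With this correction your argument is complete.
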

\begin{proof}
	The methodology is similar to the construction of the tangent bundle. Letting $\pi: \Tg^* M \mapsto M_1$ be solely a map of sets in the first place, we define $\Tg^* U \eqdef \pi_{\Tg^* M}^{-1}(U_1)$ for $U\subset M$ open. A coordinate chart $(U, \phi)$ of $M$ induces a bijection $\Tg^* \phi: \Tg^* U \mapsto V_1 \times X_0^*,\, (p,\alpha) \mapsto (\phi(p), ((\D_p\phi)_0^{-1})^* \cdot \alpha)$. If $(U', \psi)$ is an additional coordinate chart we have, on the one hand, $(\D_x(\psi\phi^{-1})_0^{\omega,\omega})^* = (\D_x(\psi\phi^{-1})_0^{-1})^*: X_0^* \isomapsto X_0^*$ for all $x\in \phi(U\cap U')_1$ since $\psi\circ\phi^{-1}: \phi(U\cap U') \isomapsto \psi(U\cap U')$ is an $\scale^\infty$\hyp{}symplectomorphism. On the other hand, using a diagram similar to \eqref{diag:sympl_adj_dfinv}, we see that $(\D(\psi\phi^{-1})^{\omega,\omega})^*: \phi(U\cap U')^1 \times X^* \mapsto X^*$ is $\scale$\hyp{}smooth, where the operations are taken pointwise. Together, these conditions imply that the induced transition map $\Tg^* \psi \circ (\Tg^* \phi)^{-1}: \phi(U\cap U')^1 \times X^* \isomapsto \psi(U\cap U')^1 \times X^*$ preserves scales and is $\scale$\hyp{}smooth. The remaining statements are easily verified.
\end{proof}

\begin{remark} \thlabel{rem:symplectic_sc_bundleiso}
	The cotangent bundle $\Tg^* M$ allows us to globally rewrite the isomorphism $\iota_{\omega_p}$ in \eqref{eq:symplectic_omegap} by collecting $p\in M_{m+1}$, $m\in\setNz$. Indeed, we can define the $\scale^\infty$\hyp{}diffeomorphism $\iota_\omega: \Tg M \isomapsto \Tg^* M, \, (p, v) \mapsto (p, \iota_{\omega_p}(v))$. This diffeomorphism maps fibers of $\Tg M$ to fibers of $\Tg^* M$ and is linear on each fiber, or in other words, it is an isomorphism of $\scale$\hyp{}smooth vector bundles.
\end{remark}

\begin{definition} \thlabel{def:sections_sc_cotg_bundle}
	An $\scale^k$ section of the cotangent bundle of a symplectic $\scale^\infty$\hyp{}manifold $M$, $k\in\setNz\cup\{\infty\}$, is an $\scale^k$ map $s: M^1 \mapsto \Tg^* M$ with $\pi_{\Tg^* M}\circ s = \id_{M^1}$.
\end{definition}

The final coordinate\hyp{}free structure that we introduce on symplectic $\scale^\infty$\hyp{}manifolds are strongly densely\hyp{}defined $\scale^k$ maps. This is a simple generalization of the concept in Section~\ref{sec:symplectic_scales_hamflows} and its coordinate independence is a direct consequence of \thref{prop:chain_rule_strong_sck} and of the locality of the strong $\scale^k$ conditions. The proof of well\hyp{}definedness is a simple manipulation of the concepts introduced so far and will be omitted.
\begin{definition}
	Let $M$ be a symplectic $\scale^\infty$\hyp{}manifold locally modeled on a symplectic Banach scale $(X,\omega)$ on $\setZ$.
	\begin{enumerate}
		\item An $\scale^0$ map $h: M^1 \mapsto \setR$ is said to be strongly (densely\hyp{}defined) $\scale^k$, $k\in\setN\cup\{\infty\}$, if for all $p\in M_1$ there exists a coordinate chart $\phi: U(p) \isomapsto V \subset X_0$ of $M$ such that $h\circ \phi^{-1}: V^1 \mapsto \setR$ is strongly densely\hyp{}defined $\scale^k$.
		\item For a strongly $\scale^k$ map $h: M^1 \mapsto \setR$, we define at each $p\in M_{m+1}$, $m\in\setNz$, the $m\th$ level derivative of $h$ at $p$ to be $(\D_p h)_m \eqdef \D_{\phi(p)}(h\circ\phi^{-1}) \circ (\D_p \phi)_{-m} \in (\Tg_p M)_{-m}^*$.
		\item Varying $p$ above, we obtain the derivative of $h$: an $\scale^{k-1}$ section of the cotangent bundle $\D h: M^1 \mapsto \Tg^* M, \, M_{m+1} \owns p \mapsto (p, (\D_p h)_m) \in (\Tg^* M)_m$, $m\in\setNz$.
	\end{enumerate}
\end{definition}

The technical work carried out above drastically facilitates the task of generalizing Hamiltonian vector fields and flows to symplectic $\scale^\infty$\hyp{}manifolds. Indeed, this is now a question of seamlessly combining the toolkit developed in this document. First, for an $\scale^\infty$\hyp{}manifold $M$ (not compulsorily symplectic), we define an autonomous $\scale$\hyp{}smooth vector field $V: M^1 \mapsto \Tg M$ simply to be an $\scale$\hyp{}smooth section of the tangent bundle as in \thref{def:sections_sc_tg_bundle}.

As seen at the end of Section~\ref{sec:sc_manifolds}, $\scale$\hyp{}smooth maps $\varphi: \setR \times M \mapsto M$ define a partial derivative $\frac{\partial \varphi}{\partial t}(t,p) \in (\Tg_{\varphi(t,p)} M)_m$ for $t\in\setR$ and $p\in M_{m+1}$, $m\in\setNz$. Varying $t$ and $p$, we obtain an $\scale$\hyp{}smooth map
\begin{equation}
	\frac{\partial \varphi}{\partial t}: \setR \times M^1 \mapsto \Tg M, \, (t,p) \mapsto \Big(\varphi(t,p), \frac{\partial \varphi}{\partial t}(t,p) \Big) \,,
\end{equation}
which is locally the diagonal product of $\varphi$ in local coordinates and its partial derivative as a map of scales, as used in Section~\ref{sec:banach_scales_flows}. An $\scale$\hyp{}smooth vector field $V: M^1 \mapsto \Tg M$ is then said to have a global flow if there exists an $\scale$\hyp{}smooth map $\varphi: \setR \times M \mapsto M$ such that
\begin{align}
	\frac{\partial \varphi}{\partial t}(t,p) &= V \circ \varphi(t,p) & \varphi(0,p) &= p
\end{align}
for all $t\in\setR$ and $p\in M_{m+1}$, $m\in\setNz$. As in Section~\ref{sec:banach_scales_flows}, it is sufficient to check this condition for $m=0$, and the vector field can be recovered from $V = \frac{\partial \varphi}{\partial t}(0,\cdot)$.

For a symplectic $\scale^\infty$\hyp{}manifold $M$, a strongly $\scale$\hyp{}smooth map $h: M^1 \mapsto \setR$ gives rise to an $\scale$\hyp{}smooth vector field $V_h: M^1 \mapsto \Tg M$ uniquely defined by the relation
\begin{equation}
	-\D h = \iota_\omega \circ V_h : M^1 \mapsto \Tg^* M \,,
\end{equation}
since $\iota_\omega: \Tg M \isomapsto \Tg^* M$ is an $\scale$\hyp{}smooth vector bundle isomorphism (as described in \thref{rem:symplectic_sc_bundleiso}). Fiberwise, this equation reads $-(\D_p h)_m = \omega_p(\cdot, V_h(p))$ for $p\in M_{m+1}$, $m\in\setNz$. The definition of a Hamiltonian vector field and flow is now apparent.
\begin{definition}
	Let $M$ be a symplectic $\scale^\infty$\hyp{}manifold and let $\iota_\omega: \Tg M \isomapsto \Tg^* M$ be the induced $\scale$\hyp{}smooth vector bundle isomorphism. An $\scale$\hyp{}smooth vector field $V: M^1 \mapsto \Tg M$ is said to be Hamiltonian if there exists a strongly $\scale$\hyp{}smooth map $h: M^1 \mapsto \setR$ such that
	\begin{equation}
		-\D h = \iota_\omega \circ V : M^1 \mapsto \Tg^* M \,.
	\end{equation}
	If a Hamiltonian vector field has a global flow $\varphi: \setR \times M \mapsto M$, the flow is said to be Hamiltonian.
\end{definition}

\begin{example} \thlabel{ex:reduced_free_schrodinger}
	The flow of the free Schrödinger equation in \thref{ex:flow_free_schrodinger}, on the Banach scale $X = \{W^{2s,2}(S^1, \setC)\}_{s\in\setZ}$ (restricted to $\setNz$), descends to the projectivization $M = \projvect(X_0)$, and the descended flow map $\bar\varphi: \setR \times M \mapsto M$ is easily seen to be $\scale$\hyp{}smooth. We claim that this flow is Hamiltonian. In the first place, with the coordinates of \thref{ex:symplectic_projective_hilbert}, the vector field $\bar V = \frac{\partial \bar \varphi}{\partial t}(0,\cdot)$ is given by ${\bar V}_a \eqdef \pr_2^{\Tg B}\,\Tg(\psi_a)\,\bar V \,\psi_a^{-1}: B^1 \mapsto X, \, u \mapsto \i \sigma_a(u)$, $a\in\setZ$, where $\pr_2^{\Tg B}: B^1 \times X \mapsto X$ is the canonical projection and $\sigma_a: X^1 \mapsto X$ is the $\scale^0$ Fourier multiplier with coefficients
	\begin{equation}
		(\widehat{\sigma_a})_n = \begin{cases}
			a^2 - n^2		&\txt{if}~n<a \\
			a^2 - (n+1)^2	&\txt{if}~n\geq a \,.
		\end{cases}
	\end{equation}
	In the second place, we can define the Hamiltonian function
	\begin{equation}
		\bar h: M^1 \mapsto \setR, \, u \mapsto \frac{1}{2} \frac{\|u_x\|_0^2}{\|u\|_0^2} \,.
	\end{equation}
	This is a densely\hyp{}defined strongly $\scale$\hyp{}smooth map with derivative locally given by $\D({\bar h}_a): B^1 \mapsto X^*, \, x \mapsto - \langle \sigma_a(x), \cdot \rangle_0$, where ${\bar h}_a \eqdef \bar h\circ \psi_a^{-1}$. Finally, we conclude that if $\omega = \langle \i \cdot, \cdot \rangle_0$ is the standard symplectic structure on $X$, $\bar h$ generates $\bar V$.
\end{example}

\begin{remark}
	Analogously to the finite\hyp{}dimensional case, \thref{ex:reduced_free_schrodinger} can be interpreted in the trend of symplectic reduction. Consider the standard action of $S^1$ on $X_{\geq 0}$ given by pointwise multiplication. Defining the ($S^1$\hyp{}invariant) momentum map $\mu: X_{\geq 0} \mapsto \setR$
\begin{equation}
	\mu(x) = \frac{1}{2}(1-\|x\|_0^2) \,,
\end{equation}
we see that the action is free on $\mu^{-1}(0) = \sphere(X_0) \eqdef \{x \in X_0: \|x\|_0 = 1\}$, and that the projective Hilbert space is
\begin{equation}
	M \iso \faktor{\mu^{-1}(0)}{S^1}, \, [u] \mapsto \left[\frac{u}{\|u\|_0}\right] \,.
\end{equation}
The Hamiltonian $h: X_{\geq 0}^1 \mapsto \setR$ of \thref{ex:ham_free_schrodinger} is $S^1$\hyp{}invariant and descends to the Hamiltonian $\bar h: M^1 \mapsto \setR$ of \thref{ex:reduced_free_schrodinger} under this identification. Similarly, the flow $\varphi: \setR \times X_{\geq 0} \mapsto X_{\geq 0}$ of \thref{ex:flow_free_schrodinger} descends to $\bar\varphi: \setR \times M \mapsto M$ and, as seen in \thref{ex:reduced_free_schrodinger}, $\bar\varphi$ is generated by $\bar h$.
\end{remark}


\end{document}